\def\sectionautorefname~{\S}
\newtheorem{theorem}{Theorem}[section]
\newtheorem{definitio}[theorem]{Definition}
\newenvironment{definition}{\begin{definitio} \rm }{\end{definitio}}
\newtheorem{rem}[theorem]{Remark}
\newenvironment{remark}{\begin{rem} \rm }{\end{rem}}
\newtheorem{ex}[theorem]{Example}
\newenvironment{example}{\begin{ex} \rm }{\end{ex}}
\newtheorem{lemma}[theorem]{Lemma}
\newtheorem{proposition}[theorem]{Proposition}
\newtheorem{corollary}[theorem]{Corollary}
\newcommand{\F}{\mathbb{F}}
\newcommand{\Z}{\mathbb{Z}}
\newcommand{\Q}{\mathbb{Q}}
\newcommand{\R}{\mathbb{R}}
\newcommand{\C}{\mathbb{C}}
\newcommand{\rank}{\text{rank}}
\newcommand{\diag}{\operatorname{diag}}
\newcommand{\ind}{\operatorname{ind}}
\newcommand{\alg}{\operatorname{alg}}
\newcommand{\bsat}{\beta_{\operatorname{sat}}}
\newcommand{\Arf}{\operatorname{Arf}}
\newcommand{\im}{\operatorname{im}}
\newcommand{\id}{\operatorname{id}}
\newcommand{\lk}{\operatorname{lk}}
\newcommand{\hasse}{\varepsilon}
\def\transpose#1{{#1}^\top}
\newcommand{\std}{\operatorname{Std}}
\newcommand{\eFM}{\overset{\text{\sc fm}}\sim}
\newcommand{\Disc}{\operatorname{Disc}}
\definecolor{vert}{RGB}{0,205,0}
\title{Seifert forms and slice Euler characteristic of links}
\author[V. Florens]{Vincent Florens}
\address{Université de Pau, Pau, France (V.\,F.)}
\author[S.\,Yu. Orevkov]{Stepan Orevkov}
\address{IMT, l'universit\'e Paul Sabatier, Toulouse, France (S.\,O.)}
\address{Steklov Mathematical Institute, Moscow, Russia (S.\,O.)}
\date{}
\begin{document}

\begin{abstract}
We define the Witt coindex of a link with non-trivial Alexander polynomial, as a
concordance invariant from the Seifert form. We show that it provides an upper bound
for the (locally flat) slice Euler characteristic of the link, extending the work of Levine
on algebraically slice knots and Taylor on the genera of knots.
Then we extend the techniques by Levine on isometric structures and characterize
completely the forms of coindex $1$ under the condition that the symmetrized Seifert form
is non-degenerate.
We illustrate our results with examples where the coindex is used to show that
a two-component link does not bound a locally flat cylinder in the four-ball,
whereas any other known restriction does not show it.
\end{abstract}

\maketitle

\section{Introduction}
A knot in $S^3$ is (topologically) \emph{slice} if it bounds a locally flat disk in $B^4$.  
If $K$ is a slice knot then any Seifert form of $K$ is \emph{metabolic}
(it vanishes on a half-dimensional subspace). In this case $K$ is said \emph{algebraically slice}.
In \cite{Lev1, Lev2} Levine gave necessary and sufficient conditions
for a Seifert form of a knot to be metabolic. 
These are:

\begin{enumerate}
\item the Alexander polynomial has the form $f(t)f(t^{-1})$
      (the Fox-Milnor condition);
\item the Tristram-Levine signature function vanishes;
\item certain invariants depending on a prime number $p$ vanish for all $p$.
\end{enumerate}

Livingston \cite{Liv} has shown that it is enough to check (3) for a finite number
of primes only. It follows that there is an algorithm to check whether a given knot
is algebraically slice or not.

Our aim is to extend the work of Levine to links with non-trivial
 Alexander polynomial. It is convenient to formulate our results
 in terms of the \emph{coindex} $\beta(L)$ of an oriented link $L$ in
 $S^3$. We define $\beta(L)$ as the {\it Witt coindex}
 of any  Seifert form of $L$ (on the homology with rational coefficients),
 which is the codimension minus dimension of a maximal
 isotropic subspace (see \S\ref{sect.coind}).
 We show that it does not depend on the choice of the form.
 In the case of knots, the coindex coincides with
 an invariant introduced by Taylor \cite{Tay}.

 \subsection{Concordance and slice Euler characteristic}
 
 Two oriented links are {\it concordant} if there exists a properly embedded collection of
 locally flat
 $S^1 \times I$ in $S^3 \times I$, intersecting $S^3 \times 0$ and $S^3 \times 1$
 at the given links.
 We first observe that for links whose Alexander polynomial is not zero, the coindex is
 invariant by concordance:

 \begin{theorem} \label{main1}
 Let $L$ be a link in $S^3$ with $\Delta_L \neq 0$.
 If  $L'$ is concordant to $L$ then
 $$
                             \beta(L)=\beta(L').
 $$
 \end{theorem}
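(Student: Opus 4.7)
The plan is to deduce the equality of Witt coindex from a geometric concordance argument. Since $\beta(L)$ has already been shown to be independent of the Seifert surface chosen, I would pick any Seifert surfaces $F$ for $L$ and $F'$ for $L'$ and show $\beta(V_F) = \beta(V_{F'})$. The central construction: push $F$ slightly into $S^3 \times [0,\epsilon]$ and $F'$ into $S^3 \times [1-\epsilon,1]$, then glue them along the concordance annuli $C \subset S^3 \times I$ to obtain a closed oriented surface $\Sigma = F \cup C \cup F' \subset W := S^3 \times I$. A direct Mayer--Vietoris computation gives $H_1(\Sigma;\Q) = H_1(F;\Q) \oplus H_1(F';\Q)$, and the Seifert form of $\Sigma \subset W$ (defined by linking a positive normal push-off in $W$) decomposes as $V_F \oplus (-V_{F'})$: cross terms vanish because the push-offs at opposite ends lie in disjoint copies of $S^3$ inside $W$, and the sign flip on the $F'$-summand comes from the orientation reversal forced by $\partial\Sigma=\emptyset$.

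Next I would exploit a bounding $3$-chain. Since $H_2(W;\Q)=0$, the closed surface $\Sigma$ bounds a rational $3$-chain $M \subset W$. By half-lives-half-dies, $K := \ker\bigl(H_1(\Sigma;\Q) \to H_1(M;\Q)\bigr)$ has dimension $\tfrac12\dim H_1(\Sigma;\Q)$ and is Lagrangian for the intersection form $V_\Sigma - V_\Sigma^{\top}$. A standard push-off argument upgrades this to isotropy for $V_\Sigma$ itself: for $\alpha,\beta \in K$, the cycle $\beta$ bounds inside $M$ while at least one of the push-offs $\alpha^{\pm}$ can be arranged disjoint from $M$, so one of $\lk_W(\alpha^\pm,\beta)$ vanishes; combined with $\lk_W(\alpha^+,\beta)-\lk_W(\alpha^-,\beta) = \alpha\cdot_\Sigma \beta = 0$, both vanish. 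Hence $V_F \oplus (-V_{F'})$ admits a Lagrangian-sized isotropic subspace, i.e., has Witt coindex $0$.

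The remaining step is the algebraic statement: if $V \oplus (-V')$ has Witt coindex $0$, then $\beta(V)=\beta(V')$. This is where the assumption $\Delta_L \neq 0$ plays its role; it is preserved by concordance (so $\Delta_{L'}\neq 0$ too) and forces the needed non-degeneracy of the Seifert forms. Given a maximal $V'$-isotropic $I' \subset H_1(F';\Q)$, I would intersect the Lagrangian $W$ of $V \oplus (-V')$ with $H_1(F;\Q) \oplus I'$ and project to the first factor. A dimension count, combined with the non-degeneracy coming from $\Delta_L \neq 0$, should yield a $V_F$-isotropic of dimension at least $\tfrac12\bigl(\dim H_1(F;\Q) - \beta(V_{F'})\bigr)$, giving $\beta(V_F)\leq \beta(V_{F'})$; the symmetric inequality then forces equality.

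The main obstacle is this final algebraic step. For non-degenerate symmetric forms it would follow at once from Witt cancellation, but Seifert forms are neither symmetric nor anti-symmetric and, for multi-component links, typically have non-trivial kernel. The projection and dimension count above must therefore be controlled carefully, using the precise content of $\Delta_L \neq 0$ to understand how the Lagrangian $W$ sits with respect to the radicals of $V_F$ and $V_{F'}$.
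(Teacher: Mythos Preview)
Your geometric strategy is the paper's: glue the Seifert surfaces along the concordance to a closed surface $\Sigma$, bound it by a $3$-manifold $M$, use half-lives-half-dies plus a push-off to exhibit a half-dimensional subspace of $H_1(S)\oplus H_1(S')$ on which $\theta\oplus(-\theta')$ vanishes, then deduce $\beta(A)=\beta(A')$ algebraically. Two corrections are needed before that last step. The Mayer--Vietoris isomorphism $H_1(\Sigma;\Q)\cong H_1(F;\Q)\oplus H_1(F';\Q)$ is false in general: for the Hopf link with annular Seifert surfaces and the product concordance, $\Sigma$ is a torus and the inclusion-induced map $H_1(F)\oplus H_1(F')\to H_1(\Sigma)$ has rank~$1$. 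The paper never claims this isomorphism; it works with the map $j_*\colon H_1(S\sqcup S')\to H_1(\Sigma)$ and the crude bound $\dim\ker(i_*j_*)\ge\dim H_1(S\sqcup S')-\dim\operatorname{im}i_*$, which together with $\dim\operatorname{im}i_*=\tfrac12\dim H_1(\Sigma)=\tfrac12\dim H_1(S\sqcup S')$ already yields a half-dimensional isotropic $K\subset H_1(S)\oplus H_1(S')$. Also, ``$H_2(W;\Q)=0$ so $\Sigma$ bounds a rational $3$-chain'' is not what you need: half-lives-half-dies and the push-off require an embedded compact oriented $3$-manifold, produced by a Pontryagin--Thom argument with topological transversality (cf.\ Lemma~\ref{cobound2}).

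The substantive gap is the one you flag. Your projection sketch does not close: intersecting the metabolizer $W$ with $H_1(F;\Q)\oplus I'$ gains at least $\operatorname{ind}A'$ over the trivial dimension bound, but the kernel of the projection to $H_1(F;\Q)$ is $W\cap(0\oplus I')$, bounded above only by $\dim\bigl(W\cap(0\oplus H_1(F';\Q))\bigr)\le\operatorname{ind}A'$, and the two cancel, leaving only $\operatorname{ind}A\ge\tfrac12(\dim H_1(F)-\dim H_1(F'))$. The paper proves the needed cancellation as Lemma~\ref{witt}: if $B$ is admissible and metabolic of size $2k$ then $\operatorname{ind}(A\oplus B)=\operatorname{ind}A+k$. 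The hypothesis $\Delta\ne 0$ enters precisely here, and not via radicals as you surmise: since $\Delta_B\ne 0$, one may pick $\lambda\in\F$ with $B^\lambda:=B+\lambda\transpose{B}$ nonsingular, and the nonsingularity of $B^\lambda$ is what drives the rank estimate after row-reducing a basis of a maximal isotropic for $A\oplus B$ against the decomposition $V\oplus Y\oplus Z$ (with $Y$ the metabolizer of $B$). Applying this lemma to both sides of $A\oplus(-A'\oplus A')=(A\oplus{-A'})\oplus A'$ then gives $\beta(A)=\beta(A')$ (Proposition~\ref{algconc}).
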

 Note that the property $\Delta_L\ne0$ is invariant by concordance \cite{Kaw, NP};
  see Proposition~\ref{inv.Delta=0}.
 
The \emph{slice Euler characteristic} is defined as
$\chi_s(L)= \max_{F} \chi(F),$
where the maximum 
is taken over all locally flat oriented surfaces with boundary $L$,
without closed components in $B^4$ and transverse to $S^3$. 
We show that the coindex provides an upper bound for $\chi_s$.

\begin{theorem} \label{main2}
Let $L$ be a link in $S^3$. 
Then the following inequality holds
$$ \chi_s(L) \leq 1 - \beta(L).$$
\end{theorem}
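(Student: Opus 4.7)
The plan is to produce, for an arbitrary locally flat oriented surface $F \subset B^4$ with $\partial F = L$ and no closed components, a $V$-isotropic subspace $G$ of $H_1(\Sigma;\Q)$ (where $V$ is the Seifert form of $L$ on a chosen Seifert surface $\Sigma$) whose dimension is large enough that the Witt coindex bound
$$
\beta(L) \leq \dim H_1(\Sigma;\Q) - 2\dim G
$$
forces $\beta(L) \leq 1 - \chi(F)$. Taking the supremum over all admissible $F$ then gives the theorem.

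First I would fix a connected Seifert surface $\Sigma$ for $L$ in $S^3$, with Seifert form $V$ on $H_1(\Sigma;\Q)$, and note that $\dim H_1(\Sigma;\Q) = 2g(\Sigma) + |L| - 1$. After pushing $F$ slightly into the interior of $B^4$, gluing to $\Sigma$ along $L$ yields a closed connected oriented surface $C = \Sigma \cup_L F \subset B^4$. Euler-characteristic additivity gives $\chi(C) = \chi(\Sigma) + \chi(F) = 2 - 2g(\Sigma) - |L| + \chi(F)$, so that $C$ has genus
$$
h = g(\Sigma) + \tfrac{1}{2}(|L| - \chi(F)).
$$

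Next I would invoke a half-lives--half-dies principle for the closed surface $C$ sitting inside the double $S^4 = B^4 \cup_{S^3} B^4$ (where $C$ is automatically null-homologous): this produces a Lagrangian $G_C \subset H_1(C;\Q)$ of dimension $h$ for the intersection form on $C$, whose elements are represented by cycles bounding $2$-chains in $B^4 \setminus C$. Let $\pi: H_1(\Sigma;\Q) \to H_1(C;\Q)$ be the inclusion-induced map and set $G := \pi^{-1}(G_C)$. The standard preimage dimension inequality gives
$$
\dim G \geq \dim H_1(\Sigma;\Q) + \dim G_C - \dim H_1(C;\Q) = \dim H_1(\Sigma;\Q) - h,
$$
so once $V|_G \equiv 0$ is known one obtains
$$
\beta(L) \leq \dim H_1(\Sigma;\Q) - 2\dim G \leq 2h - \dim H_1(\Sigma;\Q) = 1 - \chi(F),
$$
which is the desired estimate.

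The hard part is proving the isotropy $V|_G \equiv 0$. For $a,b \in G$, the push-off $b^+$ of $b$ into $S^3 \setminus \Sigma$ represents the class $\pi(b) \in G_C$ up to a correction that is null-homologous in $B^4 \setminus C$, and hence bounds a $2$-chain there; one then realizes $V(a,b) = \lk_{S^3}(a, b^+)$ as an intersection pairing in $B^4$ that vanishes by a general-position argument against this bounding chain, together with the fact that $a$ itself lies on $C$. This is the natural generalization of Levine's argument that slice knots have metabolic Seifert form, and the principal new difficulties in the link setting are the multi-component meridian structure of $L$ and $C$ (which makes the bookkeeping of linking numbers more subtle) and the need to accommodate possibly degenerate Seifert forms. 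I expect this step to be the main technical content of the proof.
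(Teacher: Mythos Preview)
Your overall strategy and dimension bookkeeping match the paper's exactly: glue $F$ to a Seifert surface $\Sigma$ to form a closed surface $C$, produce a half-dimensional subspace $G_C\subset H_1(C;\Q)$, pull it back to $G\subset H_1(\Sigma;\Q)$, and show the Seifert form vanishes on $G$. The difference is in how $G_C$ is produced and how isotropy is verified, and here your sketch has a gap.

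The ``half-lives--half-dies principle for $C\subset S^4$'' that you invoke is not a standalone fact about surfaces in $4$-manifolds; the way one obtains a distinguished Lagrangian in $H_1(C;\Q)$ is by finding a compact oriented $3$-manifold $M\subset B^4$ with $\partial M = C$ (this is the paper's Lemma~\ref{cobound2}, proved via topological transversality) and setting $G_C=\ker\big(H_1(C;\Q)\to H_1(M;\Q)\big)$. The half-lives--half-dies lemma that applies is the classical one for the pair $(M,\partial M)$, i.e.\ the paper's Lemma~\ref{lagr}. Crucially, elements of $G_C$ then bound $2$-chains \emph{in $M$}, not ``in $B^4\setminus C$'' as you write.

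This distinction is precisely what makes the isotropy argument work, and it is much easier than you anticipate. For $a,b\in G$, choose $2$-chains $\alpha,\beta\subset M$ with $\partial\alpha=a$, $\partial\beta=b$, and push $\beta$ off $M$ in the normal direction in $B^4$ to get $\beta_+$. Then $\theta(a,b)=\alpha\cdot\beta_+=0$ because $\alpha\subset M$ and $\beta_+\subset B^4\setminus M$ are disjoint. Your version, with the bounding chain in $B^4\setminus C$ and $a$ on $C$, does not force vanishing: a $2$-chain with boundary $a$ has no reason to stay on $C$, and two $2$-chains whose interiors avoid $C$ can perfectly well intersect each other. The $3$-manifold $M$ is what separates them. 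Once $M$ is in hand, there is no further difficulty with multi-component links or degenerate Seifert forms; the ``hard part'' you flagged dissolves.
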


Closely related to the slice Euler characteristic is the \emph{slice genus}
of $L$, defined as $g_s(L)=\min_{F}\text{genus}(F)$.
Since $2g_s(L)\ge 2-n-\chi_s(L)$ where $n$ is the number
of components of $L$, Theorem \ref{main2} provides also a lower bound for $g_s$
(note that this inequality between $g_s$ and $\chi_s$ is not always sharp;
see the end of \S\ref{sect.arf}).
In the case of a knot $K$,
Theorem~\ref{main2} reduces to the estimation
$2g_s(K)\le\beta(K)$ proved by Taylor \cite{Tay}.

 We observe  that Theorem \ref{main2} implies the Murasugi-Tristram inequality
\cite{Mu,Tr}, see Corollary \ref{MT}. In the case  $\chi_s(L)=1$,
the result implies also the Fox-Milnor theorem for the Alexander polynomial
of links \cite{FM,Flo}, see Corollary \ref{FM} (more precisely, the
\emph{one-variable} version). In particular, the Alexander polynomial of a slice knot
has the form $f(t)f(t^{-1})$. 
We illustrate in Section \ref{sect.examples} that Theorem \ref{main2} may 
 offer better estimates of $\chi_s(L)$ than the Murasugi-Tristram inequality and
 the Fox-Milnor Theorem. We present in particular (see Example \ref{ex.ar})
 two-component links with vanishing Arf-Robertello invariant \cite{Rob}
 where the coindex is used to show that the links do not bound a cylinder in $B^4$.

Let us mention that Taylor \cite{Tay} observed
that a Seifert form is realizable by a knot of slice genus $g$ if and only if
its coindex is less or equal to $2g$. This motivates naturally the definition 
of the {\it algebraic slice genus}
of a knot $K$ as $g_s^{\alg} = \frac12\beta(K)$.
In particular, $K$ algebraically slice corresponds to $g_s^{\alg}(K)=0$.
In view of Theorem~\ref{main2}, one might naturally define the 
 \emph{algebraic slice Euler characteristic} as $1 - \beta(L)$. However,
the condition that the coindex of a Seifert form of a link is $1-\chi$
does not imply that the form is realizable by a link $L$ with $\chi_s(L)=\chi$.
For this reason, we do not know for the moment what should be the appropriate
notion of algebraic slice Euler characteristic. See \S\ref{sect.slice.chi}.


\subsection{Algebraic concordance and isometric structures}

In order to apply Theorem \ref{main2} one needs to estimate $\beta(L)$.
We do not know how to compute $\beta(L)$ in general. However,
by Levine \cite{Lev1} (and \cite{Liv}), there is an algorithm to decide whether $\beta(L)=0$
(see above). We extend algebraic concordance techniques of Levine on
isometric structures and decide algorithmically whether $\beta(L)=1$ 
(in the case $\Delta_L(-1)\ne 0$). 

In \cite{Lev1,Lev2}, Levine considers integral matrices of even size with
$\det(A - \transpose{A})=\pm1$ and defines the group $\mathcal{G}^\Z$ of
their {concordance} classes.
The construction of $\mathcal{G}^\Z$ works as well for rational matrices and $\mathcal{G}^\Z$
is in fact a subgroup of the rational algebraic concordance group, denoted $\mathcal{G}^\Q$. 
The strategy to study $\mathcal{G}^\Q$ is then to consider \emph{isometric structures}.
These are triple $(V,b,T)$ where $b$ is a nonsingular symmetric bilinear form on a finite
dimensional space $V$ and $T$ is an isometry with respect to $b$, with some hypothesis on the
characteristic polynomial of $T$.
The Witt group $\mathcal{G}_\Q$ of (concordance) classes of isometric
structures is then shown to be isomorphic to $\mathcal{G}^\Q$, see \cite{Lev2}.

We extend the notion of concordance to matrices of arbitrary size and non-trivial Alexander
polynomial and show that the coindex is an algebraic concordance invariant.
In order to characterize matrices with coindex $1$, we extend also the notion of
isometric structures and study structures corresponding to forms of coindex $1$. 
Then we show that any Seifert form of a link $L$ with $\Delta_L(-1) \neq 0$ has coindex $1$
if and only if it is concordant to a very special one determined by $\Delta_L$. See
Definition \ref{def.standard} and Theorem \ref{coindex1} for a precise statement.

Finally, one may recall that there is a group homomorphism
from the concordance group of knots to the algebraic concordance group
$\mathcal{G}^\Z$, sending (the class of) a knot to the concordance class of a Seifert
matrix. This homomorphism is not injective in dimension
$3$ (see \cite{CG1,CG2}) but it is an isomorphism in higher odd dimensions (see \cite{Ker}).
 We do not know if the coindex may characterize completely the existence of bounding
 manifolds with prescribed Euler characteristic.

Moreover Theorem~\ref{main2} should be extended to Seifert matrices whose Alexander
polynomial is zero. This is the object of a future work, as well as the generalization
to Seifert forms of colored links \cite{CF}.

In \S\ref{wittgroups}, we define the concordance of admissible matrices and
give basic properties of the coindex. In \S\ref{isom} we introduce isometric structures.
\S\ref{sect.coind=1} is devoted to the main result concerning the forms of coindex 1.
In \S\ref{linkinv} we relate the topological and algebraic invariants discussed in the
paper. We conclude with \S\ref{sect.examples} where we illustrate our results by several
examples and discuss some related topics.


\section{Algebraic concordance and coindex} \label{wittgroups}

Let $\F$ be a field of characteristic zero (in fact, we need only the case when it is
the field of rational, real, or $p$-adic rational numbers: $\Q$, $\R$, or $\Q_p$).
In this section, we consider square matrices with entries in $\F$. 


\subsection{Normalized Alexander polynomial}\label{sect.alex}

Given a square matrix $A$, we define its normalized Alexander polynomial
(which we shall call just {\it Alexander polynomial\/}) as
$$
    \Delta_A(t) = t^{-k} \det(A - t\transpose{A}),
    \qquad\text{$k$ is such that $\Delta_A\in\F[t]$ and $\Delta_A(0)\ne 0$}
$$
(in particular, $\Delta_A(t) = \det(A - t\transpose{A})$ when $A$ is non-singular).
It is clear that
\begin{equation}\label{sym.Delta}
     \Delta_A(t) = (-t)^n \Delta_A(t^{-1}), \qquad n=\deg\Delta_A.
\end{equation}

One readily checks that $\Delta_A(t)$ is invariant under the S-equivalence, and then
the Alexander polynomial $\Delta_L(t)$ of a link $L$ is a well-defined invariant
(not up to multiplication by $c t^n$ for $c \in \F^*$). It can be equivalently defined as
$\Delta_L(t) = t^{n/2\,}\nabla_L(t^{1/2} - t^{-1/2})$ with $n=\operatorname{span}\nabla_L$, 
where $\nabla_L(z)$ is the Conway polynomial defined by
$\nabla_{L_+}(z)-\nabla_{L_-}(z)=z\nabla_{L_0}(z)$ and $\nabla_O(z)=1$.

Our choice of the sign of $\nabla_L$ corresponds to that in
\cite{Kau} and \cite[\S8.2]{KawBook} but it is opposite to the choice in
\cite[\S5.4]{KawBook} and \cite{Lic}
in the case of even-component links.


 \subsection{The group $\bar{\mathcal{G}}^\F$}\label{sect.witt.index}
 
\begin{definition}
A matrix $A$  is \emph{admissible} if its Alexander polynomial is non-zero.
\end{definition}

In particular, any nonsingular matrix is admissible.
Two matrices $A$ and $A'$ are \emph{congruent} if there exists
a matrix $P$ with $\det P \neq 0$ such that $\transpose{P}\! A P= A'$.

With any square matrix $A$ we associate (in the standard way) a bilinear
form on an $\F$-vector space $V$ whose dimension equals the size of $A$.

\begin{definition}
A matrix $A$ of size $2k$   
is \emph{metabolic} if there exists a $k$-dimensional subspace $U$ of $\F^{2k}$,
totally isotropic for $A$, i.e., $\transpose{x}\! A y=0$ for all $x,y \in U$. 
Equivalently $A$ is congruent to a matrix of the form
$$
  \begin{pmatrix} 0 & B \\
                  C & D
  \end{pmatrix},
$$
where $B,C$ and $D$ are $k\times k$ matrices and $0$ is the zero matrix.
Two matrices $A_1$ and $A_2$ are \emph{concordant} if the orthogonal sum
$A_1 \oplus -A_2$ is metabolic. 
\end{definition}

The \emph{Witt index} of  $A$ is defined as the maximal dimension of a totally
isotropic subspace for $A$.%
\footnote{This is already a standard term for symmetric non-singular forms.
However, it is used very rarely for degenerate or non-symmetric forms,
and the definitions vary in these cases.}
We denote this index by $\ind (A)$ or $\ind_\F(A)$.
Note that if $\det A \neq 0$ and $A$ has size $n$,
then $\dim U \leq n/2$ for any totally isotropic subspace $U$.

\begin{proposition} \label{equiv}
The concordance of admissible matrices is an equivalence relation.
Congruent matrices are concordant.
\end{proposition}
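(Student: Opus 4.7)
The plan is to check reflexivity, symmetry, and transitivity of concordance on admissible matrices and, separately, to verify that congruence implies concordance. In each of the three easy cases I would exhibit an explicit Lagrangian. For reflexivity, the diagonal $\{(x,x):x\in\F^n\}\subset \F^{2n}$ is an $n$-dimensional totally isotropic subspace of $A\oplus -A$, since $\transpose{x}Ay-\transpose{x}Ay=0$. For symmetry, conjugation by the block-swap permutation realizes a congruence between $A_1\oplus -A_2$ and $-A_2\oplus A_1 = -(A_2\oplus -A_1)$, and metabolicity is preserved by both congruence and negation. For the congruence statement, if $A'=\transpose{P}AP$ with $P$ invertible, the graph $\{(Px,x):x\in\F^n\}$ is $n$-dimensional and is isotropic for $A\oplus -A'$ because $\transpose{(Px)}A(Py)-\transpose{x}A'y = \transpose{x}(\transpose{P}AP-A')y = 0$.

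The main obstacle is transitivity. Suppose $A_1,A_2,A_3$ are admissible with Lagrangians $U\subset V_1\oplus V_2$ for $A_1\oplus -A_2$ and $W\subset V_2\oplus V_3$ for $A_2\oplus -A_3$, writing $V_i=\F^{n_i}$. I would construct a candidate Lagrangian via the fiber product $X=\{(x,y,z):(x,y)\in U,\,(y,z)\in W\}$ and take $L=\pi_{13}(X)\subset V_1\oplus V_3$. Isotropy of $L$ for $A_1\oplus -A_3$ is immediate from the identities $\transpose{x}A_1x'=\transpose{y}A_2y'=\transpose{z}A_3z'$ implied by isotropy of $U$ and $W$ with common $y,y'\in V_2$. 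A direct dimension count (writing $X=(U\oplus V_3)\cap(V_1\oplus W)$) yields
\begin{equation*}
\dim L = \tfrac{1}{2}(n_1+n_3) + n_2 - \dim(\pi_2 U + \pi_1 W) - \dim(U_0\cap W_0),
\end{equation*}
where $U_0=\{y\in V_2:(0,y)\in U\}$ and $W_0=\{y\in V_2:(y,0)\in W\}$; both are totally isotropic in $V_2$ for $A_2$ by isotropy of $U$ and $W$.

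What remains is the inequality $\dim L\geq\tfrac{1}{2}(n_1+n_3)$, equivalently $\dim(\pi_2U+\pi_1W)+\dim(U_0\cap W_0)\leq n_2$. This is where the admissibility of $A_2$ is essential: $\Delta_{A_2}\neq 0$ forces $\ker A_2\cap\ker\transpose{A_2}=0$, and hence $\rk A_2\geq n_2/2$. Combined with the orthogonality of $U_0$ (resp.\ $W_0$) to $\pi_2U$ (resp.\ $\pi_1W$) under $A_2$, coming directly from isotropy of $U$ and $W$, this non-degeneracy should force the required bound. Once $L$ is known to contain an isotropic subspace of dimension $\tfrac12(n_1+n_3)$ in $V_1\oplus V_3$, the matrix $A_1\oplus -A_3$ is metabolic and transitivity follows. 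An alternative strategy, which I would keep in reserve, is to first reduce to the non-singular case by showing that every admissible matrix is concordant to a non-singular one, and then invoke classical Witt cancellation.
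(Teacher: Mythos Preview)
Your treatment of reflexivity, symmetry, and the congruence statement is correct and matches the paper's. For transitivity, the paper takes a different route: it proves a Witt-type cancellation lemma (Lemma~\ref{witt}) stating that if $B$ is admissible and metabolic of size $2k$ then $\ind(A\oplus B)=\ind(A)+k$, and then applies it with $B=-A_2\oplus A_2$ to deduce that $A_1\oplus -A_3$ is metabolic from the metabolicity of $(A_1\oplus -A_3)\oplus(-A_2\oplus A_2)$. Your direct ``composition of Lagrangian correspondences'' argument is a genuine alternative and is cleaner conceptually; it avoids the auxiliary index lemma and gives an explicit Lagrangian.

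There is, however, a gap in the last step. The consequences you extract from admissibility, namely $\ker A_2\cap\ker\transpose{A_2}=0$ and $\rk A_2\ge n_2/2$, are true but insufficient for the bound $\dim(\pi_2U+\pi_1W)+\dim(U_0\cap W_0)\le n_2$. For instance, the $3\times3$ nilpotent Jordan block $N$ satisfies $\ker N\cap\ker\transpose{N}=0$, yet with $S=Z=\langle e_1,e_3\rangle$ one has $Z$ two-sidedly $N$-orthogonal to $S$ while $\dim S+\dim Z=4>3$; and indeed $\Delta_N=0$. The correct input from admissibility is stronger: since $\Delta_{A_2}(t)=t^{-k}\det(A_2-t\transpose{A_2})$ is not the zero polynomial, there exists $\lambda\in\F$ with $A_2^\lambda:=A_2+\lambda\transpose{A_2}$ \emph{non-singular}. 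Now observe that $Z=U_0\cap W_0$ is orthogonal to $S=\pi_2U+\pi_1W$ with respect to both $A_2$ and $\transpose{A_2}$ (your isotropy computation gives both $\transpose{z}A_2s=0$ and $\transpose{s}A_2z=0$), hence with respect to $A_2^\lambda$. Since $A_2^\lambda$ is non-singular, $\dim Z\le n_2-\dim S$, which is exactly the required inequality. This is precisely the mechanism the paper uses inside the proof of Lemma~\ref{witt}; once you plug it in, your fiber-product argument goes through. Your reserve strategy (reduce to non-singular $A_2$ first) is delicate here, since Lemma~\ref{lem.nonsing} needs the extra hypothesis $\Delta_{A_2}(-1)\ne0$.
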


The symmetry is obvious. The concordance of congruent matrices
(in particular, the reflexivity):
$\transpose{P}\!\left(\smallmatrix\transpose{Q}\!\!AQ&0\\0&-A\endsmallmatrix\right)P
 =\left(\smallmatrix 0&*\\ *&*\endsmallmatrix\right)$ for
$P=\left(\smallmatrix I&I\\Q&0\endsmallmatrix\right)$.
The transitivity follows from Lemma~\ref{witt}, which is a generalization of the
Witt cancellation lemma (cf.~\cite[\S3]{Lev2}).
 
\begin{remark}
The concordance is no longer transitive if we omit the requirement that the
matrices are admissible. Indeed, according to our definition,
any matrix is concordant to the zero matrix of the same size.
It would be interesting to find
an adequate notion of concordance, relevant to Seifert matrices of
arbitrary links.
\end{remark}

Equivalence classes of admissible matrices form an abelian group under orthogonal sum,
denoted by $\bar{\mathcal{G}}^\F$, with subgroup $\mathcal{G}^\F$ the set of concordance
classes of matrices of even size introduced by Levine. The metabolic class constitutes
the identity element, and the inverse of the class of $A$ is the class of $-A$.

\begin{lemma} \label{witt}
Let $A$ and $B$ be square matrices.
Suppose that $B$ is admissible and metabolic of size $2k$.
Then
$$
    \ind \big( A \oplus B \big)= \ind (A) + k.
$$
In particular, if both $B$ and $A \oplus B$ are metabolic, then $A$ is metabolic.
\end{lemma}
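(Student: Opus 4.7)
The inequality $\ind(A\oplus B)\ge \ind(A)+k$ is immediate: orthogonally summing a maximal totally isotropic subspace for $A$ with a $k$-dimensional Lagrangian of $B$ produces a totally isotropic subspace for $A\oplus B$ of dimension $\ind(A)+k$. The ``in particular'' assertion will follow at once from the equality, since metabolicity of a form of size $2m$ is exactly the condition $\ind=m$, so once $\ind(A\oplus B)=\ind(A)+k$ is known, $\ind(A)$ will equal half the size of $A$.

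The content of the lemma is therefore the reverse inequality $\ind(A\oplus B)\le \ind(A)+k$, and the plan is to prove it by induction on $k$. The crucial input from admissibility is that the two-sided radical $\ker B\cap\ker B^{\top}$ of $B$ is trivial: any $u$ in this radical would satisfy $(B-tB^{\top})u\equiv 0$ in $\F[t]$, contradicting $\det(B-tB^{\top})\not\equiv 0$.

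For the base case $k=1$, let $Z$ be a totally isotropic subspace for $A\oplus B$ in $V\oplus W$ and $q\colon V\oplus W\to W$ the projection. Since $\ker q|_Z=Z\cap V$ is totally isotropic for $A$, the only case to handle is $\dim q(Z)=2$. Choosing lifts $z_i=v_i+w_i\in Z$ with $(w_1,w_2)$ a basis of $W$, the isotropy relations $z_i^{\top}(A\oplus B)z_j=0$ identify the Gram matrix of $A$ on $\operatorname{span}(v_1,v_2)$ with $-B$ expressed in the basis $(w_1,w_2)$. Admissibility of $B$ forces $v_1$ and $v_2$ to be linearly independent: a dependence $v_2=cv_1$ would force the Gram matrix of $B$ in $(w_1,w_2)$ to vanish, i.e.\ $B=0$, contradicting $\Delta_B\ne 0$. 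The same relations show that $v_1$ is $A$-orthogonal on both sides to $Z\cap V$ and is self-isotropic, so $\operatorname{span}(v_1)+(Z\cap V)$ is strictly larger than $Z\cap V$ and totally isotropic for $A$, forcing $\dim(Z\cap V)\le\ind(A)-1$ and hence $\dim Z\le\ind(A)+1$.

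For the inductive step $k\ge 2$, the plan is to produce an orthogonal decomposition $B\cong B_0\oplus B'$ with $B_0$ admissible metabolic of size $2$ and $B'$ admissible metabolic of size $2(k-1)$; combining the base case applied to the pair $(A,B_0)$ with the inductive hypothesis applied to the pair $(A\oplus B_0,B')$ will then give
\[
\ind(A\oplus B)=\ind(A\oplus B_0)+(k-1)=\ind(A)+k.
\]
The decomposition will be obtained by choosing $u\in U_0\setminus 0$ (for $U_0$ a $k$-dimensional Lagrangian of $B$) and a companion $v\in W$ such that the $2$-dimensional subspace $W_0=\operatorname{span}(u,v)$ splits off cleanly and $U_0\cap W_0^{\perp_B}$ remains a $(k-1)$-dimensional Lagrangian of $B|_{W_0^{\perp_B}}$. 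I expect the main obstacle to be controlling this splitting when the restriction $B|_{W_0}$ is itself degenerate, which can occur even for admissible $B$; the cleanest way around this is to carry out the extraction over $\F(t)$, where $\widetilde B=B-tB^{\top}$ is non-degenerate and the classical hyperbolic-pair argument for non-degenerate (possibly non-symmetric) forms applies, and then to transfer the conclusion back to $\F$ via the identity $\ind_\F(M)=\ind_{\F(t)}(M-tM^{\top})$ obtained by a specialisation argument at $t=0$ that clears denominators and adjusts a basis of the isotropic subspace to preserve linear independence at the specialisation.
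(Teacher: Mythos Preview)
The inductive step has a genuine gap: the orthogonal decomposition $B\cong B_0\oplus B'$ you need does not exist in general, and passing to $\F(t)$ does not create it. When $B$ is invertible, any such splitting forces the $2$-dimensional summand to be invariant under $T=B^{-1}\transpose{B}$ (because the two-sided orthogonal complement condition and the relation $\eta(x,y)=\eta(y,Tx)$ confine $TW_0$ to $W_0$). But $T$ need not have a $2$-dimensional invariant subspace. Take $B=\left(\begin{smallmatrix}0&I_k\\ C&0\end{smallmatrix}\right)$ with $C$ a $k\times k$ matrix whose characteristic polynomial is irreducible over $\F$ and $k\ge 3$: then $B$ is admissible (its Alexander polynomial is $\pm\det(C-tI)\det(I-tC)\ne 0$) and metabolic (the first $k$ coordinates form a Lagrangian), yet $T=\left(\begin{smallmatrix}C^{-1}&0\\ 0&\transpose{C}\end{smallmatrix}\right)$ has only invariant subspaces of dimensions $0$, $k$, $2k$. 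Over $\F(s)$ the characteristic polynomial of $C$ stays irreducible, and since $(C-sI)^{-1}(I-sC)$ is a rational function of $C$ with the same invariant subspaces, $\widetilde T=\widetilde B^{-1}\transpose{\widetilde B}$ inherits the same obstruction; so the hyperbolic-pair extraction for the non-symmetric form $\widetilde B$ cannot produce a two-sided orthogonal summand either. Your identity $\ind_\F(M)=\ind_{\F(t)}(M-t\transpose{M})$ is correct and pleasant, but it only reduces the lemma over $\F$ to the same lemma over $\F(t)$ for a non-degenerate $\widetilde B$, which you have not proved.

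Your base case is morally right but needs two repairs. First, $v_1^\top A v_1=-w_1^\top B w_1$ need not vanish for an arbitrary lift; you must choose $w_1$ in the Lagrangian of $B$ (possible since $q(Z)=W$). Second, a dependence among $v_1,v_2$ does not force $B=0$; it forces the Gram matrix to be symmetric of rank $\le 1$, which still kills $\Delta_B$ but for a different reason, and in any case linear independence of $v_1,v_2$ is not what you need---you need $v_1\notin Z\cap V$, which follows because $v_1\in Z$ would put $w_1$ in the two-sided radical of $B$. The paper's proof bypasses all of this with a direct, non-inductive argument: split $W=Y\oplus Z$ with $Y$ a Lagrangian, row-reduce a basis of the maximal isotropic $U\subset V\oplus W$ along this splitting, and use admissibility once---picking a single $\lambda\in\F$ with $B+\lambda\transpose{B}$ nonsingular---to turn an orthogonality pattern into a rank inequality giving $s\ge n-k$.
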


\begin{proof}
In essential, the proof follows that of \cite[Lemma~1]{Lev2}
or the lemma on p.~87 in \cite{Ker}.
We first show that $\ind ( A \oplus B ) \geq \ind A + k$. Let $X$ and $Y$
be respectively the maximal isotropic subspaces for $A$ and $B$.
Denote $\theta$ and $\eta$ the forms on vector spaces $V$ and $W$ associated
with $A$ and $B$ respectively.
Then, for all $x,x' \in X$ and $y,y' \in Y$, one has
$$
   (\theta\oplus\eta)(x\oplus y,x'\oplus y')=\theta(x,x') + \eta(y,y') = 0.
$$
Hence $X \oplus Y$ is isotropic for $\theta \oplus \eta$ and 
$$
   \ind ( A \oplus B) \geq \dim ( X \oplus Y) = \dim X + \dim Y = \ind(A) + k.
$$

Let $Z$ be a supplementary subspace of $Y$ in $W$.
Let $U\subset V\oplus W$ be a totally isotropic subspace for $A \oplus B$,
with basis $(\alpha_1,\dots,\alpha_n)$. Suppose that $U$ is maximal, i.e.
$$
          n = \ind( A \oplus B).
$$
For all $i\in \{1,\dots,n \}$, there exist $x_i \in V$, 
$y_i \in Y$ and $z_i \in Z$ such that $\alpha_i=x_i+y_i+z_i$.
Let $r$ be the rank of the family $\{ z_i \}_i$. After re-ordering,
we can suppose that $z_1,\dots,z_r$ are linearly independent.
Then after the replacement of $\alpha_i$ with
$\alpha_i+\sum_{j=1}^r \lambda_{ij}\alpha_j$, $i={r+1},\dots,n$, for suitable
$\lambda_{ij}$, we may achieve that $z_{r+1}=\dots=z_n=0$.
Similarly, if $s$ is the rank of the family $\{x_i\}_{r < i \leq n }$,
we may achieve that $x_{r+1},\dots,x_{r+s}$ are linearly independent and
$x_{r+s+1}=\dots=x_n=0$. We obtain the following family of rank $n$:
\begin{align*}
& \alpha_i= x_i +  y_i+z_i, & i&=1,\dots,r, \\
& \alpha_i= x_i +  y_i,     & i&=r+1,\dots,r+s, \\
& \alpha_i =  \ \ \ \ \ \  y_i,     & i&=r+s+1,\dots,n. 
\end{align*}
For all $i,j \in \{r+1,\dots,r+s\}$, we have
$$
  \theta(x_i,x_j)=\theta(x_i,x_j)+\eta(y_i,y_j)=(\theta\oplus\eta)(\alpha_i,\alpha_j)=0.
$$
Hence there exists a subspace of $V$ of dimension $s$ on which $\theta$ vanishes.
It remains to show that
\begin{equation}\label{eq.transit}
                                  s \geq n-k.
\end{equation}

Since the polynomial $\Delta_B$ is non zero, there exists $\lambda \in \F$
such that $B^\lambda:=B + \lambda\transpose{B}$ is non-singular.
The family $\{y_i \}_{r+s < i \leq n}$ is linearly independent and the subspace $Y'$
of $Y$ spanned by this family has dimension $n-r-s$. Moreover, for all $i > r+s$ and
all $j$, we have
$$
    \eta(y_i,z_j)=\eta(y_i,y_j)+\eta(y_i,z_j)=\theta(0,x_j)+\eta(y_i,y_j+z_j)
               =(\theta\oplus\eta)(\alpha_i,\alpha_j)=0.
$$
and similarly $\eta(z_j,y_i)=0$. Hence, if $\eta^\lambda$ is the form associated with
$B^\lambda$ then
$$
  \eta^\lambda(y,z_j)=0 \text{ for } y \in Y' \text{ and } 1 \leq j \leq r,
$$
and $Y'$ is orthogonal to the subspace $\langle z_1,\dots,z_r \rangle$ of $Z$ with
respect to $\eta^\lambda$. If we extend the basis of $Y'$ to a basis
$(\tilde y_1,\dots,\tilde y_k)$ of $Y$, and $(z_1,\dots,z_r)$ to a basis of $Z$,
then $B^\lambda$ has an/ $(n-r-s)\times(k+r)$-block of zeros whose rows correspond to
$(y_{r+s+1},\dots,y_n)$ and columns correspond to
$(\tilde y_1,\dots,\tilde y_k,z_1,\dots,z_r)$. Since $B^\lambda$ is non-singular,
we conclude that $(n-r-s)+(k+r)\le 2k$, thus (\ref{eq.transit}) holds.
\end{proof}


\subsection{A concordance condition for proportional matrices}\label{sect.propor}

For a polynomial $F(t)\in\F[t]$, we denote $\bar F(t)=t^n F(t^{-1})$ where $n=\deg F$.

Let $F(t)\in\F[t]$, $F(0)\ne 0$. We say that $F$ is {\it symmetric} if $\bar F=cF$
with $c\in\F^\times$. Since $\bar{\bar F}=F$, we have
$c=\pm1$ in this case. Thus any symmetric polynomial $F$ is either {\it palindromic},
i.e. $F=\bar F$, or {\it antipalindromic}, i.e. $F=-\bar F$. 
Evaluating $F$ at $t=\pm 1$, we see that $F(t)$ is a multiple of $t-1$ when $F$
is antipalindromic, and a multiple of $t+1$ when $F$ is palindromic of odd degree.
The equation (\ref{sym.Delta}) means that the Alexander polynomial of an admissible
matrix of even (resp. odd) size is palindromic (resp. antipalindromic).

\begin{definition}
Two polynomials $P,Q\in\F[t]$ are {\it Fox-Milnor equivalent} and  if there exist
$F,G\in\F[t]$ with $F(0)G(0)\ne 0$, such that $F\bar FP=G\bar GQ$. We denote $P\eFM Q$
\end{definition}

\begin{proposition} \label{meta}
If $A$ is a metabolic nonsingular matrix, then there exists $F \in \F[t]$ with 
$F(0) \neq 0$ such that $\Delta_A(t)= F(t) \bar F(t)$, in particular, $\Delta_A\eFM 1$.
\end{proposition}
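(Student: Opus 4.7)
The plan is to pick coordinates adapted to the maximal isotropic subspace and then do the block determinant computation directly, showing the two factors of $\det(A-tA^\top)$ are $F(t)$ and $\bar F(t)$ for an explicit $F$.

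First, since $A$ is metabolic, there is a basis of $\F^{2k}$ whose first $k$ vectors span a totally isotropic subspace, and in this basis $A$ is congruent to a matrix of the form
$$
A' = \begin{pmatrix} 0 & B \\ C & D \end{pmatrix},
$$
with $B,C,D$ of size $k\times k$. Congruence multiplies $\Delta_A$ by $(\det P)^2$ (a nonzero constant, absorbed into $F$), so we may assume $A=A'$. Since $\det A = \pm \det B \det C$ and $A$ is nonsingular, both $B$ and $C$ are nonsingular.

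Next I would compute
$$
A - t\transpose{A} = \begin{pmatrix} 0 & B - t\transpose{C} \\ C - t\transpose{B} & D - t\transpose{D} \end{pmatrix},
$$
and apply the standard anti-diagonal block formula
$\det\begin{pmatrix} 0 & X \\ Y & Z \end{pmatrix} = (-1)^k \det X \det Y$,
to obtain
$$
\det(A - t\transpose{A}) = (-1)^k \det(B - t\transpose{C}) \det(C - t\transpose{B}).
$$
Set $F(t) := \det(B - t\transpose{C})$. Then $F(0) = \det B \neq 0$, and because $\det\transpose{C}=\det C\ne 0$, the leading coefficient in $t$ is $(-1)^k\det C\ne 0$, so $\deg F = k$.

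The key identity is to rewrite the second factor in terms of $\bar F$. Taking the transpose inside the determinant and then factoring $-t$ out of each column,
$$
\det(C - t\transpose{B}) = \det(\transpose{C} - tB) = (-t)^k \det(B - t^{-1}\transpose{C}) = (-1)^k t^k F(t^{-1}) = (-1)^k \bar F(t).
$$
Substituting back,
$$
\Delta_A(t) = \det(A - t\transpose{A}) = (-1)^k F(t)\cdot (-1)^k \bar F(t) = F(t)\bar F(t),
$$
as required, with $F(0)\ne 0$.

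The only step that needs care is the normalization: the change of basis produces a nonzero scalar $(\det P)^2$, and one must check that this constant can be absorbed so that the factorization $\Delta_A = F\bar F$ holds on the nose (up to taking $F$ and $\bar F$ scaled by $\det P$ and $\det P$ respectively), which is immediate since $\bar{(cF)} = c\bar F$ for $c\in\F^\times$. Everything else is a direct block-matrix calculation, so there is no real obstacle.
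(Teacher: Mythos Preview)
Your proof is correct and follows essentially the same approach as the paper's: pass to the block form adapted to the isotropic subspace, use the anti-diagonal block determinant formula, set $F(t)=\det(B-t\transpose{C})$, and rewrite the other factor as $(-1)^k\bar F(t)$ via $\transpose{C}-tB=-t(B-t^{-1}\transpose{C})$. You are slightly more careful than the paper in tracking the effect of the congruence (the scalar $(\det P)^2$) and explaining how it is absorbed into $F$.
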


\begin{proof}
Suppose that
$A = \begin{pmatrix} 0 & B \\
                     C & D \end{pmatrix}$, 
where the blocks have size $k \times k$. Then
$\Delta_A(t)=(-1)^k \det(\transpose{C} - t B)\det(B - t \transpose{C})
  = (-1)^k (-t)^k \det(B-t^{-1}\transpose{C}) \det(B-t \transpose{C})$.
Hence  $\Delta_A(t)=t^kF(t^{-1})F(t)= \bar{F}(t) F(t)$ (here $k=\deg F$
since $C$ is nonsingular). 
\end{proof}

\begin{lemma} \label{alexconc}
If two nonsingular matrices $A$ and $B$ are concordant, then $\Delta_A\eFM\Delta_B$.
\end{lemma}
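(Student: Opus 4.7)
The plan is to apply Proposition~\ref{meta} to the matrix $M := A \oplus (-B)$. By hypothesis $A$ and $B$ are concordant, so $M$ is metabolic; and since $A$ and $B$ are nonsingular, so is $M$. Proposition~\ref{meta} then produces a polynomial $F \in \F[t]$ with $F(0) \neq 0$ satisfying
$$
   \Delta_{M}(t) = F(t)\bar F(t).
$$

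Next I would expand $\Delta_M$ in terms of $\Delta_A$ and $\Delta_B$. Since $A$ and $B$ are nonsingular, no normalizing power of $t$ intervenes: the matrix $M - t\transpose{M}$ is block diagonal with blocks $A - t\transpose{A}$ and $-(B - t\transpose{B})$, so
$$
   \Delta_M(t) = \det(A - t\transpose{A})\,\det(-B + t\transpose{B}) = (-1)^m\,\Delta_A(t)\,\Delta_B(t),
$$
where $m$ is the size of $B$. Combining with the factorization from Proposition~\ref{meta} yields $(-1)^m \Delta_A \Delta_B = F\bar F$. The symmetry~(\ref{sym.Delta}) applied to $\Delta_B$, whose degree is exactly $m$, gives $\bar \Delta_B = (-1)^m \Delta_B$; substituting absorbs the sign and reduces the equality to
$$
   \Delta_A\,\bar\Delta_B = F\,\bar F.
$$

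To conclude, I multiply both sides by $\Delta_B$ to obtain
$(\Delta_B \bar \Delta_B)\,\Delta_A = (F \bar F)\,\Delta_B$, which is the Fox-Milnor relation $\Delta_A \eFM \Delta_B$, witnessed by the polynomials $G := \Delta_B$ and $F$ (both with non-zero constant term, since $B$ is nonsingular and $F$ comes from Proposition~\ref{meta}). The only genuine obstacle along the way is bookkeeping the signs produced by replacing $B$ with $-B$; everything else is formal manipulation, and the symmetry~(\ref{sym.Delta}) is precisely the identity that makes those signs cancel.
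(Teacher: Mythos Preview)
Your proof is correct and follows essentially the same route as the paper: apply Proposition~\ref{meta} to the metabolic nonsingular matrix $A\oplus(-B)$, use the symmetry~(\ref{sym.Delta}) to absorb the sign $(-1)^m$, and read off the Fox--Milnor equivalence with witnesses $F$ and $G=\Delta_B$. The paper's write-up differs only cosmetically, dividing by $\Delta_B^2$ rather than multiplying by $\Delta_B$.
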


\begin{proof}
Since $A \oplus -B$ is metabolic, by Proposition \ref{meta} there exists $F \in \F[t]$
with $F(0)\ne0$ such that $\Delta_A(t)\Delta_{-B}(t) =  F(t)\bar F(t)$. Let $k$ be the
dimension of $B$. Then $\Delta_{-B}=(-1)^k\Delta_B$ and (\ref{sym.Delta}) reads
$\Delta_B(t)=(-1)^k\bar\Delta_B(t)$. Thus
$$
   \Delta_A(t)= \frac{F(t)\bar F(t)}{(-1)^k\Delta_B(t)}=\Delta_B(t) \,
                 \frac{F(t)\bar F(t)}{(-1)^k\Delta_B^2(t)}= \Delta_B(t) \,
                  \Big(\frac{F(t)\bar F(t)}{\Delta_B(t) \bar{\Delta}_B(t)} \Big).
$$
\end{proof}

\begin{lemma}\label{lem.propor} 
Let $A$ and $B$ be two nonsingular matrices of odd size such that 
$\Delta_B\eFM\mu\,\Delta_A$ with $\mu\in\F^\times$.
Suppose that  $\lambda A$ and $B$ are concordant for some $\lambda \in\F^\times$.
Then $\lambda=a^2\mu$ for some $a\in\F^\times$. In particular, $\mu A$ and $B$
are concordant.
\end{lemma}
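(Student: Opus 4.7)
The plan is to extract a multiplicative constraint on $\lambda/\mu$ from a Fox--Milnor identity on Alexander polynomials, evaluate it judiciously at $t=1$, and then deduce the concordance from a scalar congruence.

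First I would apply Lemma~\ref{alexconc} to the concordant pair $\lambda A,B$ to get $\Delta_{\lambda A}\eFM\Delta_B$. Since $A$ is nonsingular of odd size $n$, a direct computation from the definition of the normalized Alexander polynomial gives $\Delta_{\lambda A}(t)=\lambda^n\Delta_A(t)$. Combining this with the hypothesis $\Delta_B\eFM\mu\Delta_A$ and the transitivity of $\eFM$ (clear from its definition by composing the auxiliary polynomials) yields $\lambda^n\Delta_A\eFM\mu\Delta_A$. Since $\Delta_A\ne 0$, cancellation leaves an identity
$$
H(t)\,\bar H(t)\,\lambda^n=K(t)\,\bar K(t)\,\mu
$$
in $\F[t]$, for some $H,K\in\F[t]$ with $H(0)K(0)\ne 0$.

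The second step is to evaluate at $t=1$, where $\bar H(1)=H(1)$ and $\bar K(1)=K(1)$, so that the identity becomes a statement about squares in $\F^\times$. The potential obstruction is that $H(1)$ or $K(1)$ might vanish, which I would remove as follows. Factor $H=(t-1)^\alpha H_0$ and $K=(t-1)^\beta K_0$ with $H_0(1),K_0(1)\ne 0$; a short computation gives $\bar H=(-1)^\alpha(t-1)^\alpha\bar{H_0}$, and the analogous identity for $\bar K$. Comparing $(t-1)$-multiplicities on both sides forces $\alpha=\beta$, and cancellation of the common factor $(-1)^\alpha(t-1)^{2\alpha}$ leaves $H_0\bar{H_0}\lambda^n=K_0\bar{K_0}\mu$. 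Evaluating at $t=1$ now yields $H_0(1)^2\lambda^n=K_0(1)^2\mu$, so $\lambda^n/\mu\in(\F^\times)^2$. Because $n$ is odd, $\lambda^n=\lambda\cdot(\lambda^{(n-1)/2})^2$, and therefore $\lambda/\mu$ is itself a square: $\lambda=a^2\mu$ for some $a\in\F^\times$.

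For the \emph{in particular} assertion, the change of basis $P=aI$ realizes the congruence $\transpose{P}(\mu A)P=a^2\mu A=\lambda A$, so $\mu A$ and $\lambda A$ are concordant by Proposition~\ref{equiv}; transitivity (also part of that proposition) then gives $\mu A$ concordant to $B$. I expect the main technical point to be the $(t-1)$-bookkeeping in the second step, which is what makes the evaluation at $t=1$ legitimate; the oddness of $n$ is used crucially at the end to pass from a square class for $\lambda^n$ to one for $\lambda$.
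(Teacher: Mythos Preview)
Your argument is correct and follows essentially the same route as the paper's proof: derive $\lambda^{n}\Delta_A\eFM\mu\Delta_A$ from Lemma~\ref{alexconc}, deduce that $\lambda/\mu$ is a square using the oddness of $n$, and conclude via the congruence $\mu A\cong a^2\mu A=\lambda A$. The paper compresses the passage from $\lambda^{n}\Delta_A\eFM\mu\Delta_A$ to $\lambda=a^2\mu$ into a single ``hence''; your evaluation at $t=1$ with the $(t-1)$-bookkeeping is a clean way to justify that step.
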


\begin{proof}
Let $2n+1$ be the dimension of $A$. Then by Lemma \ref{alexconc}
$$
   \lambda^{2n+1}\Delta_A = \Delta_{\lambda A}\eFM\Delta_B\eFM\mu\Delta_A,
$$
hence $\lambda=a^2\mu$, $a\in\F^\times$. Note that $a^2\mu A$ is congruent
(and hence concordant) to $\mu A$.
\end{proof}


\subsection{The coindex of a concordance class}\label{sect.coind}
 
Remind that the Witt index $\ind_\F(A)$ of a matrix $A$ is the maximal dimension
of a totally isotropic subspace for $A$, see Section \ref{sect.witt.index}.

\begin{definition}
The \emph{Witt coindex} of a matrix $A$ of dimension $n$ is the integer
$$
     \beta(A)=\beta_\F(A)=   n - 2 \ind_\F (A).
$$
\end{definition}

By definition, $A$ is metabolic if and only if $\beta(A)=0$. Note also
that if $\det A \neq 0$, then $\beta(A)$ is a non-negative integer.
 
\begin{proposition}  \label{algconc}
If admissible matrices $A$ and $A'$ are concordant, then $\beta(A)=\beta(A')$.
\end{proposition}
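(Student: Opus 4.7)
The strategy is to reduce the statement to two applications of Lemma~\ref{witt} by inserting an auxiliary summand that makes both pieces metabolic in different ways. Let $A$ have size $n$ and $A'$ have size $n'$, and denote $2k = n+n'$, so that the hypothesis ``$A$ and $A'$ are concordant'' says that $A \oplus -A'$ is metabolic of size $2k$.

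My plan is to study the single matrix
$$
  M \;=\; A \oplus (-A') \oplus A'
$$
in two different ways, grouping it first as $A \oplus \bigl((-A') \oplus A'\bigr)$ and then as $(A \oplus -A') \oplus A'$. In the first grouping, the second summand $(-A') \oplus A'$ is metabolic via the diagonal subspace $\{(v,v) : v \in \F^{n'}\}$, which is totally isotropic of dimension $n'$; moreover it is admissible since its Alexander polynomial is (up to a sign and a power of $t$) the product $\Delta_{-A'}(t)\Delta_{A'}(t)$, which is non-zero because $A'$ is admissible. Hence Lemma~\ref{witt} gives
$$
  \ind(M) \;=\; \ind(A) + n'.
$$
In the second grouping, $A \oplus -A'$ is metabolic of size $2k$ by hypothesis, and it is admissible because $\Delta_A\,\Delta_{-A'} \neq 0$. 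Applying Lemma~\ref{witt} again yields
$$
  \ind(M) \;=\; \ind(A') + k \;=\; \ind(A') + \tfrac{n+n'}{2}.
$$

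Equating the two expressions for $\ind(M)$ gives $\ind(A) - \ind(A') = (n-n')/2$, which rearranges to $n - 2\ind(A) = n' - 2\ind(A')$, i.e. $\beta(A) = \beta(A')$.

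There is no real obstacle here beyond verifying the hypotheses of Lemma~\ref{witt}: one needs that the summand being split off is both admissible and metabolic. Admissibility of a direct sum of admissible matrices is immediate from the multiplicativity of $\det(X - tX^{\top})$, and the metabolicity of $(-A') \oplus A'$ is witnessed by its diagonal. Everything else is bookkeeping of sizes.
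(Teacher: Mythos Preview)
Your proof is correct and follows essentially the same approach as the paper's own proof: both consider the triple sum $A \oplus (-A') \oplus A'$, group it in two ways, and apply Lemma~\ref{witt} once to each grouping to compare $\ind(A)$ and $\ind(A')$. Your version is in fact more explicit about the arithmetic of the sizes and about why the auxiliary summands satisfy the hypotheses of Lemma~\ref{witt}.
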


\begin{proof}
Suppose that $A$ and $A'$ are concordant, which means that there exists
a metabolic $M$ with $A \oplus -A' \equiv M$. Hence  $A\oplus -A'\oplus A'\equiv M\oplus A'$.
By reflexivity, there exists $M'$ metabolic such that $ -A' \oplus A' \equiv M'$ and hence
$A\oplus M'\equiv M\oplus A'$. By Lemma \ref{witt}, it follows that $\beta(A)=\beta(A')$.
\end{proof}

\begin{example} \label{example.coind1}
Let $A=\left(\smallmatrix 0&B_{n+1}\\B_n&0\endsmallmatrix\right)^{-1}$ where
$B_n$ and $B_{n+1}$ are nonsingular square matrices of size
$n$ and $n+1$ respectively. Using the cofactor matrix we obtain $\beta(A)=1$.
\end{example}


\subsection{The coindex of an S-equivalence class}\label{sect.S-equiv}

We say that two square matrices are S-equivalent (over $\F$) if they are
obtained from each other by a sequence of congruences and the following replacements:
\begin{equation}\label{def.S-equiv}
    A \leftrightarrow
       B=\left(\begin{matrix}
                 A & 0 & u \\
                 0 & 0 & 0 \\
                 0 & 1 & 0 \end{matrix}\right),
    \qquad
    A \leftrightarrow
       C=\left(\begin{matrix}
                 A & 0 & 0 \\
                 0 & 0 & 1 \\
                 v & 0 & 0 \end{matrix}\right)
\end{equation}
where $u$ (resp. $v$) is a column (resp. row) with entries in $\F$.

\begin{proposition}\label{S=>conc} {\rm \cite[Claim in \S8]{Lev1} }
S-equivalent matrices are concordant.
\end{proposition}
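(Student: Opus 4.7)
The plan is to reduce the problem to the two enlargement moves. Since S-equivalence is generated by congruences together with the replacements $A\leftrightarrow B$ and $A\leftrightarrow C$ of \eqref{def.S-equiv}, and since Proposition~\ref{equiv} already gives that congruent matrices are concordant, it suffices to prove that each of these two moves yields concordant matrices. Before invoking the concordance relation one must first check that $B$ and $C$ are admissible whenever $A$ is; this follows from a cofactor expansion of $\det(B-tB^\top)$ along its middle row, whose only non-zero entry is $-t$ in the last column, which yields $\det(B-tB^\top)=t\det(A-tA^\top)$. The analogous expansion for $C$ gives the same identity, so the Alexander polynomial of the enlarged matrix coincides with $\Delta_A$ up to the normalizing power of $t$, and admissibility is preserved.

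To prove concordance of $A$ and $B$ I would exhibit an explicit half-dimensional totally isotropic subspace for $A\oplus(-B)$. Writing vectors in $\F^n\oplus\F^{n+2}$ as $(x,y,a,b)$ with $x,y\in\F^n$ and $a,b\in\F$, a short computation gives
$$
(A\oplus(-B))\bigl((x_1,y_1,a_1,b_1),(x_2,y_2,a_2,b_2)\bigr)
= x_1^\top A x_2 - y_1^\top A y_2 - (y_1^\top u)\,b_2 - b_1 a_2.
$$
I take $U=\{(x,x,a,0)\mid x\in\F^n,\ a\in\F\}$, of dimension $n+1$, which is exactly half of the size $2n+2$ of $A\oplus(-B)$. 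On any pair of vectors of $U$ the first two terms cancel because $y_i=x_i$, while the last two vanish because $b_1=b_2=0$. Hence $U$ is totally isotropic of the required dimension, $A\oplus(-B)$ is metabolic, and $A$ and $B$ are concordant.

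The case $A\leftrightarrow C$ is treated identically. With the same variable conventions the combined form now reads $x_1^\top Ax_2-y_1^\top Ay_2-a_1b_2-b_1(vy_2)$, and the same subspace $U$ is again totally isotropic: the mixed terms $a_1b_2$ and $b_1(vy_2)$ both vanish since $b_1=b_2=0$. Alternatively one can observe that $C=B(A^\top,u^\top)^\top$ is (the transpose of) a $B$-type matrix, and reduce to the first case via the obvious compatibility of transposition with the orthogonal sum and with metabolicity (if $P^\top MP$ has a zero top-left $k\times k$ block then so does its transpose).

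I do not anticipate a serious obstacle: once the correct isotropic subspace $U$ is guessed, the proof is pure bookkeeping. The only mildly non-routine step is the admissibility check for $B$ and $C$, which is what makes the assertion ``$A$ and $B$ are concordant'' meaningful in the sense of the paper's definition.
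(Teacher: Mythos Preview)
Your proof is correct. The paper does not give its own argument for this proposition but simply cites Levine \cite[\S8]{Lev1}; your explicit metabolizer $U=\{(x,x,a,0)\}$ for $A\oplus(-B)$ (and the analogous one for $C$) is precisely the standard verification, and it mirrors the diagonal-type metabolizer the paper itself exhibits in the proof of Proposition~\ref{equiv} for congruent matrices. Your admissibility check via cofactor expansion is also correct and worth including, since the paper's notion of concordance is only defined for admissible matrices.
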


This fact combined with Proposition~\ref{algconc} implies the invariance
of $\beta$ under S-equivalence of admissible matrix.
However, the following proposition shows that the invariance of $\beta$
under S-equivalence holds without the admissibility assumption and thus
it is  a well-defined link invariant (see \S\ref{coind(link)}).

\begin{proposition}  \label{S-equiv}
If two matrices $A$ and $A'$ are S-equivalent, then $\beta(A)=\beta(A')$.
\end{proposition}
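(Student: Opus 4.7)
The plan is to reduce, in the standard way, to checking invariance under each generator of S-equivalence. Congruence is trivial: it preserves both the size of a matrix and the dimension of any totally isotropic subspace, and hence $\beta$. It therefore remains to show that the two moves $A\leftrightarrow B$ and $A\leftrightarrow C$ in (\ref{def.S-equiv}) preserve $\beta$. I would first note the elementary identity $\beta(A)=\beta(\transpose{A})$, valid for every square matrix, because a subspace is totally isotropic for $A$ if and only if it is totally isotropic for $\transpose{A}$. Since the matrix $C$ of the second move is the transpose of the matrix obtained by applying the first move to $\transpose{A}$ with $u=\transpose{v}$, the second move reduces to the first. So the whole proof boils down to proving $\beta(B)=\beta(A)$ for the first move, which, since $B$ has size $n+2$ when $A$ has size $n$, is equivalent to $\ind_\F(B)=\ind_\F(A)+1$.

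The inequality $\ind_\F(B)\ge\ind_\F(A)+1$ is by direct construction: if $U\subseteq\F^n$ is a totally isotropic subspace for $A$ of maximal dimension, then $U\oplus\langle e_{n+1}\rangle\subseteq\F^{n+2}$, with $U$ embedded in the first $n$ coordinates, is totally isotropic for $B$; this uses only that row $n+1$ of $B$ vanishes and that the first $n$ entries of $Be_{n+1}$ vanish.

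For the reverse inequality, I would let $W\subseteq\F^{n+2}$ be a totally isotropic subspace for $B$, consider the projection $\pi\colon\F^{n+2}\to\F^2$ sending $(x,y,z)$ to $(y,z)$, and set $V_0=\{x\in\F^n : (x,0,0)\in W\}$. The explicit expansion
\[
   \transpose{(x,y,z)}\, B\, (x',y',z') \;=\; \transpose{x}\! A x' + z'(\transpose{x}\! u) + z y'
\]
immediately shows that $V_0$ is totally isotropic for $A$, so $\dim V_0\le\ind_\F(A)$; combined with $\dim W=\dim V_0+\dim\pi(W)$, this already settles the cases $\dim\pi(W)\le 1$. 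The main obstacle is the case $\dim\pi(W)=2$, where I would choose preimages $\alpha=(x,1,0)\in W$ and $\alpha'=(x',0,1)\in W$ and argue by dichotomy on $x$. If $x\notin V_0$, the displayed formula shows $V_0+\langle x\rangle$ is totally isotropic for $A$, improving the bound to $\dim V_0\le\ind_\F(A)-1$ and giving $\dim W\le\ind_\F(A)+1$. If instead $x\in V_0$, then by definition $(x,0,0)\in W$, hence $e_{n+1}=\alpha-(x,0,0)\in W$; but the same formula then gives $\transpose{\alpha'}\,B\,e_{n+1}=1$, contradicting the isotropy of $W$. The second case is therefore excluded, and the upper bound follows.
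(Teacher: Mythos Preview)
Your proof is correct and follows essentially the same idea as the paper's: both reduce to the elementary move and show $\ind_\F(B)=\ind_\F(A)+1$ by constructing the easy direction directly and, for the hard direction, deriving a contradiction from the entry $B_{n+2,n+1}=1$ when the isotropic subspace projects onto both added coordinates. Your organization is slightly cleaner---the transpose reduction disposes of the move $A\leftrightarrow C$ in one line, and the projection/dichotomy argument avoids the explicit basis bookkeeping that the paper inherits from the proof of Lemma~\ref{witt}.
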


\begin{proof}
The proof is almost the same as for Proposition~\ref{witt}.
It is enough to show that $\ind B=\ind C=1+\ind A$ for $A$, $B$, and $C$
in (\ref{def.S-equiv}). We do it for $B$ (for $C$ the arguments are the same).
It is clear that $\ind B\ge 1+\ind A$. Let us show that $\ind B\le 1+\ind A$.

Let $\theta$ and $\Theta$ denote the forms on vector spaces $V$ and
$W=V\oplus Y\oplus Z$ associated with $A$ and $B$ respectively where
$Y$ and $Z$ are the one-dimensional subspaces of $W$ corresponding to
the last two rows and columns of $B$. Let $X$ and $U$ be the maximal
isotropic subspaces for $A$ and $B$. Let $(\alpha_1,\dots,\alpha_n)$ be a
basis of $U$, thus $n = \ind B$. Write $\alpha_i=x_i+y_i+z_i$ with $x_i\in V$,
$y_i\in Y$, $z_i\in Z$. We define $r$ and $s$ as in the proof of
Proposition~\ref{witt}, and we impose the same assumptions on
the vectors $y_i$'s and $z_i$'s (here $r\le 1$ and $n-r-s\le 1$).
For all $i,j\in\{r+1,\dots,r+s\}$, we have
$\Theta(x_i,y_j)=\Theta(y_i,x_j)=\Theta(y_i,y_j)=0$, whence
$$
   \theta(x_i,x_j) = \Theta(x_i+y_i,x_j+y_j)=\Theta(\alpha_i,\alpha_j)=0.
$$
Thus there is a subspace of $V$ of dimension $s$ on which $\theta$ vanishes,
and it remains to show that $s\ge n-1$
(that is (\ref{eq.transit}) with $k=1$). Suppose that $s<n-1$.
In our context this means that $r=1$ and $s=n-2$, i.e.,
$\alpha_1=x_1+y_1+z_1$ and $\alpha_n=y_n$ with $z_1\ne 0$ and $y_n\ne 0$.
We have $\Theta(x_1,y_n)=\Theta(y_1,y_n)=0\ne\Theta(z_1,y_n)$
(see the penultimate column of $B$), which contradicts the fact that
$\Theta(x_1+y_1+z_1,y_n)=\Theta(\alpha_1,\alpha_n)=0$.
\end{proof}


\subsection{Computation of the coindex of a symmetric form over $\Q$}\label{sect.Serre}

Let $Q$ be a non-degenerate symmetric $n\times n$ matrix over $\Q$.
The computation of $\beta(Q)$ is known for more than a century and
it is exposed (sometimes more sometimes less explicitly)
in all textbooks on quadratic forms.
For the reader's convenience we present it here in the form given in Serre's book
\cite{Ser}. In \cite{MH} (see also \cite[Appendix B]{Liv})
a computation of $\beta(Q)$ is presented in another form,
without explicit usage of the Hasse symbol.

Let $[a_1,\dots,a_n]$ be a diagonalization of $Q$ such that all the $a_i$'s are
square-free integers. Let $P$ be the set of prime divisors of $\prod a_i$
which do not divide $\gcd(a_1,\dots,a_n)$.
Then
$$
     \beta(Q) = \max\big\{\, \beta_\R(Q),\, \max_{p\in P\cup\{2\}}\beta_{\Q_p}(Q)\,\big\}
     \qquad\text{ and }\qquad
     \beta_\R(Q)=|\sigma|
$$
where $\sigma$ is the signature of $Q$.

The $\beta_{\Q_p}(Q)$ are computed as follows. Let
$\hasse=\hasse_p(Q)=\prod_{i<j}(a_i,a_j)_p$ be
the {\it Hasse symbol} (the ``invariant $\varepsilon$'' in \cite[Ch.~4, \S2]{Ser});
here $(\,,)_p$ is the Hilbert symbol.
Note that a different definition of the Hasse symbol (with $\prod_{i\le j}$ instead
of $\prod_{i<j}$) is used in \cite{Lev1}. Let $d=d_p(Q)$ be the {\it discriminant} of $Q$,
that is the class of $\det Q$ in $\Q_p^\times/(\Q_p^\times)^2$.
We define the {\it normalized discriminant} $\tilde d$ and {\it normalized Hasse invariant}
$\tilde\hasse$ as follows%
\footnote{Some authors define the discriminant of $Q$ to be $\tilde d$.}
$$
   \tilde d = (-1)^ld\in\Q_p^\times/(\Q_p^\times)^2,
   \qquad
   \tilde\hasse = (-1,d)_p^l(-1,-1)_p^{l(l+1)/2}\hasse,
   \qquad
   \text{where $l=\lfloor n/2\rfloor$.}
$$
Note that $(-1,-1)_p=(-1)^{p-1}$.

If $n=2l$, then
$$
  \beta_{\Q_p}(Q) = \begin{cases}
     0      &\text{if $\tilde d=1$ and $\tilde\hasse=1$,}\\
     2      &\text{if $\tilde d\ne 1$,}\\
     4      &\text{if $\tilde d=1$ and $\tilde\hasse=-1$.}
  \end{cases}
$$

If $n=2l+1$, then
$$
  \beta_{\Q_p}(Q) = \begin{cases}
     1      &\text{if $\tilde\hasse=1$,}\\
     3      &\text{if $\tilde\hasse=-1$.}
  \end{cases}
$$


\section{Isometric structures} \label{isom}

Let $\F$ be a field of characteristic zero (as above, the most interesting cases for
us are $\F$ is $\Q$, $\R$, or $\Q_p$).

\begin{definition}
An \emph{isometric structure over} $\F$ is a triple $(V,b,T)$ where $b$
is a non-singular symmetric bilinear form on a finite dimensional $\F$-vector
space $V$ and $T$ is an isometry of $(V,b)$, i.e. 
\begin{equation}
b(x,y)=b(Tx,Ty) \text{ for all } x,y \in V.                       \label{isome}
\end{equation}
\end{definition}

Let $\Delta_T(t)=\det(T-tI)$ be the characteristic polynomial of $T$.

If $Q$ and $T$ are $n\times n$ matrices, $T$ is nonsingular,
$\transpose{Q}=Q$, and $\transpose{T}QT=Q$, then the corresponding 
isometric structure on the coordinate space $\F^n$ is denoted by $(Q,T)$.


\subsection{The concordance group $\bar{\mathcal{G}}_\F^*$}

\begin{definition}
An isometric structure $(V,b,T)$ is \emph{admissible} if $\Delta_T(-1) \neq 0$.
\end{definition}

The \emph{sum} of two isometric structures is defined as 
$$
      (V,b,T) \oplus (V',b',T')=(V\oplus V',b \oplus b', T \oplus T').
$$

\begin{definition}
An isometric structure $(V,b,T)$ is \emph{metabolic} if $V$ contains a subspace
of dimension $\frac12\dim V$, totally isotropic for $b$ and invariant under $T$. 
Two isometric structures $(V_1,b_1,T_1)$ and $(V_2,b_2,T_2)$ are \emph{concordant}
if $(V_1,b_1,T_1) \oplus (V_2,-b_2,T_2)$ is metabolic.
\end{definition}

The concordance of admissible isometric structures is an equivalence relation.
Equivalence classes form an abelian group under orthogonal sum denoted
$\bar{\mathcal{G}}_\F^*$, and the neutral element is the class of
admissible metabolic structures \cite[\S6]{Lev1}.

Let $A$ be a non-singular admissible matrix with $\det A \neq 0$.
Then the congruence class of the matrix $A+\transpose{A}$ and the similarity class
of $A^{-1}\transpose{A}$ are determined by the congruence class of $A$.
This implies that the congruence
class of $A$ determines a well-defined isometric structure
$(Q,T)=(A+\transpose{A}, A^{-1}\transpose{A})$.
Note that
\begin{equation} \label{DeltaA=det(A)DeltaT}
                              \Delta_A(t)=\det(A) \Delta_T(t).
\end{equation}
Let $\bar{\mathcal{G}}^\F_* $ be the subgroup of $\bar{\mathcal{G}}^\F$ of
concordance classes of admissible matrices $A$ such that $\Delta_A(-1) \neq 0$.

\begin{proposition}\label{prop.isom}
\cite[\S\S 8--9]{Lev1}   \label{iso}
The morphism 
\begin{align*}
   \bar{\mathcal{G}}^\F_* & \longrightarrow  \bar{\mathcal{G}}_\F^*, \qquad
      A \longmapsto   (A+\transpose{A}, A^{-1}\transpose{A}),
                  \qquad \det A\ne 0,
\end{align*}
is well-defined and it is an isomorphism.
\end{proposition}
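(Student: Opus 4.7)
The plan is to verify the four requirements at the level of matrix representatives: well-definedness of the assigned triple, compatibility with congruence and orthogonal sum, preservation of concordance in both directions, and surjectivity. For a single matrix $A$ with $\det A\neq 0$ and $\Delta_A(-1)\neq 0$, I first check that $Q:=A+\transpose{A}$ is symmetric with $\det Q=\Delta_A(-1)\neq 0$, and that $T:=A^{-1}\transpose{A}$ satisfies $\transpose{T}QT=Q$ via the identity $\transpose{T}=QT^{-1}Q^{-1}$; admissibility of the resulting isometric structure then follows from $\det(I+T)=\det Q/\det A\neq 0$. Congruence $A'=\transpose{P}AP$ produces an isomorphic triple through the change of basis $P$, and additivity under orthogonal sum is immediate from the definitions.

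To show the map descends to concordance classes, I would apply the following single-matrix assertion to $M:=A_1\oplus(-A_2)$: if $M$ is non-singular and admits a half-dimensional subspace $U$ totally isotropic for $M$, then $U$ is isotropic for $Q_M$ by polarization and $T_M$-invariant. The invariance comes from dimension counting—$U=U^\perp$ with respect to the non-degenerate pairing given by $M$—together with the computation $\transpose{z}M(T_M v)=\transpose{z}\transpose{M}v=\transpose{v}Mz=0$ for $z,v\in U$, which places $T_M v$ in $U^\perp=U$. Hence the associated isometric structure is metabolic, so the induced map is a well-defined group homomorphism.

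The hard part is the converse implication, which furnishes injectivity. Suppose $(Q_M,T_M)$ is metabolic with a half-dimensional subspace $U$ that is $Q_M$-isotropic and $T_M$-invariant; I claim $U$ is totally isotropic for $M$ itself. Define $\psi(x,y):=\transpose{x}My$ on $U$; isotropy for $Q_M$ means $\psi(x,y)+\psi(y,x)=0$, and using $MT_M=\transpose{M}$ together with $T_M y\in U$ one finds
\begin{equation*}
\psi(x,T_M y) = \transpose{x}\transpose{M}y = \transpose{y}Mx = \psi(y,x) = -\psi(x,y),
\end{equation*}
so that $\psi(x,(T_M+I)y)=0$ on $U\times U$. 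The admissibility hypothesis $\Delta_{T_M}(-1)\neq 0$ forces $T_M+I$ invertible on the full space; since $U$ is finite-dimensional and $T_M$-stable, $(T_M+I)|_U$ is a bijection of $U$, and these two identities force $\psi\equiv 0$. Applied to $M=A_1\oplus(-A_2)$, this shows that $A_1\oplus(-A_2)$ is metabolic whenever the associated isometric structures are concordant.

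Surjectivity is a direct construction. Given an admissible structure $(Q,T)$, set $A:=Q(I+T)^{-1}$, which makes sense precisely because $\Delta_T(-1)\neq 0$. Rewriting $\transpose{T}QT=Q$ as $I+\transpose{T}=Q(I+T^{-1})Q^{-1}$ gives $(I+\transpose{T})^{-1}=Q(I+T)^{-1}TQ^{-1}$ and hence $\transpose{A}=Q(I+T)^{-1}T$; summing yields $A+\transpose{A}=Q(I+T)^{-1}(I+T)=Q$ and $A^{-1}\transpose{A}=(I+T)Q^{-1}Q(I+T)^{-1}T=T$. Finally $\det A=\det Q/\det(I+T)\neq 0$ and $\Delta_A(-1)=\det Q\neq 0$, so $A$ represents a class in $\bar{\mathcal{G}}^\F_*$ that maps to $(Q,T)$.
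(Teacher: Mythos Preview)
Your argument is correct once both representatives $A_1,A_2$ are assumed non-singular, and your computation for the inverse $(Q,T)\mapsto Q(I+T)^{-1}$ coincides with the one the paper records. However, there is one point you do not address: the domain $\bar{\mathcal G}^\F_*$ consists of concordance classes of \emph{admissible} matrices with $\Delta_A(-1)\ne0$, and such a matrix may well be singular. The formula $A\mapsto(A+\transpose{A},A^{-1}\transpose{A})$ makes sense only when $\det A\ne0$, so for the map to be defined on all of $\bar{\mathcal G}^\F_*$ one must know that every class contains a non-singular representative. The paper isolates this as the main ingredient (Lemma~\ref{lem.nonsing}, due to Levine): every matrix with $\Delta_A(-1)\ne0$ is S-equivalent, hence concordant, to a non-singular one. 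Without this step your well-definedness check only shows that the map is consistent on those classes that happen to contain a non-singular matrix.

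Apart from this omission, your proof is more explicit than the paper's, which simply refers to \cite{Lev1} and \cite{LivNaik} and records the inverse formula. In particular, your injectivity argument---the identity $\psi(x,(T_M+I)y)=0$ on a $T_M$-invariant $Q_M$-isotropic subspace $U$, combined with the invertibility of $T_M+I$ coming from admissibility---is exactly the mechanism that forces $U$ to be isotropic for $M$ itself, and this is the heart of Levine's original argument.
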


The inverse is given by $(Q,T)\mapsto Q(I+T)^{-1}$
(see \cite[Thm.~3.5.16]{LivNaik} for a proof that this is indeed the inverse).
The proof of Proposition \ref{iso} follows mainly from the following lemma.

\begin{lemma}\label{lem.nonsing} {\rm\cite[Lemma 8]{Lev1}}
Every matrix $A$ with $\Delta_A(-1) \neq 0$ is S-equivalent (and then concordant
by Proposition~\ref{S=>conc}) to an admissible non-singular matrix.
\end{lemma}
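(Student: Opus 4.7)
The plan is to induct on the size $n$ of $A$. Observe first that the hypothesis $\Delta_A(-1) \neq 0$ is equivalent to $\det(A + \transpose{A}) \neq 0$, since the normalizing factor $t^{-k}$ in the definition of $\Delta_A$ is nonzero at $t=-1$. This in turn forces $\ker A \cap \ker \transpose{A} = \{0\}$, since any vector $u$ in the intersection satisfies $(A + \transpose{A})u = 0$. In the base cases $n \in \{0,1\}$, nondegeneracy of $A + \transpose{A}$ already forces $A$ itself to be nonsingular.

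For the inductive step, if $A$ is nonsingular there is nothing to prove, so assume $A$ is singular and pick a nonzero $v \in \ker A$. Since $v \notin \ker \transpose{A}$ there exists $w \in \F^n$ with $\transpose{v} A w = 1$, and $v, w$ are linearly independent as $\transpose{v} A v = 0$. Taking $v, w$ as the first two vectors of a basis of $\F^n$ and performing successive elementary basis changes (each of which is a congruence of $A$), I would bring $A$ into the block form
$$
    \begin{pmatrix} 0 & 1 & 0 \\ 0 & 0 & r \\ 0 & 0 & A' \end{pmatrix}
$$
with block sizes $1, 1, n-2$. The first column is zero because $Av = 0$, and the $(1,2)$-entry equals $\transpose{v} A w = 1$. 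Successive substitutions of the form $e_j \mapsto e_j + \lambda_j e_1$ (for $j \geq 3$) and $e_2 \mapsto e_2 + \mu e_1$ then clear the remaining entries in the first row, the $(2,2)$-entry, and the rest of the second column, without disturbing what has already been achieved --- this works precisely because the first column and the $(1,1)$-entry remain zero throughout.

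After a permutation of rows and columns, the displayed matrix equals the matrix $C$ of (\ref{def.S-equiv}) applied to the $(n-2) \times (n-2)$ matrix $A'$, with the row $r$ playing the role of $v$. Hence $A$ is S-equivalent to $A'$, and since the normalized Alexander polynomial is invariant under S-equivalence we have $\Delta_{A'}(-1) = \Delta_A(-1) \neq 0$, so the inductive hypothesis applies to $A'$, completing the argument. The main technical obstacle is the middle paragraph: one must pick the basis-change operations in the right order and verify that clearing each entry does not reintroduce nonzero values elsewhere, which ultimately reduces to the fact that the structure we gain from $v \in \ker A$ and $\transpose{v} A w = 1$ makes the reduction essentially triangular.
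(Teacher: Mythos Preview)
Your overall strategy is the standard one (and is essentially Levine's argument, which is what the paper cites without reproducing): use $\Delta_A(-1)\ne 0$ to find $v\in\ker A\setminus\ker\transpose A$, reduce by congruence to one of the S-equivalence shapes in \eqref{def.S-equiv}, and induct on the size.

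There is, however, a genuine gap in your reduction step. The substitution $e_j\mapsto e_j+\lambda_j e_1$ corresponds to $P=I+\lambda_j E_{1j}$, so $P^\top A P$ is obtained from $A$ by adding $\lambda_j$ times column~$1$ to column~$j$ (no effect, since column~$1$ is zero) and $\lambda_j$ times row~$1$ to row~$j$. In particular \emph{row~$1$ is never touched} by any of these operations, so they cannot clear the entries $a_{1j}$ for $j\ge 3$, contrary to what you assert. Your justification (``the first column and the $(1,1)$-entry remain zero throughout'') explains why the column operation is harmless, not why the first row gets cleared.

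The fix is straightforward but requires a second family of operations. First use $e_j\mapsto e_j-a_{j2}e_1$ for $j\ge 2$ to clear the second column below the $(1,2)$-entry; at this point column~$2$ equals $(1,0,\dots,0)^\top$. Then use $e_j\mapsto e_j-a_{1j}e_2$ for $j\ge 3$: the column part of this congruence subtracts $a_{1j}$ times column~$2$ from column~$j$, killing $a_{1j}$, while the row part adds a multiple of row~$2$ (now of the form $(0,0,*,\dots,*)$) to row~$j$, which does not disturb columns~$1$ and~$2$. After both steps the matrix has the displayed block shape. So you need substitutions involving $e_2$ as well as $e_1$, performed in that order; once this is corrected the rest of your argument goes through.
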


The isomorphism in Proposition~\ref{prop.isom} can be defined in invariant terms
by $(V,\theta)\mapsto(V,b,T)$ where $\theta$ is a non-degenerate bilinear
form on a vector space $V$, $b=\theta+\transpose\theta$,
and $T:V\to V$ is such that $\theta(x,y)=\theta(y,Tx)$ for any $x,y\in V$.


\subsection{Decomposition of $\bar{\mathcal{G}}_\F$} \label{modules}

Given an admissible isometric structure $(V,b,T)$,
the $\mathbb{F}$-vector space $V$ can be viewed as
an $\mathbb{F}[t^{\pm 1}]$-module through the action of $t$, as follows:
$$
    t v= T(v) \text{ for all }  v \in V.
$$
Note that $V$ is a torsion module, annihilated by $\Delta_T(t)$. 
The \emph{$\delta$-primary component} of $V$, where  $\delta$ is an irreducible
factor of $\Delta_T$,  is defined as
$$
        V_\delta = \ker \delta(T)^m \ \text{ for a large } \ m.
$$
Recall that $t +1$ cannot be a factor of $\Delta_T$ since $(V,b,T)$ is admissible.

By \cite[Lemma 10]{Lev1}, if $\delta_i$ and $\delta_j$ are irreducible factors of $\Delta_T$
such that $\delta_i(t)$ and $\delta_j(t^{-1})$ are relatively prime,
then $V_{\delta_i}$ and $V_{\delta_j}$ are orthogonal. In particular,
$V_{\delta}$ and $V_{\bar\delta}$ are orthogonal if $\delta$ is not symmetric
(see the definition in Section~\ref{sect.propor}).
Note that an irreducible symmetric polynomial different from $t \pm 1$ is
palindromic of even degree. Thus there is an orthogonal decomposition
$$
    V= \Big(\bigoplus_{\delta} V_\delta\Big)\oplus\Big(\bigoplus_\lambda(V_\lambda\oplus V_{\bar\lambda})\Big)
$$
where the $\delta$ are symmetric and the $\lambda$ are non-symmetric factors of $\Delta_T$.
The restriction of $(b,T)$ to each summand $V_\lambda\oplus V_{\bar\lambda}$ is metabolic
(because $\lambda$ is orthogonal to itself and hence $b$ vanishes on $V_\lambda\oplus 0$).
As a direct consequence of these observations, we obtain that the isometric structure $(b,T)$
is metabolic if and only if its restriction to each of the factors $V_\delta$ is metabolic.
This fact can be equivalently formulated as follows.

\begin{proposition}\label{Levine.V_delta}
\cite[Lemma 11]{Lev1} The group $\bar{\mathcal{G}}_\F $ decomposes as a sum
$\bar{\mathcal{G}}_\F \simeq  \bigoplus_{\delta} \bar{\mathcal{G}}_\F^\delta$,
where $\delta$ runs over all irreducible symmetric polynomials and
$\bar{\mathcal{G}}_\F^\delta$ is the subgroup of concordance classes of isometric
structures whose characteristic polynomial is a power of $\delta$.
\end{proposition}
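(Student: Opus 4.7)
The plan is to produce the decomposition as a pair of mutually inverse homomorphisms, using the orthogonality and metabolicity observations that have already been assembled in the paragraphs preceding the statement.

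First I would define the two natural maps. The inclusion maps $\bar{\mathcal{G}}_\F^\delta\hookrightarrow\bar{\mathcal{G}}_\F$ assemble into a homomorphism $\Phi:\bigoplus_\delta\bar{\mathcal{G}}_\F^\delta\to\bar{\mathcal{G}}_\F$. In the opposite direction, given an admissible $(V,b,T)$, define
\[
\Psi[(V,b,T)] = \bigl([(V_\delta,b|_{V_\delta},T|_{V_\delta})]\bigr)_\delta,
\]
the sum being taken only over symmetric irreducible $\delta\mid\Delta_T$. The orthogonal decomposition already exhibited in the excerpt shows that the image of $\Phi$ hits $[(V,b,T)]$ up to the manifestly metabolic summands $V_\lambda\oplus V_{\bar\lambda}$, so $\Phi\circ\Psi=\mathrm{id}$ will be immediate as soon as $\Psi$ is well defined. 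Since primary decomposition is functorial under isometries, $\Psi\circ\Phi=\mathrm{id}$ is formal.

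The real content is therefore to check that $\Psi$ descends to concordance classes, i.e.\ that if $(V,b,T)$ is admissible and metabolic, then each $(V_\delta,b|_{V_\delta},T|_{V_\delta})$ is metabolic. Let $L\subset V$ be a $T$-invariant, totally isotropic subspace with $\dim L=\tfrac12\dim V$. Because $L$ is $T$-invariant, it inherits the $\F[t^{\pm1}]$-module primary decomposition $L=\bigoplus_\delta L_\delta$ with $L_\delta = L\cap V_\delta$, and each $L_\delta$ is automatically $T$-invariant and totally isotropic inside $V_\delta$. The remaining point is purely dimensional: for every symmetric $\delta$ the form $b|_{V_\delta}$ is non-degenerate (since $V_\delta$ is orthogonal to every other primary component except possibly $V_{\bar\delta}=V_\delta$), forcing $\dim L_\delta\le\tfrac12\dim V_\delta$; for each non-symmetric $\lambda$ paired with $\bar\lambda$, an isotropic subspace of $V_\lambda\oplus V_{\bar\lambda}$ has dimension at most $\dim V_\lambda=\tfrac12\dim(V_\lambda\oplus V_{\bar\lambda})$. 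Summing these upper bounds gives exactly $\tfrac12\dim V=\dim L$, so equality must hold for every $\delta$; in particular $L_\delta$ is a Lagrangian of $V_\delta$ and $(V_\delta,b|_{V_\delta},T|_{V_\delta})$ is metabolic.

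Finally, to go from metabolic structures to concordance classes, apply this Lagrangian-decomposition step to the sum $(V_1,b_1,T_1)\oplus(V_2,-b_2,T_2)$, whose $\delta$-primary component is $(V_1)_\delta\oplus-(V_2)_\delta$; metabolicity of each of these witnesses concordance of the $\delta$-components of the summands. Thus $\Psi$ is a well-defined group homomorphism and is inverse to $\Phi$. The main obstacle, as indicated, is precisely the dimension counting that forces each $L_\delta$ to be half-dimensional in $V_\delta$; everything else has been prepared by the orthogonality and metabolicity statements cited from \cite[Lemma~10]{Lev1} and the discussion preceding the proposition.
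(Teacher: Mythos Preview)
Your argument is correct and follows the same line as the paper's: the text preceding the proposition already records the orthogonal primary decomposition and the metabolicity of the non-symmetric summands, then states that the proposition is a ``direct consequence of these observations'' (citing Levine), with the key implication being exactly what you prove, namely that a metabolic admissible structure has metabolic $\delta$-primary components. Your dimension-counting argument for the Lagrangian $L$ is the natural way to make that implication explicit, and the paper omits it only because it defers to \cite[Lemma~11]{Lev1}.
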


We also need the following Levine's result.

\begin{proposition} \cite[Lemma 12]{Lev1} \label{minimal}
If $\delta$ is an irreducible symmetric polynomial and $(V,b,T)$ has characteristic polynomial $\delta^k$ for some
 positive $k$, then  $(V,b,T)$ is metabolic or concordant to an isometric structure with minimal polynomial $\delta$.
 \end{proposition}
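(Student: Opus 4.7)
The plan is to induct on the exponent $m\ge 0$ of the minimal polynomial $\delta^m$ of $T$; this polynomial is a power of $\delta$ because $T$ is annihilated by its characteristic polynomial $\delta^k$. When $m\le 1$ there is nothing to prove, so assume $m\ge 2$.

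Set $W=\im\delta(T)^{m-1}$, a nonzero proper $T$-invariant subspace of $V$. The crucial claim is that $W$ is totally isotropic and that $W^\perp=\ker\delta(T)^{m-1}$. Both follow from the identity
$$
b\bigl(\delta(T)^{j}x,y\bigr)=\epsilon^{j}\,b\bigl(x,T^{-j\deg\delta}\delta(T)^{j}y\bigr),\qquad \epsilon=\pm1,
$$
which in turn comes from (\ref{isome}), the invertibility of $T$, and the symmetry $\bar\delta=\epsilon\delta$: taking $j=m-1$ and using the non-degeneracy of $b$ gives $W^\perp=\ker\delta(T)^{m-1}$, and since $\delta(T)^{m}=0$ we have $W\subset\ker\delta(T)\subset\ker\delta(T)^{m-1}=W^\perp$.

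Form the quotient $V'=W^\perp/W$ with the induced bilinear form $b'$ (non-degenerate because $(W^\perp)^\perp=W$) and induced isometry $T'$. By construction $T'$ is annihilated by $\delta^{m-1}$, and since $\delta\ne t+1$ the triple $(V',b',T')$ is an admissible isometric structure. To see that $(V,b,T)$ is concordant to $(V',b',T')$, consider the diagonal
$$
\Gamma=\bigl\{(v,[v])\ :\ v\in W^\perp\bigr\}\subset V\oplus V'.
$$
It is $(T\oplus T')$-invariant, totally isotropic for $b\oplus(-b')$ since $b(v,v')-b'([v],[v'])=0$ for all $v,v'\in W^\perp$, and has dimension $\dim W^\perp=\tfrac12(\dim V+\dim V')$; hence $(V,b,T)\oplus(V',-b',T')$ is metabolic. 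The induction hypothesis applied to $(V',b',T')$ then completes the argument. The technical heart of the proof is the identity for $W^\perp$, which rests on the symmetry of $\delta$; granted this, the diagonal construction and the induction are routine.
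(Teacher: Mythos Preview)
Your argument is correct and is essentially Levine's original proof, which the paper cites without reproducing (and whose inductive step is sketched later in the proof of Lemma~\ref{lem.(t-1)}): set $W=\delta(T)^{m-1}V$, pass to $V'=W^\perp/W$, and iterate. Your explicit verification that $W^\perp=\ker\delta(T)^{m-1}$ via the symmetry of $\delta$, and the diagonal $\Gamma\subset V\oplus V'$ exhibiting the concordance, are exactly the right ingredients.
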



\subsection{Minimal isometric structures} \label{MinIS}

An isometric structure $(V,b,T)$ is called {\it minimal} if $\dim V$
is minimal in its concordance class.

\begin{lemma}\label{lem.min0}
Let $(V,b,T)$ be a minimal isometric structure.
Then $V$ does not have nontrivial isotropic $T$-invariant subspaces.
\end{lemma}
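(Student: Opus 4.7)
The plan is to argue by contrapositive: if $V$ contains a nontrivial $T$-invariant isotropic subspace $W$, then $(V,b,T)$ is concordant to an isometric structure of strictly smaller dimension, contradicting minimality.

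First I would perform the standard ``reduction'' of the form by the isotropic subspace $W$. Since $W$ is isotropic, $W\subseteq W^\perp$; since $T$ is an isometry, $W^\perp$ is $T$-invariant. The restriction $b|_{W^\perp}$ descends to a well-defined non-degenerate symmetric form $\bar b$ on the quotient $W^\perp/W$, and $T$ descends to an isometry $\bar T$. So $(W^\perp/W,\bar b,\bar T)$ is again an isometric structure. Its dimension equals $\dim V - 2\dim W < \dim V$ whenever $W\ne 0$.

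Next I would check admissibility of the quotient. Choose a basis adapted to the filtration $W\subset W^\perp\subset V$; because $T(W)\subseteq W$ and $T(W^\perp)\subseteq W^\perp$, the matrix of $T$ is block upper triangular, with diagonal blocks representing $T|_W$, $\bar T$, and the induced map on $V/W^\perp$. Hence $\Delta_{\bar T}$ divides $\Delta_T$, and so $\Delta_T(-1)\ne 0$ forces $\Delta_{\bar T}(-1)\ne 0$.

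The core step is exhibiting a metabolizer for $(V,b,T)\oplus(W^\perp/W,-\bar b,\bar T)$. Set
$$
   \Sigma \;=\; \{\,(v,\,v+W)\in V\oplus W^\perp/W \,:\, v\in W^\perp\,\}.
$$
Since $b$ is non-degenerate, $\dim W^\perp=\dim V-\dim W$, so $\dim\Sigma=\dim V-\dim W$, which is exactly half of $\dim V+\dim(W^\perp/W)=2(\dim V-\dim W)$. For $(v,v+W),(v',v'+W)\in\Sigma$, one computes
$$
   (b\oplus-\bar b)\bigl((v,v+W),(v',v'+W)\bigr)=b(v,v')-\bar b(v+W,v'+W)=0.
$$
Finally $\Sigma$ is $T\oplus\bar T$-invariant because $Tv\in W^\perp$ and $\bar T(v+W)=Tv+W$. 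Thus $\Sigma$ is a $T\oplus\bar T$-invariant metabolizer, and the two structures are concordant. Since $W$ is nontrivial, the quotient has strictly smaller dimension, contradicting the minimality of $(V,b,T)$.

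The only slightly subtle point is verifying that $\bar b$ on $W^\perp/W$ is non-degenerate; this is a standard ``symplectic reduction'' fact for non-degenerate symmetric forms, following immediately from $(W^\perp)^\perp = W$ combined with $W\subseteq W^\perp$. The rest consists of routine verifications, and the edge case $W=W^\perp$ simply yields that $(V,b,T)$ is itself metabolic, so minimality would force $\dim V=0$, again contradicting nontriviality of $W$.
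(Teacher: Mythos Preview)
Your argument is correct and complete. The quotient construction $W^\perp/W$ with the graph metabolizer $\Sigma=\{(v,v+W):v\in W^\perp\}$ is the standard Witt-reduction step, and all your verifications (non-degeneracy of $\bar b$, $T$-invariance of $W^\perp$, admissibility via divisibility of characteristic polynomials, half-dimensionality and isotropy of $\Sigma$) go through exactly as you say.

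The paper takes a different route. Instead of passing to the sub-quotient $W^\perp/W$, it shrinks $W$ to a cyclic $\F[t^{\pm1}]$-submodule with irreducible annihilator $\delta$, then uses the primary decomposition (Proposition~\ref{Levine.V_delta}) to locate a second cyclic submodule $W'\subset V_\delta$ with $b(w,w')\ne 0$, and shows that $b|_{W\oplus W'}$ is non-degenerate. This produces a genuine orthogonal splitting $V=(W\oplus W')\oplus(W\oplus W')^\perp$ inside $V$, with the first summand metabolic. Your approach avoids the primary decomposition entirely and is closer in spirit to Levine's own inductive reduction (the one invoked in the proof of Proposition~\ref{minimal}); it is shorter and more self-contained. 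The paper's approach buys an honest orthogonal summand rather than a sub-quotient, which can occasionally be convenient, but for the purpose of this lemma your argument is at least as clean.
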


\begin{proof}
Suppose that $b$ vanishes on a $T$-invariant subspace $W\ne\{0\}$.
Without loss of generality we suppose that $W$  is generated by some vector $w$,
as a $\F[t^{\pm1}]$-submodule of $V$, and that the minimal polynomial which
annihilates $w$ is an irreducible factor $\delta$ of $\Delta_T$.
Let us choose a vector $w' \in V_\delta \setminus W$ such that $b(w,w')\ne 0$.
Such a vector exists because otherwise $b$ would vanish on $V_\delta$,
which is impossible since $b$ is non-singular and $V_\delta$ is an orthogonal
summand of $V$. Let $W'$ be the submodule generated by $w'$.
By Proposition~\ref{Levine.V_delta}, the minimal polynomial of $w'$ is $\delta$.
Hence $W \cap W'= \{ 0 \}$.

Let us show that $b|_{W \oplus W'}$ is non-singular.
Suppose that $u+u'$ is orthogonal to $W \oplus W'$ for some $u\in W$ and $u'\in W'$.
Then $(u+u')\perp W$ and $u\perp W$ (because $b|_W=0$), hence $u'\perp W$.
Since $W'$ is cyclic and $\delta$ irreducible, $W'$ is generated by any its non-zero
element. Therefore, if $u'$ where non-zero, we would have $W'\perp W$, which
contradicts the condition $b(w,w')\ne 0$.
Thus $u'=0$ and hence $u=u+u'$ is orthogonal to $W'$. Since $W$ is also cyclic,
the same arguments show that $u=0$.

Thus $b|_{W \oplus W'}$ is non-singular, hence $V$
splits into the orthogonal sum of two $T$-invariant subspaces
$(W\oplus W')\oplus(W \oplus W')^\perp$. Moreover,
the restriction of $b$ to $W\oplus W'$ is metabolic,
which contradicts the minimality of $\dim V$ in the concordance class.
\end{proof}

We do not use the following fact, but it seems to be interesting by itself
and it is an almost immediate consequence from the previous lemma.

\begin{proposition}\label{lem.min1}
If two minimal isometric structures are concordant, then they are isomorphic.
\end{proposition}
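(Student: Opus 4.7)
The plan is to exploit a maximal isotropic $T$-invariant subspace $W$ of the direct sum $(V_1,b_1,T_1)\oplus(V_2,-b_2,T_2)$ and show that the two coordinate projections $\pi_i:W\to V_i$ are isomorphisms, which then yields the desired isometry $\phi=\pi_2\circ\pi_1^{-1}:V_1\to V_2$.

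First I would study $\ker\pi_1$. An element of the kernel has the form $(0,v_2)$ with $v_2$ ranging over some subspace $K\subset V_2$. Since $W$ is invariant under $T_1\oplus T_2$, the subspace $K$ is $T_2$-invariant; since $W$ is isotropic for $b_1\oplus(-b_2)$, the restriction of $-b_2$ (and hence of $b_2$) vanishes on $K$. Thus $K$ is an isotropic $T_2$-invariant subspace of the minimal structure $(V_2,b_2,T_2)$, so Lemma~\ref{lem.min0} forces $K=0$. The analogous argument (applied to $(V_1,b_1,T_1)$, which is minimal by hypothesis) gives $\ker\pi_2=0$. From $\ker\pi_1=0$ we get $\dim W\le\dim V_1$; from $\ker\pi_2=0$ we get $\dim W\le\dim V_2$. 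On the other hand $W$ is metabolizing, so $2\dim W=\dim V_1+\dim V_2$. Combining these forces $\dim V_1=\dim V_2=\dim W$, whence both projections are isomorphisms.

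Next I would verify that $\phi:=\pi_2\circ\pi_1^{-1}$ is an isomorphism of isometric structures. For the $T$-equivariance: given $v\in V_1$, the vector $(v,\phi(v))$ lies in $W$, so $(T_1v,T_2\phi(v))=(T_1\oplus T_2)(v,\phi(v))\in W$, and uniqueness of the second coordinate over $v$ yields $\phi(T_1v)=T_2\phi(v)$. For the isometry property: given $v,v'\in V_1$, the vectors $(v,\phi(v))$ and $(v',\phi(v'))$ both lie in $W$, so
\[
0=(b_1\oplus(-b_2))\bigl((v,\phi(v)),(v',\phi(v'))\bigr)=b_1(v,v')-b_2(\phi(v),\phi(v')),
\]
which is exactly the isometry condition. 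Hence $\phi$ exhibits $(V_1,b_1,T_1)\cong(V_2,b_2,T_2)$.

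No step looks like a serious obstacle: the only place where something could go wrong is the kernel-triviality argument, and that is precisely what Lemma~\ref{lem.min0} is designed to handle. Once the projections are known to be injective, the dimension count and the standard ``graph of an isomorphism'' identification do the rest, so the whole argument should be short.
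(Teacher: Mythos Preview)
Your proof is correct and follows essentially the same approach as the paper: both arguments take the metabolizing subspace $W$, use Lemma~\ref{lem.min0} to show that the two projections $\pi_i:W\to V_i$ are injective (hence isomorphisms), and conclude that $\pi_2\circ\pi_1^{-1}$ (the paper writes $-\pi_2\circ\pi_1^{-1}$, which works equally well) is an isomorphism of isometric structures. The only cosmetic difference is that the paper observes $\dim V_1=\dim V_2$ at the outset from minimality, whereas you deduce it from the injectivity of both projections combined with the metabolic dimension count.
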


In other words, each class in $\mathcal{G}_\F^*$ contains a unique
minimal isometric structure up to isomorphism.

\begin{proof}
Let $(V_1,b_1,T_1)$ and $(V_2,b_2,T_2)$ be two concordant minimal isometric
structures and let $n=\dim V_1=\dim V_2$. Then
$(V,b,T):=(V_1\oplus V_2,\,b_1\oplus-b_2,T_1\oplus T_2)$ has a $T$-invariant
isotropic subspace $U$ of dimension $n$. Let $\pi_k:U\to V_k$, $k=1,2$, be the
restriction of the canonical projection $V\to V_k$. Then $\ker\pi_1$ is a
$T$-invariant isotropic subspace of $V_2$, hence $\ker\pi_1=0$ by
Lemma~\ref{lem.min0}, i.e., $\pi_1$ is an isomorphism of $\F$-vector spaces.
The same is true for $\pi_2$. Then it is straightforward to check that
$-\pi_2\circ\pi_1^{-1}$ is an isomorphism between the considered isomorphic structures.
\end{proof}


\section{Forms of coindex $1$}\label{sect.coind=1}
\subsection{The main idea}
Levine's theory \cite{Lev1} starts from the observation that if a Seifert matrix $A$
is metabolic then any totally isotropic subspace $U$ is $T$-invariant (following the
notations of \S\ref{isom}). Indeed $U$ and $T(U)$ are contained in the right orthogonal
complement $\transpose{U}_A$ of $U$, and then $T(U)=\transpose{U}_A=U$ since the condition
$\beta(A)=0$ (i.e. $A$ is metabolic) implies that the dimensions of all the three
subspaces coincide.

If $\beta(A)=1$, then the dimensions are no longer equal, but $U$ and $T(U)$ are
two hyperplanes in $\transpose{U}_A$, and $U_1=U\cap T(U)$ is a hyperplane in $U$.
Then we show that $U_2=T^2(U)\cap U$ is a hyperplane in $U_1$ etc.
In this way we construct a basis  whose first  elements generate $U$,
and the matrix of $T$ is the companion matrix of $\Delta_A$. Then the $T$-invariance
of $Q=A+\transpose{A}$  means that the entries of $Q$ verify 
 a system of linear equations which admits a unique solution.
We obtain that the isometric structure is concordant to a standard one,
determined uniquely by $\Delta_A$.
Unfortunately these arguments do not work in the case $\beta(A)>1$.

%
\subsection{Standard coindex-one matrices}\label{sect.def.coind1}
In this section $\F$ is a field of characteristic zero
(the most important case being $\F=\Q$).
Let $n \ge 0$ and let $P(t)$ be an antipalindromic polynomial
(see Section~\ref{sect.propor}) of degree $2n+1$ in $\mathbb{F}[t]$
such that $P(0)\ne 0$. Then $P$ can be written
$$
    P(t) = 
   - c_0 t^{2n+1} - c_1 t^{2n} - \dots -c_n t^{n+1} + c_n t^n + \dots + c_1 t + c_0,
  \qquad c_0\ne 0. 
$$
Consider the matrices
$$ 
 A_P= \begin{pmatrix}
                          0   &  \transpose{M}_{n+1} \\
                          M_n &  0
      \end{pmatrix}^{-1}
 \quad\text{ where }\quad
M_i= \begin{pmatrix}
 c_0 &  c_1   &   c_2  & \cdots &  c_{i-1} \\
     &  c_0   &   c_1  &  \dots &  c_{i-2} \\
     &        & \ddots & \ddots &  \vdots  \\
     &        &        &  c_0   &  c_1     \\
     &        &        &        &  c_0     \end{pmatrix}
$$
(here the $M_i$ are upper triangular and then
the matrix $A_P^{-1}$ is ``butterfly-shaped'').
It is easy to see that $\beta(A_P)=1$ (see Example~\ref{example.coind1}).

\begin{definition}\label{def.standard} The
{\it standard coindex-one matrix associated with $P$} is $A_P$. We denote it by $\std(P)$.
\end{definition}

Let $T_P$ be the companion matrix of $P$, i.e.,~the matrix of the multiplication by $t$
in the $\F[t]$-module $\F[t]/(P)$ in the basis $(1,t,\dots,t^{2n})$.
A straightforward computation shows that
$T_P(A_P^{-1})\transpose{{}}=A_P^{-1}\lower-2pt\hbox{\mathstrut}$, hence
$A_P^{-1}\transpose{A}_P = T_P$ and $(A_P+\transpose{A}_P,T_P)$ is the
corresponding isometric structure.

Since $T_P$ is the companion matrix of $P$, we have $\Delta_{T_P}(t)=P(t)/c_0$.
It is also clear that
$\det A_P^{-1}=\det M_n\det M_{n+1}=c_0^{2n+1}$.
Hence, by (\ref{DeltaA=det(A)DeltaT}), we obtain
\begin{equation}\label{Delta.AP}
               \Delta_{\std(P)}(t)=  \Delta_{A_P}(t) = c_0^{-2(n+1)} P(t).
\end{equation}

 \begin{proposition}\label{prop.coind1}
 Let $P$, $A_P$, $T_P$ be as above.
 Let $(Q,T_P)$ be an isometric structure
 such that the left upper $(n\times n)$-submatrix of $Q$ is zero.
 Then $Q=\lambda(A_P + \transpose{A}_P)$, $\lambda\in\F$.
 \end{proposition}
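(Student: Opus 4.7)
The strategy is to translate the matrix problem into counting symmetric linear functionals on the cyclic $\F[t]$-module $V = \F[t]/(P)$ that satisfy a vanishing condition, and then make a careful choice of basis so that the count becomes one-dimensional.

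First I would set up the framework. Let $b$ be the symmetric bilinear form on $V$ whose matrix in the standard basis $1, t, \ldots, t^{2n}$ is $Q$. Since $T_P$ acts as multiplication by $t$, and since $P(0) = c_0 \ne 0$ makes $t$ a unit in $V$, the isometry condition $\transpose{T}_P Q T_P = Q$ extends to $b(t^i x, t^i y) = b(x, y)$ for all $i \in \Z$. Define $\phi: V \to \F$ by $\phi(x) = b(1, x)$; then $b(t^i, t^j) = \phi(t^{j-i})$, so $b$ is entirely determined by the scalars $\phi(t^k)$, $k \in \Z$. The symmetry of $b$ becomes the relation $\phi(t^k) = \phi(t^{-k})$, while the hypothesis that the upper-left $n \times n$ block of $Q$ vanishes translates into $\phi(t^k) = 0$ for $|k| \le n-1$.

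The key observation is to work in the basis $t^{-n}, t^{-n+1}, \ldots, t^n$ of $V$, obtained from $1, t, \ldots, t^{2n}$ by multiplication by the unit $t^{-n}$. In this basis, $\phi$ is specified by the $2n+1$ scalars $\phi(t^k)$ for $-n \le k \le n$. The symmetry relations $\phi(t^k) = \phi(t^{-k})$ for $1 \le k \le n$ are $n$ manifestly independent conditions (each involves a distinct basis coefficient $\phi(t^{-k})$), leaving an $(n+1)$-dimensional space of symmetric forms parametrized by $\phi(1), \phi(t), \ldots, \phi(t^n)$. The $n$ further vanishing conditions $\phi(1) = \cdots = \phi(t^{n-1}) = 0$ then leave exactly one free parameter, namely $\phi(t^n)$, so the space of $Q$ satisfying the hypotheses of the proposition is at most one-dimensional.

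To finish I would exhibit $Q_0 := A_P + \transpose{A}_P$ as a nonzero element of this space. Symmetry is obvious, $T_P$-invariance is the content of the isometric structure $(A_P + \transpose{A}_P, T_P)$ introduced just before the statement, and a direct inversion of the butterfly matrix $A_P^{-1}$ gives
$$A_P = \begin{pmatrix} 0_{n \times (n+1)} & M_n^{-1} \\ (\transpose{M}_{n+1})^{-1} & 0_{(n+1) \times n} \end{pmatrix},$$
so both $A_P$ and $\transpose{A}_P$ have vanishing upper-left $n \times n$ block. Finally $Q_0 \ne 0$, since otherwise $A_P$ would be skew-symmetric, impossible for a non-singular matrix of odd size. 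The identity $Q = \lambda Q_0$ follows. I do not anticipate any serious obstacle; the only subtle point is to pass to the symmetric basis $t^{-n}, \ldots, t^n$, so that the symmetry and vanishing conditions become transparently transverse linear constraints instead of being entangled with the boundary relations coming from $P(T_P) = 0$.
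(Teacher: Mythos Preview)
Your proof is correct. Both you and the paper begin from the same observation: $T_P$-invariance forces $b(t^i,t^j)=\phi(t^{j-i})$ for a single linear functional $\phi$ on $V=\F[t]/(P)$ (equivalently, $Q$ is symmetric Toeplitz and hence determined by its first row). From there the paper stays in the standard basis $1,t,\dots,t^{2n}$, reduces to the $n+1$ unknown entries $x_0,\dots,x_n$ of the first row, and writes out explicitly the $n$ linear equations coming from the last column of the companion matrix; the triangular shape of that system makes it clear that $x_1,\dots,x_n$ are determined by $x_0$. You instead pass to the $\sigma$-symmetric basis $t^{-n},\dots,t^n$, where the symmetry condition $\phi=\phi\circ\sigma$ and the vanishing condition become constraints on disjoint sets of coordinates, so the dimension count is immediate without ever touching the coefficients of $P$. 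Your route is more conceptual and sidesteps the boundary relations from $P(T_P)=0$ entirely; the paper's route is more hands-on and makes the dependence on the $c_i$ visible. Both arguments are elementary and of comparable length.
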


 \begin{proof}
 Without loss of generality we may assume that $c_0=1$. Then the last column of $T_P$ is
 $(1,c_1,\dots,c_n,-c_n,\dots,-c_1)\transpose{}$.
 We have $\transpose{T}_P Q T_P = Q$. Hence $Q$ is a Toeplitz matrix
 (i.e., $Q_{i,j}=Q_{i+1,j+1}$ for each $i,j<2n+1$) with the first line
 starting with $n$ zeros: $(0,\dots,0,x_0,x_1,\dots,x_n)$.
 The first $n$ entries of the last column of 
 $Q - \transpose{T}_P Q T_P$ provide the following equations,
 which allow us to express $x_1,\dots,x_n$ via $x_0$:
\begin{align*}
c_n x_0 + c_{n-1} x_1 + \dots + c_2 x_{n-2} + c_1 x_{n-1} + x_n\;\;\;   &= 0,\\
          c_{n-1} x_0 + \dots + c_2 x_{n-3} + c_1 x_{n-2} + x_{n-1}     &= 0,\\
                        \vdots \qquad \qquad & \\
                                c_2 x_0\;\;  +\; c_1 x_1\;\; +\; x_2\;\;\; &= 0,\\ 
                  -x_0 \qquad\qquad\qquad    +\; c_1 x_0\;\; +\; x_1\;\;\; &= 0.
\end{align*}
This means that $Q$ is determined by $P$ up to a scalar factor whence the result.
\end{proof}


\subsection{The main result}

If $A$ is a matrix of odd dimension, then
\eqref{sym.Delta} implies that $\Delta_A$ is antipalindromic of odd degree
and hence the matrix $\std(\Delta_A)$ is well defined. 
Moreover, the parity of degree and the property to be (anti)palindromic are evidently
invariant under the Fox-Milnor equivalence, hence $\std(P)$ is well defined if $P\eFM\Delta_A$.

\begin{theorem} \label{coindex1}
Let $A$ be an admissible matrix of odd dimension such that $\Delta_A(-1) \neq 0$ and
$\beta(A)=1$. Then for any polynomial $P\in\F[t]$ such that $\Delta_A(t)\eFM P(t)$,
the matrix $A$ is concordant to the standard coindex-one matrix $\std(P)$ associated with $P$.
\end{theorem}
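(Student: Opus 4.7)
The plan is to construct a basis of $V$ in which $T$ acts as the companion matrix of some polynomial $P_0$ and the top-left $n\times n$ block of the Gram matrix of $b$ vanishes, so that Proposition \ref{prop.coind1} pins down the Gram matrix up to a scalar and identifies the isometric structure with that of $\lambda\,\std(P_0)$ for some $\lambda\in\F^\times$. First I would reduce to $A$ nonsingular using Lemma \ref{lem.nonsing} and Proposition \ref{S=>conc}, and pass to the isometric structure $(V,b,T) = (A+\transpose{A}, A^{-1}\transpose{A})$ via Proposition \ref{prop.isom}. Write the dimension of $A$ as $N = 2n+1$; the hypothesis $\beta(A)=1$ provides a maximal $\theta$-isotropic subspace $U\subset V$ of dimension $n$. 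Since $T$ is a $\theta$-isometry and $\theta(u,Tv) = \theta(Tv,Tu) = \theta(v,u) = 0$ for all $u,v \in U$, one has $U + TU \subseteq U^\perp$, a subspace of dimension $n+1$.

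The geometric core is then to build a strictly decreasing flag
\[
 U = \mathcal{U}_0 \supsetneq \mathcal{U}_1 \supsetneq \cdots \supsetneq \mathcal{U}_n = 0,
 \qquad \mathcal{U}_k = \bigcap_{j=0}^{k} T^{j} U,
\]
with $\dim \mathcal{U}_k = n-k$, generalising the observation from the main idea that $\mathcal{U}_1 = U\cap TU$ has codimension $1$ in $U$. A nonzero $v\in\mathcal{U}_{n-1}$ satisfies $T^{-j}v\in U$ for $j = 0,\dots,n-1$, so setting $e_0 = T^{-(n-1)}v$ gives $e_0, Te_0,\dots,T^{n-1}e_0\in U$ spanning $U$, while $e_0, Te_0,\dots,T^{2n}e_0$ form a basis of $V$, i.e.\ $e_0$ is a cyclic vector for $T$. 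In this basis $T$ is the companion matrix $T_{P_0}$ of some antipalindromic polynomial $P_0\in\F[t]$ of degree $N$ (antipalindromic by (\ref{sym.Delta})), and $U$ being $b$-isotropic forces the top-left $n\times n$ block of the Gram matrix $Q$ of $b$ to vanish. Proposition \ref{prop.coind1} then yields $Q = \lambda(A_{P_0}+\transpose{A}_{P_0})$ for some $\lambda\in\F^\times$, so the inverse of Proposition \ref{prop.isom} identifies $(Q,T_{P_0})$ with the Seifert form $\lambda A_{P_0} = \lambda\,\std(P_0)$, whence $A$ is concordant to $\lambda\,\std(P_0)$.

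To pass from $\lambda\,\std(P_0)$ to any prescribed $P\eFM\Delta_A$, I would combine Lemma \ref{alexconc} (which gives $\Delta_A\eFM\Delta_{\lambda\,\std(P_0)}$) with the formula $\Delta_{\std(P_0)} = c_0^{-2(n+1)}P_0$ from (\ref{Delta.AP}) and invoke Lemma \ref{lem.propor} to absorb $\lambda$ into a square factor and replace $\lambda\,\std(P_0)$ by $\std(P)$. The main obstacle is the flag construction: the codimension-one drops $\dim\mathcal{U}_{k-1}/\mathcal{U}_k = 1$ and the existence of a cyclic vector $e_0\in\mathcal{U}_{n-1}$ are not automatic, because the degenerate case $TU = U$ can occur and $V$ need not be a cyclic $\F[t]$-module (e.g.\ when $A$ is a symmetric form, so $T = I$). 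I expect to handle this by first invoking the primary decomposition of Proposition \ref{Levine.V_delta} to split $V = \bigoplus_\delta V_\delta$ and $A = \bigoplus_\delta A_\delta$, then applying Proposition \ref{minimal} on each summand to reduce its minimal polynomial to $\delta$; the odd parity of $N$ combined with admissibility (which excludes $\delta = t+1$) then forces exactly one summand to carry the coindex-$1$ contribution, the others being metabolic or absorbable into $\std(P_0)$ by the companion-matrix construction performed separately on each factor.
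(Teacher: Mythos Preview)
Your overall strategy is exactly the paper's: reduce to a nonsingular matrix, pass to the isometric structure, build a flag inside a maximal isotropic $U$, extract a cyclic vector, and then use Proposition~\ref{prop.coind1} and Lemma~\ref{lem.propor}. Two points, however, are genuine gaps rather than sketches.

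\textbf{The flag and the cyclic vector.} You correctly identify that the strict drops $\dim\mathcal U_{k-1}/\mathcal U_k=1$ and the cyclicity of $e_0$ are not automatic, but your proposed fix (primary decomposition plus Proposition~\ref{minimal}) does not close the gap by itself: after reducing each $V_\delta$ to minimal polynomial $\delta$, the summand can still be $(\F[t]/\delta)^m$ with $m\ge2$, so $T$ need not be cyclic and the flag can still stabilise early. The paper's device is different: one first passes (via Lemma~\ref{concordance}) to a matrix of \emph{minimal size} in the concordance class, and then invokes Lemma~\ref{lem.min0}, which says that a minimal isometric structure has no nonzero isotropic $T$-invariant subspace. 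This is exactly what kills the stabilised $\mathcal U_k$ (it is isotropic and $T$-invariant, hence zero) and, in a second step, forces the $\F[t]$-span $W$ of your $e_0$ to be all of $V$: the paper still needs a short case analysis ($\dim W<2n$ contradicts Lemma~\ref{lem.min0} directly; $\dim W=2n$ forces an orthogonal metabolic summand, contradicting minimality). You should make Lemma~\ref{lem.min0} the hinge of the argument.

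\textbf{From $\std(P_0)$ to an arbitrary $P\eFM\Delta_A$.} Lemma~\ref{lem.propor} only lets you replace $\lambda\,\std(P_0)$ by $\std(P_0)$; it cannot change $P_0$ into a different Fox--Milnor representative $P$ (the two standard matrices may even have different sizes). The paper's trick is to first prove the special case $P=P_0$ squarefree (Lemma~\ref{part}) and then apply that lemma \emph{twice}: once to $A$ and once to $B=\std(P)$, both of which satisfy $\beta=1$ and $\Delta\eFM P_0$. Hence both are concordant to $\std(P_0)$, so $A$ is concordant to $\std(P)$.
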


The proof of Theorem \ref{coindex1} can be mainly brought back
to a particular case stated in the following lemma which will be proven below. 

\begin{lemma} \label{part}
Let $A$ be a matrix of odd dimension such that $\Delta_A(-1)\ne 0$.
Let $P= \prod_{i=1}^s\delta_i$ be a product of polynomials,
where the $\delta_i$'s are pairwise distinct symmetric irreducible.
Suppose that  $\Delta_A(t)\eFM P(t)$.
If $\beta(A)=1$ then $A$ is concordant to $\std(P)$. 
\end{lemma}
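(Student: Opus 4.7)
The plan is to combine the filtration argument outlined at the beginning of Section~\ref{sect.coind=1} with the $\delta$-primary decomposition of isometric structures. First, by Lemma~\ref{lem.nonsing} together with Proposition~\ref{S-equiv}, I may assume $A$ is nonsingular of size $2n+1$, and pass to the associated isometric structure $(V,b,T)=(A+\transpose{A},\,A^{-1}\transpose{A})$ via Proposition~\ref{prop.isom}. The hypothesis $\Delta_A(-1)\ne 0$ makes $b$ non-singular, and the relation $\transpose{A}=AT$ yields the identity $A(x,Ty)=A(y,x)$, which is the workhorse of what follows.

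Next, I would carry out the filtration construction. Let $U$ be a maximal $A$-isotropic subspace; since $\beta(A)=1$ one has $\dim U=n$. The identity above implies $T(U)\subset U^{\perp_A}$ and that $T(U)$ is again $A$-isotropic; as $\dim U^{\perp_A}=n+1$, both $U$ and $T(U)$ sit as hyperplanes in $U^{\perp_A}$, so $\dim(U\cap T(U))=n-1$ provided $T(U)\ne U$ (the alternative $T(U)=U$ would produce a $T$-invariant $b$-isotropic subspace of maximal dimension, which can be ruled out from $\Delta_T(-1)\ne 0$). Iterating, I would show by induction that each $U_k:=U\cap T(U)\cap\cdots\cap T^k(U)$ drops in dimension by exactly one, yielding a chain $U=U_0\supset\cdots\supset U_n=\{0\}$. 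Choosing a basis compatible with this chain and with the $T$-action exhibits $T$ as the companion matrix of its characteristic polynomial; hence $V\cong\F[t]/(\Delta_T)$ is $\F[t]$-cyclic, $U$ is spanned by the first $n$ basis vectors, and the upper-left $n\times n$ block of $b$ vanishes. Proposition~\ref{prop.coind1} then gives $b=\lambda\,(A_{\Delta_A}+\transpose{A}_{\Delta_A})$ for some $\lambda\in\F^\times$, so $A$ is congruent to $\lambda\,\std(\Delta_A)$.

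To finish, I must replace $\std(\Delta_A)$ by $\std(P)$. Writing $\Delta_A\eFM P$ as $F\bar F\,\Delta_A=G\bar G\,P$, I would decompose the isometric structure of $\std(\Delta_A)$ into $\delta$-primary summands via Proposition~\ref{Levine.V_delta}; for each symmetric irreducible $\delta$ not dividing $P$, the multiplicity of $\delta$ in $\Delta_A$ is even, and Proposition~\ref{minimal} combined with the $\beta=1$ constraint should show that the corresponding summand is concordant to the metabolic class. The surviving part, with each $\delta_i\mid P$ contributing exactly a single copy of $\F[t]/\delta_i$, recovers $\std(P)$ up to a scalar, and Lemma~\ref{lem.propor} absorbs that scalar to conclude $A\sim\std(P)$.

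The main obstacle is the inductive dimension count $\dim U_k=n-k$ in the filtration step: the first drop is a direct rank calculation, but justifying that subsequent iterations also drop by exactly one requires a careful analysis of $T$'s interaction with both $A$ and $b$ (for which $T$ is an isometry), using that $\Delta_T(-1)\ne 0$ rules out certain pathological $T$-invariant degeneracies. A secondary difficulty is the last step -- showing that the extra $\delta$-primary summands of $\std(\Delta_A)$ are metabolic -- which seems to require exploiting the squarefree structure of $P$ together with $\beta=1$ via some form of subadditivity of the coindex across $\delta$-components.
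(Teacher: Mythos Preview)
Your filtration strategy is exactly the right one, and your definition $U_k=U\cap T(U)\cap\cdots\cap T^k(U)$ coincides with the paper's recursive definition $U_{i+1}=T(U_i)\cap U$. However, there is a genuine gap: you never pass to a \emph{minimal} representative in the concordance class, and without that the argument breaks at several points. The condition $\Delta_T(-1)\ne 0$ does \emph{not} exclude nontrivial $T$-invariant $b$-isotropic subspaces; for instance, if the structure $(V,b,T)$ splits off a metabolic summand (which can certainly happen if $A$ is not minimal), such a subspace exists. Thus your justification for $T(U)\ne U$ fails, the chain can stabilize before reaching $\{0\}$, and even if you do get a basis of $U$ adapted to the filtration, the cyclic module $W$ it generates need not be all of $V$. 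The paper handles all three of these issues by first replacing $A$ by a minimal representative (via Lemma~\ref{concordance}) and then invoking Lemma~\ref{lem.min0}, which says precisely that a minimal isometric structure has no nonzero $T$-invariant isotropic subspace.

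A second consequence of this omission is that your final ``stripping'' step is unnecessary and, as you suspect, not easy to justify directly. In the paper's approach, after reducing to a minimal structure one uses Propositions~\ref{Levine.V_delta} and~\ref{minimal} to see that each $V_{\delta_i}=\ker\delta_i(T)$; combined with cyclicity of $V$, this forces the characteristic polynomial of $T$ to be proportional to the squarefree $P$ rather than to the original $\Delta_A$. So Proposition~\ref{prop.coind1} yields $A$ concordant to $\mu\,\std(P)$ directly, and Lemma~\ref{lem.propor} finishes. Your proposed route through $\std(\Delta_A)$ would require showing its extra $\delta$-primary summands are metabolic, which amounts to reproving the minimality reduction you skipped.
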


\begin{proof}[Proof of Theorem \ref{coindex1}]
Let $A$ and $P$ be as in Theorem \ref{coindex1}. It is clear that $\Delta_A\eFM P_0$ where
$P_0$ is a product of pairwise distinct symmetric polynomials
(because any symmetric polynomial $F$ with $F(0)\ne 0$ is Fox-Milnor equivalent to such a product).
Let $B=\std(P)$. By (\ref{Delta.AP}) we have $\Delta_B\eFM P$, hence
$\Delta_B\eFM P\eFM\Delta_A\eFM P_0$. Thus $\Delta_A\eFM P_0$ and $\Delta_B\eFM P_0$.
Moreover, $\beta(A)=\beta(B)=1$, hence both matrices $A$ and $B$ are concordant to
$\std(P_0)$ by Lemma~\ref{part} and thus $A$ is concordant to $B=\std(P)$.
\end{proof}
 
In order to prove Lemma \ref{part}, we use the following Lemma \ref{concordance},
which states that if Lemma \ref{part} holds for a nonsingular matrix $A$ then it holds
for any nonsingular matrix concordant to $A$. 

\begin{lemma} \label{concordance}
Let $A$ and $B$ be nonsingular matrices such that $\Delta_A\eFM P$ and $\Delta_B\eFM Q$
where $P$ and $Q$ are products of pairwise distinct symmetric irreducible polynomials.
If $B$ is concordant to $A$ and $A$ is concordant to $\std(P)$, then $B$ is concordant to $\std(Q)$.
\end{lemma}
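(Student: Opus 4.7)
The plan is to reduce the statement, by transitivity of concordance (Proposition~\ref{equiv}), to the single claim that the two standard matrices $\std(P)$ and $\std(Q)$ are themselves concordant; combined with the hypotheses $B\sim A$ and $A\sim\std(P)$, this will immediately give $B\sim\std(Q)$.

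To compare $P$ and $Q$, first note that $A\sim B$ combined with Lemma~\ref{alexconc} gives $\Delta_A\eFM\Delta_B$, so $P\eFM\Delta_A\eFM\Delta_B\eFM Q$. Write a Fox--Milnor witness $F\bar F P=G\bar G Q$ in $\F[t]$ and count multiplicities at each irreducible $\pi$. If $\pi$ is symmetric, then $\bar\pi$ is a unit multiple of $\pi$, so $\pi$ appears with even multiplicity in both $F\bar F$ and $G\bar G$; since $P$ and $Q$ are squarefree in their symmetric factors, parity forces $\mathrm{mult}_\pi(P)=\mathrm{mult}_\pi(Q)\in\{0,1\}$. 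If $\pi$ is non-symmetric it appears in neither $P$ nor $Q$. Thus $P=\mu Q$ for some $\mu\in\F^\times$, and substituting back yields $\mu F\bar F=G\bar G$ in $\F[t]$. After dividing out the matched even power of $(t-1)$ that appears on both sides (writing $F=(t-1)^kF_0$ and using $\bar F=(-1)^k(t-1)^k\bar F_0$, and similarly for $G$), evaluating at $t=1$ gives $\mu=(G_0(1)/F_0(1))^2$, so $\mu=a^2$ for some $a\in\F^\times$.

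To finish, observe that the construction $\std$ is linear in the coefficients of its argument: scaling $P$ by $\mu$ scales $M_n$ and $M_{n+1}$ by $\mu$, hence $A_P^{-1}$ by $\mu$ and $\std(P)=A_P$ by $\mu^{-1}$. Therefore $\std(P)=\mu^{-1}\std(Q)=a^{-2}\std(Q)$, and the identity $\transpose{(a^{-1}I)}\,\std(Q)\,(a^{-1}I)=a^{-2}\std(Q)=\std(P)$ shows that $\std(P)$ and $\std(Q)$ are congruent. Congruent matrices are concordant by Proposition~\ref{equiv}, so $\std(P)\sim\std(Q)$, and transitivity closes the chain $B\sim A\sim\std(P)\sim\std(Q)$.

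The main obstacle is the square-extraction in the second step: pinning down that Fox--Milnor equivalence between two squarefree products of symmetric irreducibles forces their ratio to be a square in $\F^\times$, with some care required around the zero set of $F$ and $G$ at $t=1$. Everything after that is essentially bookkeeping encoded in the explicit scaling behaviour of the matrix $A_P$ under scalar rescaling of $P$.
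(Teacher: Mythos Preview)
Your proof is correct and follows essentially the same route as the paper's: both show $P\eFM Q$ via Lemma~\ref{alexconc}, deduce that $P$ and $Q$ differ by a square in $\F^\times$, translate this into a congruence between $\std(P)$ and $\std(Q)$, and conclude by transitivity. The paper asserts the key step ``$Q=c^2P$'' in one sentence without justification; you supply the argument (multiplicity count plus evaluation at $t=1$ after clearing powers of $t-1$), which is exactly what is needed to fill that gap.
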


\begin{proof}
Since $A$ and $B$ are concordant and nonsingular, one has $\Delta_A\eFM \Delta_B$
by Lemma \ref{alexconc}.  Hence $P\eFM\Delta_A\eFM\Delta_B\eFM Q$. Since $P$ and $Q$ are
products of pairwise distinct symmetric polynomials, it follows that there exists
$c\in \F^\times$ such that $Q(t) = c^2 P(t)$, whence $\std(Q)=c^2 \std(P)$.
In particular, $\std(P)$ and $\std(Q)$ are concordant and the result follows.
\end{proof}

\begin{proof}[Proof of Lemma \ref{part}]
Since $A$ is admissible,  $A$ is S-equivalent to a non-singular matrix $\bar{A}$ by
Lemma~\ref{lem.nonsing}. Then $\Delta_A=\Delta_{\bar{A}}$ (see \S\ref{sect.alex}) and $A$
is concordant to $\bar A$, hence the concordance between $\bar A$ and $\std(\Delta_{\bar A})$
implies the concordance between $A$ and $\std(\Delta_A)$. Due to this observation, we assume
from now on that $A$ is nonsingular. By Lemma \ref{concordance}, we may also assume
that the size of $A$ is minimal in its concordance class.

Let $2n+1$ be the size of $A$ and $(V,b,T)$ be the isometric structure related to $A$, i.e.,
$V=\F^{2n+1}$, $T=A^{-1}\transpose{A}$, and $b(x,y)=\transpose{x}(A+\transpose{A})y$.
Since the size of $A$ is minimal, the isometric structure is minimal.
Then it decomposes according to the irreducible factors of $\Delta_A$ as
$V=\bigoplus_{i=1}^s V_{\delta_i}$, where the $\delta_i$ are symmetric and irreducible
and, moreover, $V_{\delta_i}=\ker\delta_i(T)$ for all $i$
(see Propositions \ref{Levine.V_delta} and \ref{minimal}).
Let $U$ be an $n$-dimensional totally isotropic subspace of $V$.
For all $x$ and $y$ in $U$, we have
$$
     \transpose{y}\! A \, (Tx)= \transpose{y}\!
     A(A^{-1}\transpose{A}) \, x= \transpose{y}\!\transpose{A}
     x= (\transpose{x}\! A y\transpose{)}=0.
$$
Hence $T(U)$ is a subspace of the right orthogonal
supplementary space of $U$ with respect to $A$:
$$
       T(U)\subset U_{A}^\perp= \{ z \in V \mid
       \transpose{y}\! A \, z=0 \text{ for any } y \in U \}.
$$
Let us set inductively:
\begin{equation}                                           \label{0}
                  U_0=U, \quad U_{i+1}=T(U_i) \cap U.
\end{equation}
We have $U_1\subset U_0$ and, for any $i\ge 1$, the condition $U_i\subset U_{i-1}$
implies
$$
         U_{i+1}  =  T(U_i)\cap U  \subset  T(U_{i-1})\cap U  =  U_i.
$$
Hence, by induction, we have $U=U_0 \supset U_1 \supset U_2 \supset \dots$.
Let $k$ be such that
$$
            n= \dim U_0 > \dim U_1 > \dots > \dim U_k= \dim U_{k+1}.
$$
The fact that $A$ is non-singular implies that $\dim U^{\perp}_A= \dim V- \dim U=n+1$.
This fact combined with $U \subset U^\perp_{A}$, $T(U) \subset U^\perp_A$,
and $\dim U= \dim T(U)=n$ yields
\begin{equation}                                              \label{1}
                     \dim U_1= \dim(U \cap T(U)) \geq n-1.
\end{equation}
Since $T$ induces an isomorphism $U_{i-1}/U_i \rightarrow T(U_{i-1})/T(U_i)$,
we have also
\begin{equation}                                             \label{2}
      \dim U_{i-1}/U_i
    \geq \dim\big(T(U_{i-1}) \cap U\big)/\big(T(U_i)\cap U)
      =\dim U_i/U_{i+1}.
\end{equation}
By combining (\ref{1}) and (\ref{2}), we obtain
\begin{equation}                                              \label{3}
    \dim U_i = n-i \ \ \text{ for } \ \
                0 \leq i \leq k.
\end{equation}
By definition, $U_{k+1}\subset T(U_k)$ and $\dim U_{k+1} = \dim U_k = \dim T(U_k)$,
hence $U_{k+1}=T(U_k)$. We also have $U_{k+1}=U_k$. Thus 
$U_k$ is $T$-invariant whence $U_k=\{0\}$ by Lemma~\ref{lem.min0}. Hence
$k=n$ by (\ref{3}). Choose $u_n \in U_{n-1}\setminus\{0\}$ and set
$$
     u_i= T^{-(n-i)}(u_n) \ \ \text{ for } \ \ i=1,\dots ,n-1.
$$
By (\ref{0}) we have $T^{-1}(U_{i+1}) = U_i\cap T^{-1}(U)\subset U_i$ whence,
by induction,
$u_i \in U_{i-1} \setminus U_i$ for $ i=1,\dots ,n$. Hence $(u_1,\dots,u_n)$
is basis of $U$.
 
Let $W$ be the $\F[t^{\pm 1}]$-submodule of $V$ generated by $u_1$.
Let us show that $\dim W=2n +1$.

\smallskip
\emph{Case 1.} Suppose that $\dim W = m < 2n$.
Since $b|_U=0$ we have $b(u_n,u_i)=0$ for $i=1,\dots,n$. If $n<i\le m$, then
$1< i-n+1 \le n$, hence
$$
    b(u_n,u_i) = b(T^{n-1}u_1,T^{n-1}u_{i-n+1}) = b(u_1,u_{i-n+1}) = 0.
$$
Thus $b(u_n,w)=0$ for each $w\in W$. Since $W$ is generated by $u_n$ as
an $\F[t^{\pm1}]$-module, it follows that $b|_W=0$, which contradicts
Lemma~\ref{lem.min0}.

\smallskip
\emph{Case 2.} Suppose that $\dim W = 2n$.
There is an orthogonal sum decomposition
$W=\oplus_i W_{\delta_i}$ where $W_{\delta_i} = W\cap V_{\delta_i}$.
The module $W$ is cyclic, generated by the single element $u_1$.
Hence for all $i=1,\dots,s$, $W_{\delta_i}=\ker\delta_i(T|_W)$ is cyclic as well and 
$\dim W_{\delta_i} = \deg\delta_i$. Since $\delta_i$ is symmetric for all $i \neq i_0$,
it follows that $\deg \delta_i$ is even for all $i \neq i_0$. Hence $t-1$ is the only
odd degree factor of $\Delta_A$ and then of $Q$. From the hypothesis $\dim W=2n$
we obtain $W_{t-1}=0$, and then the orthogonal decomposition $V=W\oplus V_{t-1}$.
The restriction $A|_W$ is metabolic, hence $b|_W$ is metabolic, which contradicts the
minimality of $\dim V$ in the concordance class of $(V,b,T)$.

We have then shown that $\dim W = 2n+1$. Hence $V=W$ and $(u_1,\dots,u_{2n+1})$ is a
basis of $V$. In this basis, the matrix of $T$ is the companion matrix of $\Delta_T$
and the matrix of $b$ has a zero $n\times n$ submatrix at the left upper corner.
Since the $\F[t]$-module $V$ is cyclic, the characteristic polynomial of $T$ coincides
with its minimal polynomial, proportional to $P$.
Then by Proposition~\ref{prop.coind1} the matrix $A+\transpose{A}$ is proportional to
$B+\transpose{B}$, where $B:=\std P$. Hence $B=\mu A$ with $\mu\in\F^\times$. We also have
$\Delta_B\eFM P\eFM\Delta_A$ (see (\ref{Delta.AP}) for the first equivalence),
thus $A$ is concordant to $B=\std(P)$ by Lemma~\ref{lem.propor}.
\end{proof}


\subsection{The $(t-1)$-primary component}\label{sect.t-1}
By Theorem~\ref{coindex1}, determining whether $\beta(A)=1$ or not is reduced to 
decide if $A$ is concordant to a given standard matrix (associated with $\Delta_A$).
This can be done by comparing their isometric structures.
By Proposition~\ref{Levine.V_delta}, two isometric structures are concordant if
and only if all their primary components are concordant.
The study of the $(t-1)$-primary component could \emph{a priori} be sufficient in
some cases to prove that $\beta(A)>1$, see Example~\ref{ex.(t-1)}. However
(see the discussion after Example~\ref{ex.(t-1)}), the most interesting case
would be when $A$ is a Seifert matrix of a two-component link. The following
proposition shows in fact that the $(t-1)$-primary component
does not give any restriction in this case. 

\begin{proposition} \label{prop.linking} Let $L=k_1\cup k_2$ be a two-component
link such that $\Delta_L(-1)\ne 1$. Let $A_L$ be a Seifert matrix of $L$ and
$B_L=\std(\Delta_L)$ be the standard coindex-one matrix associated with $\Delta_L(t)$.
Then the $(t-1)$-primary components of the isometric structures corresponding
to $A_L$ and $B_L$ are concordant.
\end{proposition}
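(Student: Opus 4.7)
The plan is to reduce the problem, via Propositions~\ref{Levine.V_delta}, \ref{minimal}, and \ref{lem.min1}, to a comparison of minimal isometric structures restricted to the $(t-1)$-primary component. On such a minimal structure, $T|_{V_{t-1}} = I$, and concordance of the $(t-1)$-primary components becomes Witt equivalence of symmetric bilinear forms over $\F$.

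The key geometric input is that, for a connected Seifert surface $F$ of the 2-component link $L$, the intersection form on $H_1(F;\F)$ has a 1-dimensional radical, spanned by a boundary-parallel cycle. After S-equivalence to a nonsingular Seifert matrix (Lemma~\ref{lem.nonsing}), this translates to $\dim\ker(T-I)=1$ for $T = A_L^{-1}\transpose{A_L}$. Hence $V_{t-1}$ is a cyclic $\F[t]$-module isomorphic to $\F[t]/(t-1)^k$, where $k$ is the $(t-1)$-multiplicity in $\Delta_L$; note $k$ is odd since $\Delta_L$ is antipalindromic. The same cyclicity holds for $\std(\Delta_L)$, since its isometry $T_P$ is the companion matrix of $\Delta_L$ and hence itself cyclic.

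I would then prove that any cyclic isometric structure on $\F[t]/(t-1)^k$ with $k$ odd is concordant to a 1-dimensional structure with $T = I$. The key observation is that $\ker(T-I)$ is $T$-invariant and isotropic: on a cyclic structure, the $b$-orthogonal of $\ker(T-I)^j$ equals $\ker(T-I)^{k-j}$, so $\ker(T-I)\subset\ker(T-I)^{k-1}$ is isotropic whenever $k\ge 2$. One then exhibits an explicit concordance between the full cyclic structure of dimension $k$ and the quotient $\ker(T-I)^{k-1}/\ker(T-I)$, which is cyclic of dimension $k-2$, by taking the $T$-invariant diagonal-image subspace of dimension $k-1$ inside the sum (it is isotropic because the induced form descends to the quotient). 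Iterating this $(k-1)/2$ times yields a 1-dimensional form whose Witt class is represented by $\lambda := b(v,(T-I)^{k-1}v)\in\F^\times$, up to a fixed sign $(-1)^{(k-1)/2}$, for a generator $v$ of the cyclic module.

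The hardest part will be to check that the two scalars $\lambda_A$ (for $A_L$) and $\lambda_B$ (for $\std(\Delta_L)$) agree modulo $(\F^\times)^2$. For $\std(\Delta_L)$ this is a direct computation from Definition~\ref{def.standard} and Proposition~\ref{prop.coind1}: up to a square, $\lambda_B$ equals the value $R(1)$ of $R(t) := \Delta_L(t)/(t-1)^k$ at $t=1$, i.e.\ the leading coefficient of $\Delta_L$ at the eigenvalue $1$. For $A_L$, the cyclic generator of $V_{t-1}$ maps under $(T-I)^{k-1}$ into the radical of the intersection form, which is spanned by the boundary class $[k_1]$; expressing $\lambda_A$ in terms of $b([k_1],\,\cdot\,) = (A_L + \transpose{A_L})([k_1],\,\cdot\,)$ identifies $\lambda_A$, again up to a square, with a linking-number quantity coinciding with $R(1)$ (recall that $\Delta_L(t)/(t-1)$ evaluated at $t=1$ recovers, up to sign and a power of $2$, the linking number $\lk(k_1,k_2)$). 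Since both $\lambda_A$ and $\lambda_B$ equal $R(1)$ modulo squares, the two $(t-1)$-primary components are concordant.
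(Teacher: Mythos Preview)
Your overall strategy coincides with the paper's: both arguments rest on the observation that $\dim\ker(A_L-\transpose{A}_L)=1$ (because $L$ has two components) and $\dim\ker(B_L-\transpose{B}_L)=1$ (because $T_P$ is cyclic), so that in each case the $(t-1)$-primary component is a cyclic $\F[t]/(t-1)^k$-module; and both then reduce this cyclic structure, step by step, to a one-dimensional form with $T=\id$. Your iterated reduction from dimension $k$ to $k-2$ is exactly the content of the paper's Lemma~\ref{lem.(t-1)} (which invokes Levine's inductive process from Proposition~\ref{minimal}).

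The gap is in your ``hardest part''. Your argument that $\lambda_A\equiv R(1)$ modulo squares relies on identifying the generator of $\ker(T-I)$ with the boundary class $[k_1]$ and then invoking the linking-number interpretation of $\Delta_L(t)/(t-1)\big|_{t=1}$. But this last formula only recovers $\lk(k_1,k_2)$ when $k=1$; for $k\ge 3$ it gives $0$, not $R(1)$, and your sketch does not explain why $b([k_1],v)$ should equal $R(1)$ up to a square for a cyclic generator $v$ in that case. (There is also a minor issue: after S-equivalence to a nonsingular matrix, the identification of $\ker(T-I)$ with $\langle[k_1]\rangle$ is no longer literal.) The paper sidesteps this computation entirely: once both matrices are concordant to matrices $A'$, $B'$ with one-dimensional $(t-1)$-component (Lemma~\ref{lem.(t-1)}), Proposition~\ref{prop.(t-1)} gives the respective values as $-2\Delta'_{A'}(1)$ and $-2\Delta'_{B'}(1)$; since $\Delta_{A'}\eFM\Delta_L\eFM\Delta_{B'}$ by Lemma~\ref{alexconc}, and since Fox--Milnor equivalence of polynomials with a simple root at $1$ forces $\Delta'(1)$ to agree modulo squares (write $F\bar F\Delta_{A'}=G\bar G\Delta_{B'}$, match $(t-1)$-orders, and evaluate at $t=1$), the two values coincide in $\F^\times/(\F^\times)^2$. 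This Fox--Milnor argument is what is missing from your sketch; it would in fact also prove your formula $\lambda\equiv(-1)^{(k-1)/2}R(1)$, but not via the boundary-class route you propose.
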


The rest of the section is devoted to the proof of Proposition \ref{prop.linking}.
 We need two preliminary results.

\begin{proposition}\label{prop.(t-1)}
Let $A$ be a matrix such that $\Delta_A(-1)\ne 0$. If $1$ is a simple root of
$\Delta_A(t)$. Then the $(t-1)$-primary component of the isometric structure
of $A$ is concordant to  $(V,b,\id_V)$, where $\dim V=1$ and the value of $b$
on a generator of $V$ is $-2\Delta'_A(1)$.
\end{proposition}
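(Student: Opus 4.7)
The plan is to reduce to the case when $A$ is nonsingular via Lemma~\ref{lem.nonsing} (which preserves both $\Delta_A$ and the concordance class of the associated isometric structure), and then to compute the value of the form on a generator of the $(t-1)$-primary component directly. Since $1$ is a simple root of $\Delta_A$, by (\ref{DeltaA=det(A)DeltaT}) it is also a simple root of $\Delta_T$, so $V_{t-1}=\ker(T-I)^m$ collapses to $\ker(T-I)=\ker(A-\transpose{A})$, is one-dimensional, and $T$ restricts to the identity on it. I pick a generator $v$ of $\ker N$, where $N:=A-\transpose{A}$; skew-symmetry of $N$ also gives $\transpose{v}N=0$, and the quantity to evaluate is $b(v,v)=\transpose{v}(A+\transpose{A})v=2\transpose{v}Av$.

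The heart of the proof is to relate $\transpose{v}Av$ to $\Delta_A'(1)$ via an adjugate computation. Choose a basis whose first vector is $v$; in this basis $N$ takes the block form $\left(\begin{smallmatrix}0&0\\0&N'\end{smallmatrix}\right)$ with $N'$ skew-symmetric of size $n-1$ and full rank (this also forces $n$ to be odd, consistent with $\Delta_A$ being antipalindromic and having $1$ as a simple root). Writing $\Delta_A(1+s)=\det(N-s\transpose{A})$ and expanding to first order in $s$, using $\det N=0$, yields $\Delta_A'(1)=-\operatorname{tr}(\operatorname{adj}(N)\,\transpose{A})$. A direct cofactor computation in the adapted basis shows that every entry of $\operatorname{adj}(N)$ vanishes except the $(1,1)$ one, which equals $\det N'$; so $\operatorname{adj}(N)=(\det N')\,v\transpose{v}$ and therefore
$$
\Delta_A'(1)=-(\det N')\,\transpose{v}Av, \qquad b(v,v)=-\frac{2\,\Delta_A'(1)}{\det N'}.
$$

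The crucial closing observation is that $N'$ is skew-symmetric of even size $n-1$, so $\det N'=\operatorname{Pf}(N')^2$ is a square in $\F^\times$. Consequently $b(v,v)\equiv -2\Delta_A'(1)\pmod{(\F^\times)^2}$, and one may even rescale the generator to $\operatorname{Pf}(N')\,v$ to obtain the exact equality $-2\Delta_A'(1)$. Two admissible one-dimensional isometric structures with $T=\id$ and form values $\alpha,\beta\in\F^\times$ are concordant precisely when $\alpha\beta^{-1}\in(\F^\times)^2$: in the sum $\langle\alpha,-\beta\rangle$ every one-dimensional subspace is automatically $T$-invariant, and an isotropic such subspace exists iff $\alpha\beta^{-1}$ is a square. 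This identifies $(V_{t-1},b|_{V_{t-1}},\id)$ with the isometric structure described in the statement.

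The main difficulty I anticipate is the adjugate computation together with the identification of its leading coefficient as a square. Without the Pfaffian identity --- which relies on the parity $n-1$ even, in turn forced by antipalindromicity of $\Delta_A$ --- one would only determine $b(v,v)$ up to an arbitrary nonzero scalar in $\F$, which would be insufficient to pin down the claimed square class.
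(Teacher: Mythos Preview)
Your proof is correct and reaches the same conclusion as the paper, but by a somewhat different route. The paper does not use the Jacobi/adjugate formula: instead it observes that $A$ itself (not merely $N$) is congruent to a block sum $(a)\oplus B$, whence $\Delta_A(t)=a(1-t)\Delta_B(t)$, $\Delta_A'(1)=-a\det(B-\transpose{B})$, and $b(v,v)=2a$; the Pfaffian of the even-sized skew-symmetric matrix $B-\transpose{B}$ then identifies the square class. Your adjugate computation replaces this congruence-splitting step (and thereby sidesteps the small justification it implicitly requires, namely that $\transpose{v}Av\ne 0$ so that $v$ can be split off as a $\theta$-orthogonal summand), at the price of a slightly longer calculation. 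Both arguments ultimately rest on the same key fact: the determinant of the even-sized skew-symmetric complement is a square in $\F^\times$.
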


Notice that if $k_1\cup k_2$ is a two-component link with Seifert matrix $A$,
then $\Delta'_A(1)=-\lk(k_1,k_2)$ (cf.~\cite{hoso}).

\begin{proof}
$A$ is congruent to $(a)\oplus B$ for some matrix $B$. Hence
$\Delta_A(t)=a(1-t)\Delta_B(t)$ and $\Delta'_A(1)=-a\Delta_B(1)=-a\det(B-\transpose{B})$.
It remains to note that the determinant of any skew-symmetric matrix is a square.
\end{proof}

\begin{lemma}\label{lem.(t-1)}
Let $A$ be a matrix with $\Delta_A(-1)\ne 0$ and
$\dim\ker(A-\transpose{A})=1$.
Then $A$ is concordant to a matrix such that the $(t-1)$-primary component
of its isometric structure is one-dimensional.
\end{lemma}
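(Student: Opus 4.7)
The plan is to first reduce to the case where $A$ is nonsingular: by Lemma~\ref{lem.nonsing} and Proposition~\ref{S=>conc}, $A$ is S-equivalent, hence concordant, to an admissible nonsingular matrix, and a routine check on the moves in~\eqref{def.S-equiv} shows that S-equivalence preserves $\dim\ker(A-\transpose A)$, so the hypothesis is inherited. Having done so I pass to the isometric structure $(V,b,T) = (A+\transpose A,\, A^{-1}\transpose A)$ provided by Proposition~\ref{prop.isom}. Since $T-I = A^{-1}(\transpose A - A)$, the eigenspace $\ker(T-I)$ has the same dimension as $\ker(A-\transpose A)$, namely $1$, so the action of $T$ on the $(t-1)$-primary summand $V_{t-1}$ is a single Jordan block; I denote its size by $m$.

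The core step is a linear-algebraic computation on this Jordan block. Set $N = T-I$. The relation $T^\ast = T^{-1}$ expressing that $T$ is a $b$-isometry (with $\ast$ the $b$-adjoint) gives $N^\ast = T^{-1}-I = -N\,T^{-1}$. Since $N$ and $T$ commute, $(N^\ast)^k = (-1)^k N^k T^{-k}$, and the standard identity $\ker(M)^\perp = \im(M^\ast)$ yields
$$
\ker(N^k)^\perp \;=\; \im\bigl((N^\ast)^k\bigr) \;=\; \im(N^k).
$$
On a single Jordan block of size $m$ one has $\dim\ker(N^k) = k$ and $\dim\im(N^k) = m-k$, so $\ker(N^k)$ is isotropic precisely when $k \le m/2$. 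The hypothesis $\Delta_A(-1)\ne 0$ forces $b$ to be non-degenerate on $V$, and by the orthogonal primary decomposition recalled just before Proposition~\ref{Levine.V_delta}, also on $V_{t-1}$. This rules out even $m$: taking $k = m/2$ would give $\ker(N^{m/2}) = \ker(N^{m/2})^\perp$, contradicting non-degeneracy.

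With $m$ odd I take $W = \ker(N^{(m-1)/2})$, a $T$-invariant isotropic subspace of dimension $(m-1)/2$ with orthogonal $W^\perp = \im(N^{(m-1)/2})$ of dimension $(m+1)/2$. The quotient $(W^\perp/W, \bar b, \bar T)$ is a non-degenerate isometric structure of dimension $1$, and $\bar T$ acts as the identity (any generator lifts to some $v$ with $Tv - v \in W$). A standard surgery then shows that $(V_{t-1}, b|_{V_{t-1}}, T|_{V_{t-1}})$ is concordant to $(W^\perp/W, \bar b, \bar T)$: the diagonal subspace $\{(x,\bar x) : x \in W^\perp\}$ of $V_{t-1} \oplus (W^\perp/W)$, with form $b \oplus (-\bar b)$, has dimension $(m+1)/2$, is $T$-invariant because $W^\perp$ is, and is isotropic because the two copies of $b(x,y)$ cancel. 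By the primary-component decomposition of Proposition~\ref{Levine.V_delta}, substituting the $(t-1)$-primary summand of $(V,b,T)$ by this one-dimensional structure yields an isometric structure concordant to $(V,b,T)$; converting back via the inverse $(Q,T)\mapsto Q(I+T)^{-1}$ of Proposition~\ref{prop.isom} produces the desired matrix $A'$, which is still admissible with $\Delta_{A'}(-1)\ne 0$ since no $(t+1)$-eigenvalue is introduced. I expect the main obstacle to be the identification $\ker(N^k)^\perp = \im(N^k)$ and the parity conclusion it implies; once these are in hand, the diagonal-Lagrangian surgery and the primary decomposition take care of the rest.
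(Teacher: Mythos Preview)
Your overall strategy is sound and in fact more direct than the paper's: the paper follows Levine's inductive reduction from Proposition~\ref{minimal}, peeling off two dimensions at a time via $V'=W^\perp/W$ with $W=(T-I)^{a-1}V$, and checks at each step that $\dim\ker(T'-I)=1$ is preserved; you instead jump straight to $W=\ker N^{(m-1)/2}$ and perform a single surgery. The adjoint identity $\ker(N^k)^\perp=\im(N^k)$ is correct and does most of the work.

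However, your parity argument has a genuine gap. You write that for $m$ even, $\ker(N^{m/2})=\ker(N^{m/2})^\perp$ ``contradicts non-degeneracy''. It does not: a subspace equal to its own orthogonal is a Lagrangian, and non-degenerate symmetric forms routinely admit Lagrangians (any hyperbolic form does). So as stated, nothing rules out even $m$.

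The conclusion $m$ odd is nevertheless true, and you have already set up the tools to prove it. From $(N^\ast)^{m-1}=(-1)^{m-1}N^{m-1}T^{-(m-1)}$ and symmetry of $b$ you get, for a Jordan basis $e_1,\dots,e_m$ with $Ne_i=e_{i-1}$,
\[
b(e_1,e_m)=b(N^{m-1}e_m,e_m)=(-1)^{m-1}\,b\bigl(e_m,\,N^{m-1}T^{-(m-1)}e_m\bigr)=(-1)^{m-1}\,b(e_m,e_1)=(-1)^{m-1}\,b(e_1,e_m),
\]
since $T^{-(m-1)}e_m=e_m+(\text{lower terms})$ and $N^{m-1}$ kills the lower terms. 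If $m$ were even this would force $b(e_1,e_m)=0$; but your identity at $k=1$ already gives $e_1^\perp=\im N=\langle e_1,\dots,e_{m-1}\rangle$, so $e_1$ would then be in the radical of $b$, contradicting non-degeneracy on $V_{t-1}$. This is the missing step; once you insert it, the rest of your argument (the isotropy of $\ker N^{(m-1)/2}$, the one-dimensional quotient, and the diagonal Lagrangian) goes through. The paper obtains the same parity conclusion by explicitly solving the $T$-invariance equations for the Gram matrix on the Jordan block and observing that the antidiagonal entries alternate in sign, which forces $m$ odd by symmetry of $Q$.
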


\begin{proof}
By Lemma~\ref{lem.nonsing} any admissible matrix is S-equivalent (over $\F$) to a
non-singular matrix. One easily shows that $\dim\ker(A-\transpose{A})$ is invariant
under S-equivalence (see, however, Remark~\ref{rem.(t-1)}).
Thus we may assume that $A$ is non-singular.

Let $(V,b,T)$ be the $(t-1)$-primary component of the corresponding isometric structure.
According to Proposition \ref{minimal}, $(V,b,T)$ is either metabolic or concordant to
an isometric structure with minimal polynomial $t-1$. This fact is proven in \cite{Lev1}
by induction: if the minimal polynomial of $T$ is $\delta^a$ with $a>1$, then it is shown
that $(V,b,T)$ is concordant to $(V',b',T')$ and $\dim V'<\dim V$, where
$V'=W^\perp/W$, $W=\delta(T)^{a-1}V$, and $b'$ and $T'$ are induced by $b$ and $T$.

In order to prove the lemma, it is enough to show that the condition $\dim\ker(T-I)=1$
is preserved during the inductive process. Indeed, we have $A(T-I)=\transpose{A}-A$,
hence $\dim\ker(T-I)=\dim\ker(A-\transpose{A})=1$. Therefore, the matrix of $T$ in a
suitable base $v_1,\dots,v_m$ of $V$ is a Jordan block, i.e., $T_{i,i}=T_{i,i+1}=1$
for all $i$. The $T$-invariance of $b$ yields a system of linear equations for the
entries of the Gram matrix $Q$ of $b$. By solving some of them, we obtain that $Q$
is lower triangular with respect to the secondary diagonal (i.e. $Q_{ij}=0$ when 
$i+j<m$) and the entries on that diagonal are $Q_{i,m-i}=(-1)^ic$ with $c\in\F$;
we may set $c=1$ because $Q$ is non-singular. Since $Q$ is symmetric, this fact implies,
in particular, that $m$, and hence the dimension of $A$, is odd.
Thus the subspaces $W$ and $W^\perp$ are spanned by $v_1$ and by $(v_1,\dots,v_{m-1})$
respectively. If $m>1$, then $\dim V'=\dim V-2$ and $T'$ is again a Jordan block, thus
$\dim\ker(T'-I)=1$.

By induction we reduce the lemma to the case where the minimal polynomial of $T$ is $t-1$
and the dimension of $\ker(T-I)$ is still equal to $1$. In this case the $(t-1)$-component
coincides with $\ker(T-I)$ and hence it is one-dimensional.
\end{proof}

\begin{remark} \label{rem.(t-1)}
In general the dimension of $\ker(A-\transpose{A})$ is not invariant under concordance.
For example, for $A=\left(\smallmatrix 0&1\\1&0\endsmallmatrix\right)$, we have
$A-\transpose{A}=0$ whereas $A$ is metabolic. Note that this matrix is realizable
as a Seifert matrix of the 3-component Hopf link $L$ where the orientation of one 
component is reversed, and we have $\chi_s(L)=1$.
\end{remark}

\begin{proof} [Proof of Proposition \ref{prop.linking}] In virtue of Proposition~\ref{prop.(t-1)}
and Lemma~\ref{lem.(t-1)}, it is enough to show that the kernels of $A_L-\transpose{A}_L$ and
$B_L-\transpose{B}_L$ are one-dimensional.

Indeed, the module $\F[t]/\Delta_L(t)$ is cyclic and every primary component is cyclic.
Then $\dim\ker(T-I)=1$, where $T=B_L^{-1}\transpose{B}_L$. Since
$\ker(B_L-\transpose{B}_L)=\ker(T-I)$, it follows that $\dim\ker(B_L-\transpose{B}_L)=1$.

Furthermore, $A_L-\transpose{A}_L$ is the matrix of the intersection form on a Seifert
surface of $L$. Its corank is $1$ because $L$ has two components.
Thus $\dim\ker(A_L-\transpose{A}_L)=1$.
\end{proof}


\subsection{ Livingston's theorem }

Recall that the {\it discriminant} of a polynomial $P(t)=\sum a_k t^k\in\F[t]$ of
degree $n$ is $\Disc P=a_n^{2n-2}\prod_{i<j}(\alpha_i-\alpha_j)^2$, where
$\alpha_1,\dots,\alpha_n$ are the roots of $P$ in an extension of $\F$.
This is a polynomial in $a_0,\dots,a_n$ with integral coefficients.

\begin{theorem} \cite[Theorem 5.1]{Liv}, \cite[Theorem 6.4.2]{LivNaik} \label{theorem.living}
\footnote{We corrected misprints in \cite{Liv} in the definitions of $\Disc$ and
$\tilde\Delta_A$ (denoted in \cite{Liv} by $\bar\Delta_A$).}
Let $A$ be a non-singular matrix with entries in $\Z$, such that $\Delta_A(1)\Delta_A(-1)\ne 0$.
Let $(Q,T)=(A+\transpose{A},A^{-1}\transpose{A})$ be the isometric structure of $A$.
Suppose that all irreducible symmetric factors of $\Delta_T(t)$ have even exponent.
Then for any prime $p$ that does not divide $2\det(A)\,\Disc\tilde\Delta_A$, the image of
$(Q,T)$ in $\bar{\mathcal G}^*_{\Q_p}$ is zero, where $\tilde\Delta_A(t)$ denotes the
product of all the distinct irreducible factors of $\Delta_A(t)$ in $\Z[t]$.
\end{theorem}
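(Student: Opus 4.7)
The plan is to reduce the assertion to the classification of Hermitian forms over a $p$-adic local field. First, I would apply Proposition~\ref{Levine.V_delta} over $\Q_p$ to split $(Q,T)$ into its $\delta$-primary summands, so that it suffices to treat each symmetric irreducible factor $\delta$ of $\Delta_T$ separately. The hypothesis $p\nmid\Disc\tilde\Delta_A$ ensures that the mod-$p$ reductions of the distinct irreducible factors of $\tilde\Delta_A$ remain pairwise coprime, so this decomposition is compatible with the integral model of $A$ over $\Z_p$.

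Next, by iterating Proposition~\ref{minimal}, I would replace the $\delta$-primary component (whose characteristic polynomial is a power $\delta^{2k}$) by a concordant isometric structure $(V',b',T')$ whose minimal polynomial is exactly $\delta$. Then $V'$ acquires the structure of a vector space over the field $K=\Q_p[t]/\delta(t)$, and the key observation is that the evenness of the exponent $2k$ propagates through this reduction, forcing $\dim_K V'$ to be even. Since $\delta$ is symmetric, the assignment $t\mapsto t^{-1}$ defines an involution $\sigma$ on $K$, and a trace construction converts $b'$ into a non-degenerate $\sigma$-Hermitian form $h:V'\times V'\to K$. A $T'$-invariant $b'$-metabolizer in $V'$ corresponds precisely to a half-$K$-dimensional $h$-isotropic subspace, so it remains to show that $h$ is hyperbolic.

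To conclude I would invoke the local Witt-group classification of Hermitian forms. The hypothesis $p\nmid\Disc\tilde\Delta_A$ ensures that $K$ is unramified over its fixed subfield $K^\sigma$, and the hypothesis $p\nmid 2\det A$ ensures that the form has unit discriminant at $p$. For an unramified quadratic extension of a local field of odd residue characteristic, the Witt group of Hermitian forms is $\Z/2$, detected solely by the rank modulo $2$; so the evenness of $\dim_K V'$ forces $h$ to be hyperbolic and produces the required metabolizer, showing that $(Q,T)$ maps to zero in $\bar{\mathcal{G}}^*_{\Q_p}$.

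The main obstacle is controlling the parity of $\dim_K V'$ through the iterated reductions of Proposition~\ref{minimal}, since a priori the reduction can decrease dimension in ways that do not manifestly preserve the evenness of the $\delta$-exponent. One likely needs to carry out a Jordan-block analysis of $T$ acting on the $\delta$-primary component over $\Z_p$---in the spirit of the argument given in the proof of Lemma~\ref{lem.(t-1)}---to verify that the even-exponent hypothesis is stable under each concordance reduction when $p$ is a good prime in the stated sense.
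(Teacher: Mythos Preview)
The paper does not give a self-contained proof of this theorem; it refers to Livingston's original argument, remarks that the proof carries over to the present generality, supplies Lemma~\ref{lem.liv} (a discriminant identity for palindromic polynomials) as the replacement for \cite[Lemma~5.2]{Liv}, and highlights in a remark that $\det Q^\delta$ being a square follows from \cite[Lemma~7(c)]{Lev1}. Your outline follows the Livingston--Levine strategy and is broadly on the right track.

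There is, however, a genuine gap in your final step. For an unramified quadratic extension $K/K^\sigma$ of non-dyadic $p$-adic local fields the Witt group of $\sigma$-Hermitian forms is \emph{not} $\Z/2$ detected by rank alone; it is $\Z/2\times\Z/2$, the second factor being the discriminant in $(K^\sigma)^\times/N_{K/K^\sigma}(K^\times)\cong\Z/2$. For instance $\langle 1,-\pi\rangle$, with $\pi$ a uniformizer of $K^\sigma$, is an anisotropic Hermitian form of rank two. So even $K$-rank by itself does not force your form $h$ to be hyperbolic. The missing ingredient is exactly the one the paper singles out in its remark after Lemma~\ref{lem.liv}: $\det Q_\delta$ is a square (Levine's Lemma~7(c)), which together with $p\nmid 2\det A$ forces the Hermitian discriminant to be trivial. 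With both even $K$-rank and trivial discriminant, hyperbolicity follows.

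The parity obstacle you flag at the end can also be bypassed entirely. By \cite[Proposition~16]{Lev1} (used in the paper's algorithm in \S4.6), over a local field the isometric structure $(Q_\delta,T_\delta)$ is metabolic as soon as the underlying quadratic form $Q_\delta$ is; one then checks the latter directly via the discriminant and Hasse-invariant criteria of \S\ref{sect.Serre}, without ever invoking Proposition~\ref{minimal} or Hermitian forms. This is closer to how Livingston's argument actually runs and avoids having to track the $K$-dimension through successive concordance reductions.
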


This theorem is formulated in \cite[Theorem 5.1]{Liv} for Seifert matrices of knots, however
its proof goes in our more general setting. The corresponding analog of \cite[Lemma 5.2]{Liv}
(which is used in the proof of \cite[Theorem 5.1]{Liv}) can be also derived from the following fact.

\begin{lemma} \label{lem.liv}
Let $P(t)$ be a palindromic polynomial of even degree with coefficients in a ring $\Lambda$.
Then there exists $\lambda\in\Lambda$ such that $\Disc P = \lambda P(1)P(-1)$.
\end{lemma}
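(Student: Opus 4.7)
The plan is to use the substitution $z = t + t^{-1}$ to reduce $P$ to a ``half-degree'' companion polynomial $R \in \Lambda[z]$, and then relate $\Disc P$ directly to $P(1), P(-1)$, and $\Disc R$.

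First I would build $R$. Using the recursion $p_0(z)=2$, $p_1(z)=z$, $p_{k+1}(z)=z p_k(z)-p_{k-1}(z)$, one has $t^k+t^{-k}=p_k(t+t^{-1})$ with $p_k\in\Z[z]$. Since $P$ is palindromic of degree $n=2m$, $t^{-m}P(t)$ is a $\Lambda$-linear combination of $t^k+t^{-k}$, so $t^{-m}P(t)=R(t+t^{-1})$ for a unique $R\in\Lambda[z]$ of degree $m$, with leading coefficient equal to that of $P$. In particular, $P(\pm 1)=R(\pm 2)$. By the standard device of working over $\Z[a_0,\dots,a_m]$ and specialising, I may argue as if $\Lambda$ is a domain and the roots of $P$ live in an algebraic closure; palindromicity forces them to pair as $(\alpha_j,\alpha_j^{-1})$, $j=1,\dots,m$, while the $\beta_j:=\alpha_j+\alpha_j^{-1}$ are precisely the roots of $R$.

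Next I would split the factorisation $\prod_{I<J}(\alpha_I-\alpha_J)^2$ occurring in $\Disc P=a_n^{2n-2}\prod_{I<J}(\alpha_I-\alpha_J)^2$ into \emph{within-pair} and \emph{across-pair} contributions. The within-pair factor is
\[
\prod_{j=1}^{m}(\alpha_j-\alpha_j^{-1})^2=\prod_{j=1}^{m}\frac{(\alpha_j^2-1)^2}{\alpha_j^2},
\]
which I will identify with $(-1)^m P(1)P(-1)/a_n^2$ using $P(1)P(-1)=a_n^2\prod_{I}(1-\alpha_I^2)$ (here $n$ even) and the factorisation $(1-\alpha_j^2)(1-\alpha_j^{-2})=-(\alpha_j^2-1)^2/\alpha_j^2$. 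The across-pair factor, for fixed $i<j\le m$, is the product of the four cross-differences $(\alpha_i-\alpha_j)(\alpha_i-\alpha_j^{-1})(\alpha_i^{-1}-\alpha_j)(\alpha_i^{-1}-\alpha_j^{-1})$, which a short computation rewrites as $(\beta_i-\beta_j)^2$. Squaring (since each difference appears squared in $\Disc P$), this contribution is $\prod_{i<j\le m}(\beta_i-\beta_j)^4=(\Disc R)^2/a_n^{4m-4}$.

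Assembling everything, the powers of $a_n$ cancel exactly ($2n-2-2-(4m-4)=0$ since $n=2m$), and I obtain the clean identity
\[
\Disc P \;=\; (-1)^m\, P(1)\,P(-1)\,(\Disc R)^2.
\]
Since $R\in\Lambda[z]$, the quantity $\Disc R$ lies in $\Lambda$, and setting $\lambda=(-1)^m(\Disc R)^2\in\Lambda$ completes the proof. The only real obstacle is the bookkeeping in step two, in particular verifying that the four cross-differences multiply to $(\beta_i-\beta_j)^2$; this is a direct algebraic identity and presents no conceptual difficulty.
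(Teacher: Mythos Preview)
Your proof is correct. The key identity $(\alpha_i-\alpha_j)(\alpha_i-\alpha_j^{-1})(\alpha_i^{-1}-\alpha_j)(\alpha_i^{-1}-\alpha_j^{-1})=(\beta_i-\beta_j)^2$ follows immediately from the factorisation $(\alpha-\gamma)(\alpha-\gamma^{-1})=\alpha\big((\alpha+\alpha^{-1})-(\gamma+\gamma^{-1})\big)$, and the bookkeeping with the powers of $a_n$ checks out. The reduction to the universal polynomial over $\Z[a_0,\dots,a_m]$ legitimises treating the roots as generic (in particular distinct and $\ne\pm1$), so the pairing $\alpha_j\leftrightarrow\alpha_j^{-1}$ is unambiguous.

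Your approach differs from the paper's. The paper argues non-constructively: it passes to the universal ring $\Z[x_0,\dots,x_m]$, observes that the linear polynomials $P_{\pm1}(x)$ are irreducible, notes that $P_{\pm1}(z)=0$ forces a repeated root of $P$ and hence $D(z)=0$, and then invokes Hilbert's Nullstellensatz to conclude that $P_{\pm1}$ divides $D$. This is short but gives no information about $\lambda$. Your computation is more elementary (no Nullstellensatz) and yields the explicit value $\lambda=(-1)^m(\Disc R)^2$. In fact the paper's subsequent remark states exactly this kind of formula, but with $\Disc S$ in place of $(\Disc S)^2$; a quick check with $P(t)=t^4+1$ (where $\Disc P=256$, $P(1)P(-1)=4$, $R(z)=z^2-2$, $\Disc R=8$) confirms that your exponent is the right one.
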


\begin{proof} It is enough to assume that $\Lambda=\Z[x_0,\dots,x_n]$ and $P=\sum_{k=0}^{2n}x_k t^k$
with $x_k=x_{2n-k}$ for $k>n$. Then $\Disc P$ and $P(\pm1)$ are polynomials in $x=(x_0,\dots,x_n)$ that we
denote by $D(x)$ and $P_{\pm1}(x)$.
It is clear that $P_{\pm 1}(x)$ is irreducible in $\Lambda$ and in $\bar\Lambda=\C[x_0,\dots,x_n]$.
Furthermore, $P_{\pm 1}(z)=0$ implies $D(z)=0$ for any $z\in\C^{n+1}$. Then by Hilbert's Nulstellensatz
$D$ is divisible by $P_{\pm1}$ in $\bar\Lambda$ and hence in $\Lambda$.
\end{proof}

\begin{remark} By computations similar to those in the proof of \cite[Lemma 5.2]{Liv},
one can show that the coefficient $\lambda$ in Lemma \ref{lem.liv} is equal to $(-1)^n \Disc S$,
where $S$ is the polynomial such that $P(t)=t^n S(t+t^{-1})$.
\end{remark}

\if01{ 
n=5
Q=Sum[a[k] x^k,{k,0,n}]
P = Expand[t^n Q/.x->t+1/t]
P1 = P/.t->1
P2 = P/.t->-1
DQ=Factor[Discriminant[Q,x]];
DP=Factor[Discriminant[P,t]];
Factor[DP/DQ^2/P1/P2]
}\fi 

\begin{remark} When completing details in the proof of Lemma~\ref{lem.liv}, one should take into account
that setting $a_n=0$ in the discriminant of $\sum_{k=0}^n a_kt^k$ (considered as a polynomial of degree $n$)
does not give the discriminant of $\sum_{k=0}^{n-1} a_kt^k$ (considered as a polynomial of degree $n-1$).
\end{remark}

\begin{remark} In the proof of \cite[Theorem 5.1]{Liv} (p.1701, l.1; here $Q$ instead of $Q^\delta$
is a misprint), the assertion that $\det Q^\delta$ is a square follows from
\cite[Theorem 2.6(3)]{Liv}, which is \cite[Lemma 7(c)]{Lev1}.
A proof of \cite[Thm.~C.1]{Liv} can be found in \cite[\S 14.6, Exercise 30]{DF}.
\end{remark}


\subsection{An algorithm to decide if $\beta(A)=1$}
 
Let $A$ be a square matrix  of odd dimension, with entries in $\mathbb{Q}$, such that $\Delta_A(-1)\neq 0$.
In particular $A$ is admissible. We use Levine's theory combined with the characterization of
Theorem \ref{coindex1} to provide an algorithm deciding whether $\beta(A)=1$. 
Indeed, one has $\beta(A)=1$ if and only if $A$ is concordant (over $\Q$) to the standard
coindex-one matrix $\std(\Delta_A)$ associated with $\Delta_A$ (or in fact to any standard
coindex-one matrix associated with a polynomial FM-equivalent to $\Delta_A$).

Let $M=A\oplus-\std(\Delta_A)$. The matrix of even dimension $M$ is admissible and $\beta(A)=1$
if and only if $M$ is metabolic.
 
Following the proof of Lemma~\ref{lem.nonsing} due to Levine, we can transform $M$ into a non-singular
matrix, S-equivalent to $M$. After these transformations, we assume now that $M$ is a non-singular
matrix with $\Delta_M(-1) \neq 0$.
   
Let $(Q,T)=(M+\transpose M,M^{-1}\transpose M)$ be the isometric structure of $M$. Since
$\Delta_M(-1) \neq 0$, the isometric structure $(Q,T)$ is admissible and $M$ is metabolic
if and only if the class of $(Q,T)$ in $\bar{\mathcal{G}}_\Q^*$ is zero (see Proposition \ref{prop.isom}).

By \cite[Proposition 17]{Lev2}, the class of $(Q,T)$ is trivial in $\bar{\mathcal{G}}_\Q^*$
if and only if its images  in $ \bar{\mathcal{G}}_\R^*$ and $ \bar{\mathcal{G}}_{\Q_p}^*$ for all
primes $p$ are trivial. By \cite[Section 20]{Lev2} \label{prop.sign}, the signature function $\sigma_M$
vanishes if and only if the image of the class of $M$ in $\overline{\mathcal{G}}^\R$ is zero.
See also [Section 2]\cite{Liv}. Write
$$
     \Delta_T(t)= (t-1)^{2 \alpha_0} \delta_1^{2 \alpha_1} \dots \delta_n^{2 \alpha_n} \bar{F}(t)  F(t),
$$
where $\delta_1,\dots,\delta_n$ are distinct irreducible symmetric factors  of $\Delta_A$ in $\Q[t]$,
the exponents $\alpha_0,\dots,\alpha_n$ are integers and $F$ is a polynomial in $\Q[t]$.
Note that if some irreducible symmetric factor has odd exponent, then the structure $(Q,T)$
is not metabolic. According to Section \ref{modules}, the class of $(Q,T)$ in $\bar{\mathcal{G}}_\Q^*$
decomposes as a sum of classes of $(Q_{t-1},T_{t-1})$ and $(Q',T')$, with respective characteristic
polynomials $(t-1)^{2 \alpha_0}$ and the product $\delta_1^{2 \alpha_1} \dots \delta_n^{2 \alpha_n}$
(the restriction to the $\bar{F} F$-primary component being metabolic). 
 
 Let $M'=Q'(I+T')^{-1}$ be a matrix with entries in $\Q$. 
 After multiplication by a factor in $\Q^*$, we may assume now that  $M'$ has entries in $\Z$. 
 Denote $\tilde{\Delta}_{M'}= \delta_1 \dots \delta_n$. By Livingston's Theorem \ref{theorem.living},
 the image of $(Q',T')$ in $ \bar{\mathcal{G}}_{\Q_p}^*$ is zero for all prime $p$
 that does not divide $2 \det(M') \text{Disc } \tilde{\Delta}_{M'}$. 
 Then, in order to determine if $M$ is metabolic, we have to determine the classes:
 \begin{itemize}
 \item
       $(Q_{t-1},T_{t-1})$ in $\bar{\mathcal{G}}_\Q^{t-1}$. By Proposition \ref{minimal},
       $(Q_{t-1},T_{t-1})$ is metabolic or concordant to an isometric structure whose
       minimal polynomial is $t-1$ (ie. $T'=I$). 
 \item
       the image of $(Q',T')$ in $ \bar{\mathcal{G}}_{\Q_p}^*$ for all prime $p$ dividing
       $2 \det(M') \text{Disc } \tilde{\Delta}_{M'}$. Let $\delta$ be an irreducible factor of
       $\Delta_{M'}$ in $\Q_p$. By  \cite[Proposition 16]{Lev1} \label{prop.prim},
       the restriction of $(Q',T')$ to the $\delta$-primary component  is trivial in 
       $\bar{\mathcal{G}}_\Q^\delta$ if the restriction of $Q'$ is metabolic. 
       In order to determine this, one refers to Section \ref{sect.Serre} (over $\Q_p$).
\end{itemize}
 

\section{Link invariants and surfaces in the four-ball} \label{linkinv}

\subsection{The coindex of a link}\label{coind(link)}
 A Seifert surface is a compact, connected, oriented surface $S$ embedded in $S^3$ whose
 boundary is a given oriented link. Let $H_1(S)$ be the first homology group,
with rational coefficients.
 
\begin{definition} \label{sequiv}
Let $L$ be an oriented link and $S$ be a Seifert surface of $L$,
with Seifert matrix $A$ in some basis of $H_1(S)$. The coindex
$$
    \beta(L):=\beta(A)
$$
depends only on the isotopy class of $L$ and does not depend on the choice of $S$.
\end{definition}
 
 The fact that $\beta(L)$ is well-defined follows
from the invariance by S-equivalence (Proposition~\ref{S-equiv}).
The rest of this subsection is devoted to a proof of Theorem~\ref{main1} about
the invariance of $\beta(L)$ by (topological) concordance
under the assumption $\Delta_L\ne 0$.
We start with the following two elementary lemmas.

\begin{lemma} \label{linalg}
Let $E,F,$ and $G$ be $\mathbb{Q}$-vector spaces of finite dimension. Consider linear
maps $f: E \rightarrow F$ and $g: F \rightarrow G$. Then, the following holds
$$
    \dim \ker(g \circ f) \ge \dim E - \dim \operatorname{im} g.
$$
\end{lemma}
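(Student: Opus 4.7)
The plan is a direct application of the rank--nullity theorem combined with the obvious inclusion $\operatorname{im}(g\circ f)\subseteq\operatorname{im}(g)$. First I would apply rank--nullity to the composition $g\circ f\colon E\to G$, which gives
$$
    \dim E = \dim\ker(g\circ f) + \dim\operatorname{im}(g\circ f).
$$
Rearranging this yields $\dim\ker(g\circ f)=\dim E-\dim\operatorname{im}(g\circ f)$.

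Second, I would observe that since every vector in $\operatorname{im}(g\circ f)$ is of the form $g(f(x))$ for some $x\in E$, it lies in $\operatorname{im}(g)$; hence $\operatorname{im}(g\circ f)\subseteq\operatorname{im}(g)$, and so $\dim\operatorname{im}(g\circ f)\le\dim\operatorname{im}(g)$.

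Combining the two gives the desired inequality
$$
  \dim\ker(g\circ f)=\dim E-\dim\operatorname{im}(g\circ f)\ge\dim E-\dim\operatorname{im}(g).
$$
There is no real obstacle here; the statement is a one-line consequence of rank--nullity and the monotonicity of images under composition, and it is stated separately only because it will be invoked later (presumably in the proof of Theorem~\ref{main1}) as a convenient bookkeeping lemma.
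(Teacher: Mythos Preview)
Your proof is correct and is essentially identical to the paper's own one-line argument: the paper also applies rank--nullity to $g\circ f$ and then uses $\dim\operatorname{im}(g\circ f)\le\dim\operatorname{im} g$.
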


\begin{proof} Indeed,
$\dim \ker(g \circ f)
      = \dim E - \dim \operatorname{im}(g\circ f)
    \ge \dim E - \dim \operatorname{im} g$.
\end{proof}

\begin{lemma} \label{lagr}  
Let $\Sigma$ be a compact connected oriented surface and $M$ be a compact connected
three-manifold with $\partial M=\Sigma$. Let $i_*: H_1(\Sigma) \rightarrow H_1(M)$ be the
map induced by the inclusion in homology with rational coefficients.
Then $\dim \ker i_* = \dim\operatorname{im}i_*=\frac{1}{2} \dim H_1(\Sigma)$.
\end{lemma}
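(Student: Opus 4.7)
The plan is to run the long exact sequence of the pair $(M,\Sigma)$ in rational homology and apply Poincaré--Lefschetz duality on the compact oriented three-manifold $M$. Orientability of $M$ is tacitly used here via duality; in the paper's context it is automatic since $M$ arises as an oriented piece of a four-manifold.

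First, I would write down the relevant portion
$$H_3(M,\Sigma)\to H_2(\Sigma)\to H_2(M)\to H_2(M,\Sigma)\to H_1(\Sigma)\xrightarrow{i_*} H_1(M)\to H_1(M,\Sigma)\to H_0(\Sigma)\to H_0(M)\to 0$$
and invoke $H_k(M,\partial M)\cong H^{3-k}(M)$. Since $\mathbb{Q}$ is a field, $\dim H^j(M)=\dim H_j(M)$, so setting $a=\dim H_1(\Sigma)$, $b=\dim H_1(M)$, $c=\dim H_2(M)$, one has $\dim H_3(M,\Sigma)=1$, $\dim H_2(M,\Sigma)=b$ and $\dim H_1(M,\Sigma)=c$.

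Next I would exploit both ends of the sequence. Connectedness of $\Sigma$ and of $M$ makes $H_0(\Sigma)\to H_0(M)$ an isomorphism, so $H_1(M)\to H_1(M,\Sigma)$ is surjective with kernel $\operatorname{im} i_*$, giving $\dim\operatorname{im} i_*=b-c$. At the top, $H_3(M)=0$ since $M$ has non-empty boundary, so $H_3(M,\Sigma)\hookrightarrow H_2(\Sigma)$; both groups are one-dimensional and the relative fundamental class $[M,\Sigma]$ maps to $[\Sigma]$, so this injection is an isomorphism. Consequently $H_2(M)\hookrightarrow H_2(M,\Sigma)$ is injective, and exactness at $H_1(\Sigma)$ yields $\dim\ker i_*=\dim H_2(M,\Sigma)-\dim H_2(M)=b-c$.

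Combining the two counts, $\dim\ker i_*=\dim\operatorname{im} i_*=b-c$; since they sum to $a$, both equal $a/2=\tfrac{1}{2}\dim H_1(\Sigma)$. There is no serious obstacle: Poincaré--Lefschetz duality is the only non-formal input, and everything else is bookkeeping in the exact sequence.
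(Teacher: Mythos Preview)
Your argument is correct. Both your proof and the paper's use the long exact sequence of the pair $(M,\Sigma)$ together with Poincar\'e--Lefschetz duality, but the bookkeeping is organized differently. The paper observes that the boundary map $\partial:H_2(M,\Sigma)\to H_1(\Sigma)$ is identified, via the duality isomorphisms, with the dual map $i^*:H_1(M)^*\to H_1(\Sigma)^*$; since $\rank i^*=\rank i_*$, this yields $\dim\ker i_*=\rank\partial=\rank i_*$ in one stroke, and rank--nullity finishes. You instead compute $\dim\ker i_*$ and $\dim\operatorname{im} i_*$ separately, chasing the exact sequence at both ends and using duality only as a dimension count, to find that each equals $b-c$. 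The paper's route is shorter and avoids analyzing the top of the sequence, while yours is slightly more elementary in that it never invokes the naturality of duality (the commutativity of the square), only the numerical identities $\dim H_k(M,\Sigma)=\dim H_{3-k}(M)$.
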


\begin{proof}
Consider the following commutative diagram, whose rows come from the long exact sequence
in rational (co)homology of the pair $(M,\Sigma)$ and the vertical arrows follow from
Poincar\'e duality. 
\[
\begin{tikzcd}
   H_2(M,\Sigma) \rar["\partial"] \dar & H_1(\Sigma)  \rar["i_*"] \dar & H_1(M)  \\
   H_1(M)^*      \rar["i^*"]           & H_1(\Sigma)^*                & 
\end{tikzcd}
\]
Hence $\dim\ker i_* = \rank\, \partial = \rank\, i^*=\rank\, i_*$
and the result follows from the rank theorem applied to $i_*$.
\end{proof}

\begin{proposition}\cite{Kaw}\label{inv.Delta=0}
If a link $L$ is concordant to $L'$ and $\Delta_L\ne 0$, then $\Delta_{L'}\ne 0$.
\end{proposition}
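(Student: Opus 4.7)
The plan is to work with the infinite cyclic cover of the concordance exterior and derive the claim from a rank comparison. Let $C\subset S^3\times I$ be the locally flat annular concordance from $L$ to $L'$, and set $W=(S^3\times I)\setminus\nu(C)$. Then $W$ is a compact topological $4$-manifold whose boundary decomposes as $X_L\cup T\cup X_{L'}$, with $T$ a disjoint union of $n$ ``lateral'' tori (one per annular component of $C$). A standard Mayer--Vietoris computation shows that the inclusions induce isomorphisms $H_1(X_L;\Z)\xrightarrow{\cong}H_1(W;\Z)\xleftarrow{\cong}H_1(X_{L'};\Z)$, all three groups being free abelian of rank $n$ and generated by meridians of $C$.

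The total linking homomorphism $\phi\colon H_1(W)\to\Z$, sending each meridian to $1$, restricts on $X_L$ and $X_{L'}$ to the standard abelianizations used in the definition of $\Delta_L$ and $\Delta_{L'}$. Let $\tilde W,\tilde X_L,\tilde X_{L'},\tilde T$ denote the associated infinite cyclic covers, all viewed as $\Lambda=\Q[t^{\pm 1}]$-modules through deck transformations. Recall that $\Delta_L\ne 0$ is equivalent to $H_1(\tilde X_L;\Q)$ being a torsion $\Lambda$-module, and similarly for $L'$. The lateral tori do not affect ranks: each component of $\tilde T$ is homeomorphic to $\R\times S^1$, whose rational homology is concentrated in degrees $0$ and $1$ with trivial $t$-action, hence isomorphic to $\Lambda/(t-1)$, which is a torsion $\Lambda$-module.

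The core of the argument is Poincar\'e--Lefschetz duality on $\tilde W$, providing an isomorphism $H_k(\tilde W,\partial\tilde W;\Q)\cong H^{4-k}(\tilde W;\Q)$ of $\Lambda$-modules (with the usual involution $t\mapsto t^{-1}$ on one side). Since $\Lambda$ is a PID, the $\Lambda$-rank is additive along short exact sequences. Feeding duality into the long exact sequences of the pairs $(\tilde W,\tilde X_L\cup\tilde T)$ and $(\tilde W,\tilde X_{L'}\cup\tilde T)$, and using that $\tilde T$ contributes only torsion $\Lambda$-modules, one obtains the equality $\operatorname{rank}_\Lambda H_1(\tilde X_L;\Q)=\operatorname{rank}_\Lambda H_1(\tilde X_{L'};\Q)$. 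Under the hypothesis $\Delta_L\ne 0$ the left side vanishes, forcing $\Delta_{L'}\ne 0$. The main technical hurdle is the bookkeeping of the boundary tori in these exact sequences and of the involution coming from duality; both are standard and are carried out in the references \cite{Kaw,NP}.
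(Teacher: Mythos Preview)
The paper does not give a proof here; it simply cites \cite{Kaw} (noting the extension from PL to locally flat) and \cite[Theorem~4.2]{NP}. Your sketch correctly outlines the infinite-cyclic-cover duality argument carried out in those references, so your approach is aligned with—and in fact more detailed than—the paper's own treatment.
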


In \cite{Kaw} this result is proven for PL-concordance, but the proof easily
extends to the locally flat case. Otherwise, this fact follows from
\cite[Theorem~4.2]{NP}.

\begin{proof}[Proof of Theorem \ref{main1}]
Let $C \subset S^3 \times I$ be the (locally flat) concordance with boundary
$L  \subset S^3 \times \{0 \}$ and  $L' \subset S^3 \times \{ 1 \}$.
Consider Seifert surfaces $S$ and $S'$ for $L$ and $L'$,
and let $\theta$ and $\theta'$ be the corresponding Seifert forms.
One can show that  there exists a compact connected oriented $3$-submanifold
$M$ in $S^3 \times I$, with boundary
$$
     \partial M = S \cup_L C \cup_{L'} S'.
$$
The proof of this fact uses the same arguments as Lemma \ref{cobound2}.
Let $h=\text{genus}(\partial M)$ and let $K$ be the kernel of the map
$i_* \circ j_*$ induced by the inclusions, with rational coefficients:
$$
   H_1(S \sqcup S') \simeq H_1(S) \oplus H_1(S') \stackrel{j_*}
   \longrightarrow H_1(\partial M) \stackrel{i_*} \longrightarrow H_1(M).
$$
By computing the Euler characteristics, we obtain
$\dim H_1(S\sqcup S')=\dim H_1(\partial M)=2h$. By Lemma~\ref{lagr} we also
have $\dim\operatorname{im}i_*=\frac12\dim H_1(\partial M)=h$.
Hence, by Lemma \ref{linalg},
$$
   \dim K \ge \dim H_1(S\sqcup S')-\dim\operatorname{im}i_*
          = 2h - h = h.
$$
 
Consider the bilinear form $\Theta= \theta \oplus (- \theta')$ on
$H_1(S) \oplus H_1(S')$ and let $x\oplus x'$ and $y \oplus y'$ be two elements of $K$. 
Let $\alpha,\beta$ be 2-chains in $M$ such that $\partial\alpha=x\oplus x'$ and
$\partial\beta=y \oplus y'$, and $\beta_+$ denote the push of $\beta$
from $M$ in the positive normal direction in $S^3 \times I$.
Then $\Theta$ is computed by the intersections in $S^3 \times I$:
$$
  \Theta(x\oplus x',y\oplus y')= \theta(x,y) - \theta'(x',y')= \alpha\cdot\beta_+.
$$
Since $\alpha\subset M$ and $\beta_+ \subset (S^3 \times I) \setminus M$,
these chains are disjoint and $\Theta(x\oplus x',y\oplus y') = 0$.
Hence $K$ is totally isotropic and $\dim K\ge h$, which means that
$\Theta$ is metabolic.
Thus $\theta'$ is concordant to $\theta$. Since $\Delta_L$ and $\Delta_{L'}$
are non-zero (see Proposition~\ref{inv.Delta=0}), the Seifert matrices are
admissible and the result follows from Proposition~\ref{algconc}.
\end{proof}

\subsection{Bounding the slice Euler characteristic via the coindex}

The following theorem is equivalent to Theorem \ref{main2}. 
 
\begin{theorem} \label{main3}
If a link $L$ bounds a locally flat embedded surface $F$ in $B^4$ without closed
components, then the following inequality holds:
$$
                    \chi(F)  \leq  1 - \beta(L).
$$
\end{theorem}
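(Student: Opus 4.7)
The plan is to mimic the strategy from the proof of Theorem~\ref{main1}: build an auxiliary $3$-submanifold $M\subset B^4$, extract from it an isotropic subspace of a Seifert form of $L$ whose dimension is controlled by $\chi(F)$, and deduce the desired inequality. Concretely, I would fix a connected Seifert surface $S\subset S^3$ for $L$ with Seifert form $\theta$ on $V=H_1(S;\Q)$, so that $\dim V=1-\chi(S)$ and $\beta(L)=\dim V-2\,\ind\theta$. Push $S$ slightly into $B^4$ and glue to $F$ along $L$ to obtain the closed oriented surface $\Sigma=S\cup_L F$; since $S$ is connected and no component of $F$ is closed, $\Sigma$ is connected. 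Using exactly the argument of Lemma~\ref{cobound2} (invoked in the proof of Theorem~\ref{main1}), I obtain a compact connected oriented $3$-submanifold $M\subset B^4$ with $\partial M=\Sigma$.

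The key step is to show that the kernel $K$ of the composition
\[
  H_1(S;\Q)\xrightarrow{\,j_*\,}H_1(\Sigma;\Q)\xrightarrow{\,i_*\,}H_1(M;\Q)
\]
is totally isotropic for $\theta$. For $x,y\in K$, choose $2$-chains $\alpha,\beta$ in $M$ with $\partial\alpha=x$, $\partial\beta=y$, and let $\beta^+$ denote a push-off of $\beta$ off $M$ in the positive normal direction in $B^4$. Following the proof of Theorem~\ref{main1}, the Seifert pairing is computed by the four-dimensional intersection $\theta(x,y)=\alpha\cdot\beta^+$, and this vanishes because $\alpha\subset M$ while $\beta^+\subset B^4\setminus M$. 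Hence $\dim K\le\ind\theta$.

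The bound now follows from a dimension count. Lemma~\ref{lagr} gives $\dim\operatorname{im}i_*=\tfrac12\dim H_1(\Sigma;\Q)$; combined with Lemma~\ref{linalg} and the identities $\chi(\Sigma)=\chi(S)+\chi(F)$ and $\dim H_1(\Sigma;\Q)=2-\chi(\Sigma)$ (valid because $\Sigma$ is closed and connected), one gets
\[
  \dim K\ge\dim V-\dim\operatorname{im}i_*=(1-\chi(S))-\tfrac12\bigl(2-\chi(S)-\chi(F)\bigr)=\tfrac12\bigl(\chi(F)-\chi(S)\bigr).
\]
Therefore $\beta(L)\le\dim V-2\dim K\le(1-\chi(S))-(\chi(F)-\chi(S))=1-\chi(F)$, as desired.

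The main obstacle is producing the bounding $3$-submanifold $M\subset B^4$ in the locally flat category; this is precisely the content of the referenced Lemma~\ref{cobound2}, and it relies on the triviality of the normal bundle of $\Sigma$ together with the vanishing of $H_2(B^4)$. A secondary subtlety is checking that the Seifert pairing in $S^3$ is indeed computed by the intersection $\alpha\cdot\beta^+$ in $B^4$: this reduces to matching the positive normal direction used to push $\beta$ off $M$ with the Seifert push-off direction on the pushed-in copy of $S$, which can be arranged by consistent choices of coorientation.
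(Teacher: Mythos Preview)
Your argument is correct and follows essentially the same route as the paper's proof: construct $M\subset B^4$ bounding $S\cup_L F$ via Lemma~\ref{cobound2}, show that $\ker(i_*\circ j_*)$ is isotropic for the Seifert form by the push-off argument, and finish with the dimension count from Lemmas~\ref{linalg} and~\ref{lagr}. If anything, you are slightly more careful than the paper in explicitly verifying that $\Sigma$ is connected (needed for Lemma~\ref{lagr}) and in flagging the compatibility of push-off directions.
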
 
 
We need the next lemma.

\begin{lemma} \label{cobound2}
If $L$ bounds a locally flat surface $F$ in $B^4$ and a Seifert surface $S$ in $S^3$,
then there is a compact oriented $3$-manifold $M$ in $B^4$ with $\partial M= F \cup_L S$. 
\end{lemma}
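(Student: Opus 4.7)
The plan is to build $M$ via a Pontryagin--Thom style construction, exploiting the vanishing of $H_2(B^4;\Z)$ to see that the closed surface $\Sigma := F \cup_L S$ is null-homologous in $B^4$.

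I would first push $S$ slightly into $\operatorname{int}(B^4)$ using a collar of $\partial B^4$, so that $\Sigma$ becomes an ambient isotopic closed oriented locally flat surface contained in the interior; producing a bounding $3$-manifold for this pushed-in copy and isotoping back will yield the desired $M$. Since $H_2(B^4) = 0$ the self-intersection $[\Sigma]\cdot[\Sigma]$ vanishes, so the oriented topological normal $D^2$-bundle of $\Sigma$ has zero Euler class and is trivial; by the locally flat tubular neighborhood theorem I would fix a closed regular neighborhood $N(\Sigma) \cong \Sigma \times D^2$ and set $X := B^4 \setminus \operatorname{int} N(\Sigma)$.

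Next, I would use the long exact sequence of the pair $(B^4,X)$, together with $H^1(B^4) = H^2(B^4) = 0$, excision, and the Thom isomorphism, to identify
\[
 H^1(X;\Z) \;\cong\; H^2(B^4, X;\Z) \;\cong\; H^2(N(\Sigma),\partial N(\Sigma);\Z) \;\cong\; H^0(\Sigma;\Z).
\]
Let $\phi \in H^1(X;\Z)$ be the class corresponding to the total Thom class. By naturality $\phi$ restricts on $\partial N(\Sigma) = \Sigma \times S^1$ to a class pairing to $1$ with each meridian $\{x\} \times S^1$. Represent $\phi$ by a continuous map $f \colon X \to S^1$; since $H^1(S^3) = 0$, after a homotopy of $f$ we may assume that $f$ is constant on $\partial B^4 \subset \partial X$.

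Finally I would pick a regular value $v$ of $f$ distinct from this constant. By topological transversality, $M_0 := f^{-1}(v)$ is a compact oriented locally flat $3$-submanifold of $X$, disjoint from $\partial B^4$, whose boundary lies in $\partial N(\Sigma)$ and meets each fibre $\{x\}\times S^1$ transversely in a single point --- hence $\partial M_0$ is the graph of a map $\Sigma \to S^1$, so is a copy of $\Sigma$. Capping $M_0$ off inside $N(\Sigma) \cong \Sigma \times D^2$ by the obvious $3$-chain sweeping $\partial M_0$ to the zero section produces a compact oriented $3$-manifold $M \subset B^4$ with $\partial M = \Sigma$, and isotoping back gives $\partial M = F \cup_L S$. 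The main technical obstacle is the appeal to locally flat tubular neighborhoods and topological transversality for $2$-submanifolds of $4$-manifolds, both of which are available by Freedman--Quinn; in the smooth category the whole argument is classical Pontryagin--Thom.
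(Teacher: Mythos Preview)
Your argument is correct and is precisely the standard Pontryagin--Thom construction that the paper has in mind: the paper does not write out a proof but simply declares the lemma folklore, pointing to \cite[\S1.5]{FL} for the detailed argument in the knot case and to Freedman--Quinn \cite[Ch.~9]{FQ} for the topological transversality and normal-bundle inputs you invoke. Your write-up is exactly a fleshed-out version of that cited argument, so there is nothing to compare.
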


The proof of Lemma \ref{cobound2} is folklore; the detailed proof (for knots) can be found
in \cite[Section 1.5]{FL}. However the same arguments work for links and follow essentially
from topological transversality \cite[Chapter 9]{FQ}.

\begin{proof}[Proof of Theorem \ref{main3}]
By Lemma \ref{cobound2}, the closed surface  $F \cup_L S$ bounds a $3$-manifold $M$
in $B^4$. Let $i: F \cup_L S \hookrightarrow M$ and $j:S \hookrightarrow F \cup_L S$
be the inclusions. Consider the composition
$$
   H_1(S) \stackrel{j_*} \longrightarrow H_1(F \cup S) \stackrel{i_*}
                         \longrightarrow H_1(M).
$$
The Seifert form $\theta$ on $H_1(S)$ vanishes on $\ker (i_* \circ j_*)$.
Indeed, suppose that $x$ and $y$ are two elements of $\ker (i_* \circ j_*)$.
There exist 2-chains $\alpha$ and $\beta$ in $M$ with $\partial \alpha=x$ and
$\partial \beta=y$. Then, as in the proof of Theorem~\ref{main1}, we push $\beta$
off from $M$ and we obtain $\theta(x,y)=0$.
Thus $\ind A\ge \dim\ker(i_*\circ j_*)$ whence, by Lemma~\ref{linalg},
$$
   \ind A \ge \dim H_1(S) - \dim\im i_*.
$$
By Lemma \ref{lagr} we also have
$2\dim\im i_*=\dim H_1(F\cup S)=2-\chi(F\cup S)$, thus
\begin{align*}
  \beta(A) &= \dim H_1(S) - 2\ind A\\
         &\le 2\dim\im i_* - \dim H_1(S)
           =  \big(2-\chi(F\cup S)\big) - \big(1-\chi(S)\big) = 1-\chi(F).
\end{align*}
\end{proof}


\subsection{Alexander polynomial and signatures}\label{alex.sign.null}

Let $L$ be an oriented link. 
The next result was due to Fox and Milnor in the case of knots, and extended to (colored)
links by the first author. Note that since we work with rational Seifert forms,
the statement is slightly weaker than the original ones.

\begin{corollary} \label{FM}  \cite{FM,Flo}
If $L$ bounds a locally flat surface in $B^4$ with Euler characteristic $1$,
then there exists $F \in \mathbb{Z}[t]$ such that $\Delta_L(t)=F(t) \bar F(t)$
(here $\Delta_L$ and $\bar F$ are as defined in Sections~\ref{sect.alex} and
\ref{sect.propor} respectively). 
\end{corollary}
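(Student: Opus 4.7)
The plan is to show that a Seifert matrix $A$ of $L$ is metabolic over $\Q$ and then appeal to Proposition~\ref{meta}. If $\Delta_L=0$, the assertion is trivial with $F=0$; so I assume $\Delta_L\ne 0$, which makes $A$ admissible.

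To produce the metabolicity of $A$, I would not simply invoke Theorem~\ref{main2} (which only yields $\beta(L)\le 0$ from the hypothesis $\chi_s(L)\ge 1$), but instead extract the relevant totally isotropic subspace directly from the construction in the proof of Theorem~\ref{main3}. Let $S$ be a Seifert surface for $L$ and let $\Sigma\subset B^4$ be the assumed locally flat surface with $\chi(\Sigma)=1$. Lemma~\ref{cobound2} gives a compact oriented 3-manifold $M\subset B^4$ with $\partial M=\Sigma\cup_L S$, and the push-off intersection argument used in the proof of Theorem~\ref{main3} shows that the kernel $K$ of the composition
\[
  H_1(S)\xrightarrow{j_*} H_1(\Sigma\cup S)\xrightarrow{i_*} H_1(M)
\]
is totally isotropic for the Seifert form. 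Combining Lemma~\ref{linalg} with the lagrangian property of Lemma~\ref{lagr} yields $\dim K\ge \dim H_1(S)-\tfrac12\dim H_1(\Sigma\cup S)$, and an Euler characteristic computation exploiting $\chi(\Sigma)=1$ shows $\dim H_1(\Sigma\cup S)=\dim H_1(S)$, so $\dim K\ge \tfrac12\dim H_1(S)$. Parity of $\chi(\Sigma)$ forces the number $k$ of components of $L$ to be odd, whence $\dim H_1(S)=2g_S+k-1$ is even, and any half-dimensional subspace of $K$ exhibits $A$ as metabolic.

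Having metabolic $A$, I would apply Proposition~\ref{meta} to conclude $\Delta_A(t)=F(t)\bar F(t)$ with $F\in\Q[t]$ and $F(0)\ne 0$; if $A$ happens to be singular, the block-determinant computation in the proof of Proposition~\ref{meta} still factors $\det(A-tA^\top)$ as $t^kG(t)G(t^{-1})$ for $G(t)=\det(B-tC^\top)$, so the Fox--Milnor factorization persists. To upgrade $F$ from $\Q[t]$ to $\Z[t]$, I would write $F=a\tilde F$ with $a\in\Q^\times$ and $\tilde F\in\Z[t]$ primitive; by Gauss's lemma $\tilde F\bar{\tilde F}$ is primitive in $\Z[t]$, and matching this against the integral polynomial $\Delta_L$ forces $a^2\in\Z$. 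Writing $a=p/q$ in lowest terms then gives $q=1$, hence $F\in\Z[t]$.

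The main subtlety lies in the first step: the bare bound $\beta(A)\le 0$ from Theorem~\ref{main2} is not enough to conclude metabolicity, so one must revisit the geometric construction in the proof of Theorem~\ref{main3} to produce an honest half-dimensional isotropic subspace. The value $\chi(\Sigma)=1$ is precisely what makes $\dim H_1(\Sigma\cup S)$ and $\dim H_1(S)$ coincide, and the parity argument is what ensures that these dimensions are even.
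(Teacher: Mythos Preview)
Your argument is correct and follows essentially the same route as the paper's one-line proof, which simply says ``Combine Theorem~\ref{main3} and Proposition~\ref{meta}.''

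One remark: your stated concern that the bare inequality $\beta(A)\le 0$ is ``not enough to conclude metabolicity'' is overstated. Once you have established (via your parity argument on $\chi(\Sigma)$) that $n=\dim H_1(S)$ is even, the bound $\beta(A)=n-2\ind(A)\le 0$ already furnishes an isotropic subspace of dimension $\ge n/2$, hence one of dimension exactly $n/2$, and $A$ is metabolic by definition. So there is no need to re-open the geometric construction inside the proof of Theorem~\ref{main3}; citing the theorem directly together with your parity observation suffices. On the other hand, your treatment of the possibly singular case in Proposition~\ref{meta} and your Gauss-lemma upgrade from $\Q[t]$ to $\Z[t]$ are genuine details that the paper's terse proof leaves to the reader (the paper itself remarks just before the corollary that, working over $\Q$, ``the statement is slightly weaker than the original ones'').
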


\begin{proof}
Combine Theorem~\ref{main3} and Proposition~\ref{meta}.
\end{proof}

Let $A$ be a Seifert matrix of $L$. Given $\omega \in S^1  \subset \C$,
the \emph{Levine-Tristram signature and nullity} of $L$ are defined as
the signature and nullity of the hermitian matrix
$(1-\omega)A + (1-\bar \omega)\transpose{A}$.
They give rise to functions $\sigma_L,\eta_L : S^1 \rightarrow \Z$.

\begin{corollary} \label{MT}  \cite{Mu,Tr}
If $L$ bounds a locally flat surface $F$ in $B^4$ without closed components,
then for all $\omega \in S^1 \setminus 1$, the following inequality holds
$$
   |\sigma_L(\omega) | - \eta_L(\omega) \le 1 - \chi(F).
$$
\end{corollary}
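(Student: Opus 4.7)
The plan is to mimic the proof of Theorem~\ref{main3} but applied to the complex Hermitian form
\[
  H_\omega := (1-\omega)A + (1-\bar{\omega})\transpose{A}
\]
on $H_1(S;\C)$, where $A$ is a Seifert matrix for a Seifert surface $S$ of $L$. The statement then follows from a coindex bound $\beta_\C(H_\omega) \le 1 - \chi(F)$ together with a linear-algebraic identification of this coindex with $|\sigma_L(\omega)|-\eta_L(\omega)$.

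First, I would record the following elementary fact, extending the Witt coindex of \S\ref{sect.witt.index} to Hermitian forms: if $H$ is a Hermitian form on an $n$-dimensional complex vector space with signature $(p,q)$ and nullity $\eta$ (so $n = p+q+\eta$), then a maximal totally isotropic subspace has dimension $\min(p,q)+\eta$, and hence the Witt coindex satisfies $\beta_\C(H) := n - 2\ind_\C(H) = |p-q|-\eta$. Applied to $H_\omega$, this gives $\beta_\C(H_\omega) = |\sigma_L(\omega)|-\eta_L(\omega)$.

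Second, I would reuse the geometric setup and the pushoff argument of Theorem~\ref{main3}. By Lemma~\ref{cobound2}, the closed surface $F\cup_L S$ bounds a compact oriented $3$-manifold $M\subset B^4$; let $i:F\cup_L S\hookrightarrow M$ and $j:S\hookrightarrow F\cup_L S$ be the inclusions and set $K = \ker(i_*\circ j_*) \subset H_1(S;\Q)$. The pushoff argument of Theorem~\ref{main3} shows that $\theta(x,y)=0$ whenever $x,y\in K$ (push the 2-chain bounding $y$ off $M$); applying the same construction with the roles of the two chains interchanged also yields $\theta(y,x)=0$. Together these imply that $H_\omega$ vanishes identically on the complexification $K\otimes_\Q \C$, so $\ind_\C(H_\omega)\ge \dim_\Q K$.

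Finally, Lemmas~\ref{linalg} and \ref{lagr} (the latter applied to $F\cup_L S$, which is connected because $S$ is and every component of $F$ meets $L$) give $\dim_\Q K \ge \dim H_1(S) - \frac{1}{2}\dim H_1(F\cup_L S)$. Combining this with $\chi(F\cup_L S) = \chi(F)+\chi(S)$ (since $\chi(L)=0$) and $\dim H_1(S)=1-\chi(S)$ yields
\[
  |\sigma_L(\omega)|-\eta_L(\omega) = \beta_\C(H_\omega) \le 2\dim\im i_* - \dim H_1(S) = 1-\chi(F).
\]
I do not expect any serious obstacle. The only subtlety is that the proof of Theorem~\ref{main3} writes only $\theta(x,y)=0$, whereas here we need $\theta(y,x)=0$ as well in order to make the \emph{Hermitian} form (not just its real part) vanish on $K\otimes\C$; this is immediate from the symmetry of the pushoff construction, but it is the one step that needs explicit comment.
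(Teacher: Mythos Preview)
Your proof is correct. The paper takes a more economical route: instead of re-running the geometric argument of Theorem~\ref{main3} for the Hermitian form $H_\omega$, it factors through the already-established bound $\beta(L)\le 1-\chi(F)$ (Theorem~\ref{main2}) and appends the purely linear-algebraic inequality $|\sigma_L(\omega)|-\eta_L(\omega)\le\beta(L)$. That inequality holds because any rational subspace on which $\theta$ vanishes bilinearly complexifies to an $H_\omega$-isotropic subspace (precisely your observation that both $\theta(x,y)=0$ and $\theta(y,x)=0$ are needed), so $\ind(\theta)\le\min(i_+,i_-)+\eta_L(\omega)$. Your approach proves the same thing but bypasses $\beta(L)$, applying the pushoff argument directly to $H_\omega$ and bounding its complex Witt coindex. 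Both arguments rely on the same geometric input (the same subspace $K$); the paper's organization is tighter because it isolates the coindex bound once and then reads off the signature inequality as an immediate algebraic consequence, whereas yours duplicates the geometric step.
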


Note that Corollary \ref{MT} were first proven for smooth surfaces.
A proof for locally flat surfaces can be found in \cite{Pow}.

\begin{proof}
Let $A$ be a Seifert matrix of $L$. 
Let $i_-$ and $i_+$ be the number of positive and negative eigenvalues of
$A(\omega)=(1-\omega)A+(1-\bar \omega) \transpose{A}$. Without loss of generality,
suppose that $i_- \leq i_+$. By definition, $\sigma_L(\omega)=i_+-i_-$ and
$i_-+i_+ +\eta(\omega)= \dim V$, and $\beta(L)=\dim V - 2 \ind(V,\theta)$.
Since $\ind(V,\theta) \leq i_- + \eta(\omega)$, one gets
$$
     \beta(L)=i_++i_-+\eta(\omega)-2 \ind(V,\theta)
     \geq i_++i_-+\eta(\omega) - 2i_- -2 \eta(\omega)
     =  \vert \sigma_L(\omega) \vert - \eta(\omega).
$$
By Theorem \ref{main2},
$|\sigma_L(\omega)| - \eta(\omega) \le \beta(L) \le 1 - \chi(F)$.
\end{proof}


\section{Some examples}\label{sect.examples}

\subsection{Levels of lower bounds for $\chi_s$.}

Let $L$ be a link with $\Delta_L \neq 0$. Note that $L$ can be a knot.
Let $A$ be a Seifert matrix of $L$. The coindex of $L$ is denoted $\beta(L)=\beta(A)$,
see Definition \ref{sequiv}. By Theorem \ref{main2}, we have
$$
        \chi_s(L) \geq 1-\beta(L).
$$
It is natural to distinguish the following levels of lower bounds of $\beta(A)$
(on each level we discuss only the bounds which are stronger that those on lower levels).

\begin{itemize}
\item  (Level 1)
The Murasugi-Tristram inequality and the Fox-Milnor Theorem, see Corollaries \ref{FM} and \ref{MT}.
\item  (Level 2)
$\beta(A) \geq \beta(A+\transpose{A})$
(this can be used to prove that $\beta(A)\geq c$ for some constant $c$,
only in the case $c \leq 4$; see \ref{sect.Serre}).
\end{itemize}

Let $\alpha_L$ be the concordance class of the isometric structure of $A$.
The next two levels are applicable only for proving the estimate $\beta(A) \geq c$
for $c=2$ or $c=3$.

\begin{itemize}
\item (Level 3)
Application of Levine's theorem \cite{Lev1} (for $c=2$), or Theorem \ref{main2}
(for $c=3$) using the splitting of $\alpha_L$ according to the factorization
of $\Delta_L$ over $\Bbb Q$.
\item (Level 4)
The same as Level 3 but for a splitting of $\alpha_L$ according to the factorization
of $\Delta_L$ over $\Bbb Q_p$ for some prime $p$.
\end{itemize}


\begin{example}
\emph{A computation of $\chi_s(L)$ using a Level 2 lower bound for $\beta(A)$.}
Let $L$ be the link $L^{8a}_{15}(0,0)$ according to the notation
of LinkInfo \cite{linkinfo}. See Figure~\ref{l2}.

\begin{figure}
	\centering
		\includegraphics[scale=1]{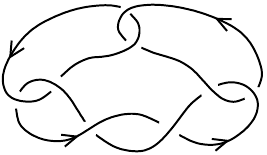} 
	\caption{The link $L^{8a}_{15}(0,0)$.}
	\label{l2}
\end{figure}

The Seifert surface constructed by the standard algorithm via the Seifert circles
of the link diagram in Figure \ref{l2} has Euler characteristic $-3$, thus
$\chi_s(L)\geq -3$. The corresponding Seifert matrix (in some basis) is
$$
  A =  \begin{pmatrix} 
    1&0&0&1\\
    0&1&0&1\\
    0&0&1&1\\
    0&0&0&-1
   \end{pmatrix}.
$$
The Alexander polynomial is $-(t-1)^2(t^2-5t+1)$. All its roots are real and
the signature of $A+\transpose{A}$ is $2$. Hence there is no level 1 proof that
$\chi_s(L)\le-3$.

Let us show that $\beta(A+\transpose{A})=4$.
The diagonalization of $A+\transpose{A}$ is $[2,2,2,-7/2]$.
The Hilbert symbol over $\Q_2$ of any two
diagonal entries is $1$, hence the corresponding Hasse invariant $\varepsilon$
and the discriminant $d$ (see \S\ref{sect.Serre}) 
are $1$ and $-28$ respectively. We also have $(-1,-1)=-1\ne\varepsilon$, and
$-28$ is a square in $\Bbb Q_2^\times$ (see \cite[Ch.~II, Thm.~4]{Ser}).
Hence the form is anisotropic (see \cite[Ch.~IV, Thm.~6]{Ser}).
We obtain $\beta(A+\transpose{A})=4$ and then $\chi_s(L)\leq -3$.
This implies $\chi_s(L)=-3$. We also deduce that $g_s(L)=1$.
\end{example}

\medskip


\begin{example}\label{ex.Liv}
\emph{ A bound $\beta(L)>0$ of Level 3 for a knot $L$.}
The idea of this example was suggested by Charles Livingston.
Let $K=12_{803}^a$ be the rational knot $21/2$ in Figure~\ref{l3}.
Let $J$ be the $(3,1)$-cable along $K$.

\begin{figure}
	\centering
		\includegraphics[scale=1]{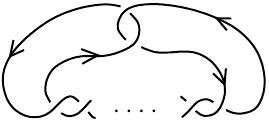} 
	\caption{The $21/2$ rational knot $12^a_{803}$.}
	\label{l3}
\end{figure}

The standard construction via
Seifert circles according to the knot diagram yields a
genus $1$ Seifert surface $S_K$ of $K$. By taking three parallel copies of it and
connecting them by two half-twisted bands (see Figure \ref{surf}), we obtain a
Seifert surface $S_J$ of $J$.

\begin{figure}
	\centering
		\includegraphics[scale=1]{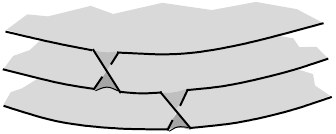} 
	\caption{Seifert surface of the $(3,1)$-cable.}
	\label{surf}
\end{figure}

The corresponding Seifert forms are
$$
    \theta_K=\theta=\begin{pmatrix} 1 & 1 \\ 0 & -5 \end{pmatrix},
    \qquad
    \theta_J= \begin{pmatrix}
                        \theta  &            \theta  & \theta\\
             \transpose{\theta} &            \theta  & \theta\\
             \transpose{\theta} & \transpose{\theta} & \theta\end{pmatrix}.
$$
(for the base of $H_1(S_J)$ we choose three parallel copies of the base of $H_1(S_K)$).
The Alexander polynomials are
$$
    \Delta_K=5t^2-11t+5, \qquad \Delta_J=5t^6-11t^3+5.
$$
As shown in \cite[Corollary~23(c)]{Lev1}, $\theta_K$ represents an element of order $4$
in $\mathcal{G}_\Q$.
Let $L=K\# K\# J\# J$.
The symmetrized forms $\theta_K+\transpose{\theta}_K$ and
$\theta_J+\transpose{\theta}_J$ are
concordant over $\Bbb Q$. Indeed,
$$
   \transpose{P}(\theta_J+\transpose{\theta}_J)P
   =(\theta_K+\transpose{\theta}_K)\oplus \begin{pmatrix}
                     0 & \theta-\transpose{\theta} \\
                     \transpose{\theta}-\theta & 0\end{pmatrix},
  \qquad
  P = \begin{pmatrix}\;\,I & -I & -I \\ -I &\;\,I & 0 \\ \;\,I & 0 &\;\,I \end{pmatrix}.
$$
Hence the symmetrized Seifert form of $L$ is metabolic, i.e.,
there is no level-2 proof that $L$ is not slice.
However, the $(\Delta_K)$-primary component of $\theta_L$ is non-trivial
in $\mathcal{G}_\Q$ (because the order of $\theta_K$ in $\mathcal{G}_\Q$ is $4$),
hence $L$ is not algebraically slice by \cite{Lev1}.
\end{example}
\medskip


\medskip
\begin{example}
\emph{A bound $\beta(L)>1$ of Level 3}.
Let $L_1=L\,\#\,l$ where $L$ is as in Example~\ref{ex.Liv} and $l$ is
any link such that $\chi_s(l)=0$ and $\Delta_l$ is coprime with $\Delta_L$ 
(for example, the two-component Hopf link).
Then $\chi_s(L_1)<0$ by Theorem~\ref{coindex1} because the
the $(\Delta_K)$- and $(\Delta_J)$-primary components
are non-trivial.
\end{example}


\medskip
\begin{example}\label{ex.(t-1)}
\emph{A Level 3 bound $\beta(L)>1$ based on the $(t-1)$-primary component}.
Let $L=K\,\#\,l_{-1}\,\#\,l_3\,\#\,l_5$ where $K$ is the figure-eight knot and
$l_k$ is the $(2,2k)$-torus link oriented so that its Seifert matrix is the
$(1\times 1)$-matrix $(k)$ (it does not matter which pairs of components are
joint by the connected sums). A Seifert matrix of $L$ is
$A=\diag(-1,3,5)\oplus\left(\smallmatrix 1 & 1\\0 &-1\endsmallmatrix\right)$.
All the Levine-Tristram signatures are $\pm1$ and the form $A+\transpose{A}$
vanishes on the subspace spanned by
$(1,0,0,1,0)\transpose{}$ and $(1,0,1,0,2)\transpose{}$. So, there is no
level-2 proof that $\chi_s(L)<0$. However, the $(t-1)$-primary component of
$A$ is $\diag(-1,3,5)$, and this is an anisotropic form.
Thus $\beta(L)>1$ by Theorem~\ref{coindex1}, and hence $\chi_s(L)<0$.

\medskip
Note that the bound $\chi_s(L)<0$ in Example~\ref{ex.(t-1)} can be obtained in
a much easier way. Indeed, $L$ has four components, hence any surface $F$ in
$B^4$ with $\partial F=L$ and $\chi(F)=0$ has at least two components, whereas $L$
is not a disjoint union of two sublinks with zero linking number between them.

In general, for links with more than two components, Theorem~\ref{coindex1} hardly
can give bounds better than those coming from the consideration of the sublinks
bounding components of $F$, in particular, using the techniques from
\cite{Flo}, \cite{CF}.
\end{example}


\medskip
\begin{example}\label{ex.lvl4}
              \emph{A bound $\beta(L)>1$ of Level 4}.
Let $L$ be the Montesinos two-component link
$\big(\frac{1}{18}; \frac{11}{4}; -\frac{1}{2}\big)$ oriented as in Figure~\ref{l4}.
On the right hand side in Figure~\ref{l4} we show a ribbon surface for $L$
with Euler characteristic $-2$, thus $\chi_s(L)\ge-2$. We are going to show
that $\chi_s(L)<0$ and thus $\chi_s(L)=-2$ (note that the bound $\chi_s(L)<0$ also
follows from Theorem~\ref{th.Rob} below because $\Arf(L)=1$).

Consider the Seifert surface $S$ shown in Figure~\ref{l4}. Choose the basis of
$H_1(S)$ formed by the loops projected to the boundaries of the regions marked
by $1,\dots,5$ in Figure~\ref{l4}. The Seifert matrix $A$ and the corresponding
monodromy $T=A^{-1}\transpose{A}$ in this basis are
$$
  A = \begin{pmatrix}
  8& 1&  0&  0&  0 \\
  1& 0&  0&  0&  0 \\
  0& 1& -1&  1&  0 \\
  0& 0&  0& -1&  1 \\
  0& 0&  0&  0& -1 \end{pmatrix},
\qquad
  T = \begin{pmatrix}
  1& 0&  1&  0& 0 \\
  0& 1& -8&  0& 0 \\
  0& 1& -8&  0& 1 \\
  0& 0& -1&  0& 1 \\
  0& 0&  0& -1& 1 \end{pmatrix}.
$$
The form $Q=A+\transpose{A}$ vanishes on the subspace spanned by
$(0,1,1,0,0)\transpose{}$ and
$(1,0,2,0,2)\transpose{}$.
Hence $\beta(Q)=1$ and there is no level-2 proof that $\chi_s(L)<0$.

\begin{figure}
	\centering
		\includegraphics[scale=0.6]{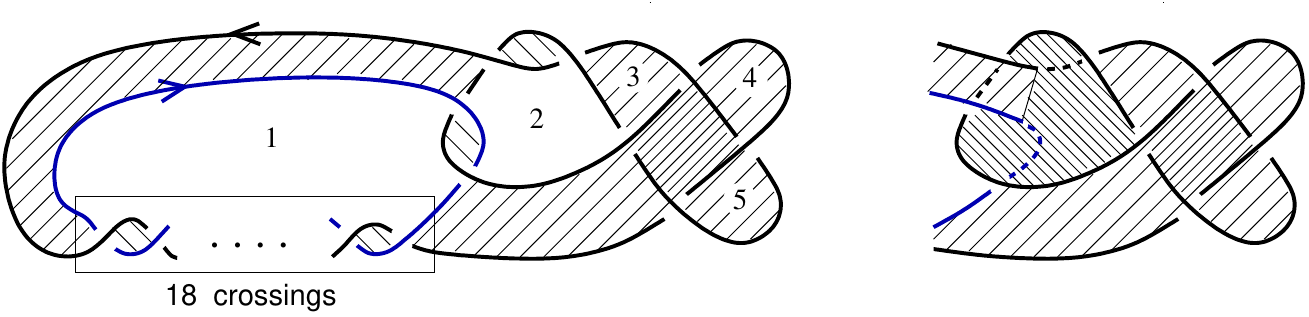} 
	\caption{Montesinos link $\big(\frac{1}{18}; \frac{11}{4}; -\frac{1}{2}\big)$; 
                 its Seifert and ribbon surfaces.}
	\label{l4}
\end{figure}

We have the following factorization over $\Q$:
$$
   \Delta_A = \Delta_T = 1+5t-12t^2+12t^3-5t^4-t^5 = (1-t)(1+6t-6t^2+6t^3+t^4).
$$
Denote the irreducible factor of degree $4$ by $\delta$.
Let $A_0$ be the standard coindex-one matrix for $\Delta_A$, and let
$Q_0=A_0+\transpose{A}_0$ and
$T_0=A_0^{-1}\transpose{A}_0$ (the companion matrix of $\Delta_A$).

The $(T-I)$-invariant vector is $(1,0,0,0,0)\transpose{}$,
hence the $\delta(T)$-invariant subspace is generated by the last four columns
of the matrix $T-I$.
Similarly we find the primary components for $T_0$
 (see~\S\ref{sect.t-1} for the $(t-1)$-component).
The $\delta$-components of $Q$ and $Q_0$ are:
$$
  Q_\delta = \begin{pmatrix}
  -2& 9& -1& -1 \\ 9& -18& 8& 2 \\ -1& 8& -2& 0 \\ -1& 2& 0& -2 \end{pmatrix},
  \qquad
  Q_{0,\delta} = \begin{pmatrix}
  0& -1& 6& -41 \\ -1& 0& -1& 6 \\ 6& -1& 0& -1 \\ -41& 6& -1& 0 \end{pmatrix}.
$$
We have $\det Q_\delta = \det Q_{0,\delta} = -128$ and the Hasse invariant of each
matrix is $-1$ in $\Q_2$ and $1$ in $\Q_p$ for $p\ne 2$. Thus there is no Level~3
bound for $\chi_s(L)$.

The factorization of $\delta$ over $\Q_2$ is $\delta=\delta_1\delta_2$ where
$$
  \delta_1=t^2 - 20t + 1\; (\text{\rm mod}\; 2^8), \qquad
  \delta_2=t^2 + 26t + 1\; (\text{\rm mod}\; 2^8).
$$
The $\delta_1$-primary components of $Q$ and $Q_0$ are
congruent to the diagonal matrices
$$
   Q_{\delta_1}\cong\diag(26,18)\; (\text{\rm mod}\; 32), \quad
   Q_{0,\delta_1}\cong\diag(36,20)\; (\text{\rm mod}\; 64).
$$
Their Hasse invariants are respectively $-1$ and $1$, whence $\beta(A)=1$ and
$\chi_s(L)=-2$.
The following is the {\tt pari-gp} code used for these computations:
\smallskip

\vbox{\begin{verbatim}
  A0=[0,0,1,0,0; 0,0,5,1,0; 0,0,-12,5,1; 1,5,0,0,0; 0,1,0,0,0]^-1;
  A=[8,1,0,0,0; 1,0,0,0,0; 0,1,-1,1,0; 0,0,0,-1,1; 0,0,0,0,-1];
  Q=A+A~; T=1/A*A~; Q0=A0+A0~; T0=1/A0*A0~; /* define A,A0,T,T0,Q,Q0 */ 
  de = t^4 + 6*t^3 - 6*t^2 + 6*t + 1 ;      /* define delta(t)       */
  fp=factorpadic(de,2,8);  /* factorize delta in Q_2[t] up to O(2^8) */
  de1=fp[1,1]; de2=fp[2,1];                 /* define delta1, delta2 */
  V=matker(subst(de1,t,T));           /* compute a 5x2-matrix whose  */
                                      /* columns span ker(delta1(T)) */
  Q1=V~*Q*V;       /* compute the restriction of Q to ker(delta1(T)) */
  D=qfgaussred(Q1);                                /* diagonalize it */
  hilbert(D[1,1],D[2,2],2)            /* compute the Hasse invariant */
  V=matker(subst(de1,t,T0)); /* do all these for Q0, ker(delta1(T0)) */
  D=qfgaussred(V~*Q0*V); hilbert(D[1,1],D[2,2],2)
\end{verbatim}
} 

\end{example}


\begin{example}  \emph{A Level 4 lower bound $\beta(L)>1$ for links
                         with zero Arf invariant.} \label{ex.ar}
Let
$$
   A_1 = \begin{pmatrix}
              -2 &\;\;0 &\;\;0 &\;\;0 &\;\;0 \\
           \;\;0 &\;\;0 &\;\;0 &\;\;1 &\;\;0 \\
           \;\;0 &\;\;1 &\;\;0 &\;\;1 &\;\;1 \\
           \;\;0 &\;\;1 &\;\;1 &\; 10 &   -1 \\
              -2 &\;\;0 &\;\;0 &\;\;0 &   -1 \end{pmatrix},
\qquad
   A_2 = \begin{pmatrix}
           \;\;0 &\;\;0 &   -3 &\;\;1 &   -1 \\
              -3 &\;\;0 &\;\;0 &   -3 &\;\;1 \\
           \;\;1 &   -3 &\;\;0 &\;\;0 &   -3 \\
              -1 &\;\;1 &   -3 &\;\;0 &\;\;0 \\
              -1 &   -1 &\;\;1 &   -3 &\;\;0 \end{pmatrix}.
$$
One easily checks that
$\transpose{U}_j\!(A_j-\transpose{A_j})U_j=(0)\oplus
   \left(\smallmatrix 0 & 1 \\ -1 & 0 \endsmallmatrix\right) \oplus
   \left(\smallmatrix 0 & 1 \\ -1 & 0 \endsmallmatrix\right)$
for some integral matrices $U_1$, $U_2$.
Hence, by \cite[Theorem~5.1.4]{KawBook},
$A_j$ is a Seifert matrix of some 2-component link $L_j$.
These links are rather big, for example, the linking number between
the components is $38$ for $A_1$ and $-110$ for $A_2$. We have
$$
  \Delta_{L_1}=2(1+7t-7t^2+7t^3-7t^4-t^5),\qquad
  \Delta_{L_2}=10(1-5t-t^2+t^3+5t^4-t^5).
$$
The pairs of vectors $(e_1,-2e_3+e_4+2e_5)$ and $(e_2, e_5)$
generate totally isotropic
subspaces for $A_1+\transpose{A}_1$ and $A_2+\transpose{A}_2$,
thus there is no level-2 bound for $\chi_s(L_j)$.
There is no level-3 bound neither. However,
there are level-4 bounds $\chi_s(L_j)<0$
coming from the factorizations of $\Delta_{L_j}$ over $\Q_2$ for $L_1$ and
over $\Q_5$ for $L_2$. The computations are the same as in
Example~\ref{ex.lvl4}.

In contrary to Example~\ref{ex.lvl4}, the Arf-Robertello invariant
(see~\S\ref{sect.arf}) vanishes for $L_1$ and $L_2$, and we do not know
any proof of the bound $\chi_s(L_j)<0$ without Theorem~\ref{coindex1}.
\end{example}


\subsection{Arf-Robertello invariant: $g_s(L)$ vs.~$\chi_s(L)$}\label{sect.arf}

The {\it Arf invariant} $\Arf(K)$ of a knot $K$ can be defined as the
Arf invariant (see, e.g.~\cite{MH}) of the quadratic
form $q:H_1(S;\F_2)\to\F_2$, $q(x)=\theta(x,x)$, where $\theta$ is
the reduction mod 2 of a Seifert form on a Seifert surface $S$ of $K$
(see \cite{Rob}). By \cite{Mu2}, $\Arf(K)=0$ iff $\Delta_K(-1)\equiv\pm1\mod 8$.

Following Robertello \cite{Rob}, we say that a knot $K$ is {\it related} to
an oriented link $L$ if there exists a locally flat surface $F$ of genus 0
in $S^3\times I$ such that $\partial F \cap (S^3\times 0)=L$
and $\partial F\cap(S^3\times 1)=K$.
A link $L=k_1\cup\dots\cup k_\mu$ ($k_i$ are knots) is called in \cite{Rob}
{\it proper} if all the linking numbers $\lk(k_i,L\setminus k_i)$ are even.

Let $L$ be a proper link. By \cite[Theorem 2]{Rob}, the Arf invariant
of any knot related to a proper link $L$ is an invariant of $L$. 
We call it the {\it Arf-Robertello invariant} of $L$ and denote by $\Arf(L)$. 
Note that some interesting interpretations of the
Arf-Robertello invariant are found in \cite{Mou} for algebraically split links.
If $g_s(L)=0$, then the unknot is related to $L$, which gives the following
lower bound for the slice genus.

\begin{theorem} \label{th.Rob}
{\rm(Essentially \cite{Rob})}
If $L$ is a proper link and $g_s(L)=0$, then $\Arf(L)=0$.
\end{theorem}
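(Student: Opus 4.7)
The plan is to show that the unknot is a knot related to $L$ in Robertello's sense; since the Arf invariant of the unknot vanishes, the invariance assertion already quoted from \cite{Rob} (namely, that for any two knots $K_1,K_2$ related to the same proper link $L$ one has $\Arf(K_1)=\Arf(K_2)$) will then immediately give $\Arf(L)=0$.

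To exhibit such a genus-zero cobordism in $S^3\times I$, I would start from the hypothesis $g_s(L)=0$: this provides a disjoint union of locally flat disks $F=D_1\sqcup\cdots\sqcup D_\mu$ in $B^4$ with $\partial F=L$. I would then pick an interior point $p\in D_1$ and use local flatness of $F$ at $p$ to choose a small closed 4-ball $B\subset\operatorname{int}(B^4)$ centered at $p$ such that the pair $(B,B\cap F)$ is homeomorphic to the standard disk pair $(D^4,D^2)$ and $B$ is disjoint from $D_2\sqcup\cdots\sqcup D_\mu$. The complement $W:=B^4\setminus\operatorname{int}(B)$ is then homeomorphic to $S^3\times I$, with $S^3\times\{0\}=\partial B^4$ and $S^3\times\{1\}=\partial B$, and $F':=F\cap W$ is the disjoint union of the planar annulus $D_1\setminus\operatorname{int}(B\cap D_1)$ together with the disks $D_2,\dots,D_\mu$. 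Thus $F'$ is a locally flat genus-zero surface in $W$ satisfying $F'\cap(S^3\times\{0\})=L$ and $F'\cap(S^3\times\{1\})=U$, where $U\subset\partial B$ is a small unknot; by definition $U$ is related to $L$, and the theorem follows as indicated above.

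There is no serious obstacle in this argument. The only non-formal input is the existence of the standard ball-pair around $p\in\operatorname{int}(D_1)$, which is immediate from the local flatness of $F$ at the interior point $p$; the genus-zero condition and the identification of the two boundary components of $F'$ are verified by inspection of each component. If the reader prefers Robertello's original convention in which the cobordism is required to be connected, $\mu-1$ bands may be added in the interior of $W$ to amalgamate the components of $F'$ without changing its boundary or increasing its genus, leaving $U$ unchanged and producing a connected surface.
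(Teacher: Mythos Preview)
Your argument is correct and follows exactly the approach the paper indicates in the sentence preceding the theorem: ``If $g_s(L)=0$, then the unknot is related to $L$.'' You have simply spelled out the construction of the genus-zero cobordism that the paper leaves implicit.

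One minor inaccuracy: the hypothesis $g_s(L)=0$ only guarantees a genus-zero surface $F\subset B^4$ with $\partial F=L$, not necessarily a disjoint union of disks; $F$ could for instance be a connected planar surface. This does not affect your argument at all --- removing a small disk around an interior point $p\in F$ still yields a genus-zero surface $F'$ in $W\cong S^3\times I$ with $\partial F'=L\sqcup U$ --- so you may simply drop the clause ``$F=D_1\sqcup\cdots\sqcup D_\mu$'' and the rest goes through verbatim.
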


Given a diagram of a proper link $L$ with $\mu$ components,
one can compute $\Arf(L)$ using any knot obtained from $L$ by resolving
$\mu-1$ crossings. $\Arf(L)$ can be also computed from a Seifert form of $L$.

\begin{proposition} \label{prop.arf}
Let $S$ be a Seifert surface of a proper link $L$,
and $\theta$ be the Seifert form on $V=H_1(S;\F_2)$. Let $q(x)=\theta(x,x)$
be the corresponding quadratic form on $V$.
Let $W$ be any complementary subspace for $\ker\theta$, i.e.,
$V=W\oplus\ker\theta$. Then $\Arf(L)$ is the Arf invariant of $q|_W$.
\end{proposition}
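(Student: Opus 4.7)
The plan is to produce a knot $K$ related to $L$ in Robertello's sense whose Arf invariant can be read off from $q|_W$ via the additivity of Arf under orthogonal sums and its vanishing on hyperbolic pieces.

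First I would pin down $\ker\theta$: for the Arf invariant of $q|_W$ to be well defined, this must be the radical of the mod-$2$ symmetrization $b:=\theta+\transpose\theta$, which equals the radical of the mod-$2$ intersection form on $H_1(S;\F_2)$ and is the $(\mu-1)$-dimensional subspace spanned by the boundary classes $[\partial_1],\dots,[\partial_\mu]$ (subject to $\sum[\partial_i]=0$). Since $L$ is proper,
\[
   q([\partial_i])=\theta([\partial_i],[\partial_i])=\lk(k_i,k_i^+)\equiv\lk(k_i,L\setminus k_i)\equiv 0\pmod{2},
\]
where $k_i^+$ denotes the Seifert push-off. Because $\ker\theta$ is $b$-isotropic, the polarization identity $q(x+y)=q(x)+q(y)+b(x,y)$ forces $q|_{\ker\theta}$ to be $\F_2$-linear, hence to vanish on all of $\ker\theta$. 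Thus $q$ descends to a non-degenerate quadratic form $\bar q$ on $V/\ker\theta$, and every complement $W$ carries an isomorphic copy of $\bar q$, so $\Arf(q|_W)$ is well defined and independent of the choice of $W$.

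Next I would build $K$ by attaching $\mu-1$ bands to $L$ that sequentially fuse $k_1,k_2,\dots,k_\mu$ into one component. These bands extend $S$ to a Seifert surface $S_K$ of $K$; together with $L\times I$ they form a planar cobordism in $S^3\times I$ from $L$ to $K$, so $\Arf(L)=\Arf(K)=\Arf(q_K)$, where $q_K$ is the mod-$2$ quadratic form on $H_1(S_K;\F_2)=V\oplus U$ and $U=\langle e_1,\dots,e_{\mu-1}\rangle$ is spanned by loops threading the bands. With each $e_j$'s internal arc routed through a thin collar of $\partial S$, the pairing $b_K([\partial_i],e_j)$ records whether $k_i$ is an endpoint of band $b_j$; for the sequential configuration this matrix is lower-bidiagonal with unit diagonal, hence unimodular over $\F_2$.

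Set $H:=\ker\theta\oplus U$. The unimodular pairing $\ker\theta\times U$ makes $b_K|_H$ non-degenerate, and $\ker\theta$ is a Lagrangian of $(H,b_K|_H)$ on which $q_K$ vanishes, so $q_K|_H$ is hyperbolic with $\Arf=0$. Decomposing $H_1(S_K;\F_2)=H\oplus H^\perp$, a dimension count together with the non-degeneracy above shows $H^\perp\subset V$ is itself a valid complement to $\ker\theta$ in $V$, and $q_K|_{H^\perp}=q|_{H^\perp}$ since $\theta_K$ restricts to $\theta$ on $V$. Additivity of Arf and $W$-independence then give
\[
  \Arf(L)=\Arf(q_K)=\Arf(q_K|_H)+\Arf(q_K|_{H^\perp})=\Arf(q|_W).
\]
The main obstacle is the band construction in the third paragraph: one must justify that the $e_j$'s can simultaneously be chosen so the pairing matrix $b_K([\partial_i],e_j)$ is unimodular and the $e_j$ genuinely form a direct summand in $H_1(S_K;\F_2)$. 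Routing each interior arc through a thin collar of $\partial S$ handles this, but requires careful topological bookkeeping. The classical identity $\theta([\partial_i],[\partial_i])\equiv\lk(k_i,L\setminus k_i)\pmod 2$ (encoding that the Seifert framing of $k_i$ matches its total linking with $L\setminus k_i$) also deserves care to verify.
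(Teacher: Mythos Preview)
The paper states this proposition without proof, so there is no argument to compare against; your approach is sound and is the natural one.  Your interpretation of $\ker\theta$ as the radical of $b=\theta+\transpose\theta$ (equivalently the mod-$2$ radical of the intersection form, spanned by the boundary classes) is the only one for which $\Arf(q|_W)$ is defined, and the properness hypothesis is precisely what makes $q$ vanish on this radical.  The fusion-band construction of a related knot $K$, the hyperbolic splitting $H_1(S_K;\F_2)=H\oplus H^\perp$ with $H=\ker\theta\oplus U$, and the identification of $H^\perp$ as a complement to $\ker\theta$ inside $V$ all go through.  For the inclusion $H^\perp\subset V$, which you pass over quickly, the point is that for $x=x_V+x_U$ one has $b_K(x_V,[\partial_i])=b(x_V,[\partial_i])=0$ automatically, so $b_K(x,[\partial_i])=0$ for all $i$ forces $x_U=0$ by the unimodularity of your pairing matrix.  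The two items you flag at the end (unimodularity of $(b_K([\partial_i],e_j))$ for sequential bands, and the Seifert-framing identity $\theta([\partial_i],[\partial_i])\equiv\lk(k_i,L\setminus k_i)\pmod 2$) are routine verifications.
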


As we pointed out in the introduction, for an $n$-component link $L$,
the bound $2g_s(L)\ge 2-n-\chi_s(L)$ is not always sharp.
A strict inequality takes place when the maximal Euler characteristic
cannot be attained on a connected surface in $B^4$.

The simplest example is the Whitehead link $L=L^{5a}_1$
(the braid closure of $\sigma_1\sigma_2^{-1}\sigma_1\sigma_2^{-1}\sigma_1$).
By switching the central crossing $\sigma_1$,
one obtains a trivial 2-component link.
Hence $L$ bounds a disjoint union of a disk and a genus 1 surface.
Thus $\chi_s(L)=0$ and $g_s(L)\le 1$. On the other hand,
resolving one of the crossings $\sigma_2^{-1}$, one obtains a trefoil.
Hence $\Arf(L)=1$ and $g_s(L)\ge 1$. So, we obtain $2g_s(L)=2>0=2-n-\chi_s(L)$.


\subsection{What is a good definition of algebraically slice Euler characteristic}
\label{sect.slice.chi}

As shown in \cite[Theorem~3]{Tay}, if $A$ is a Seifert matrix of a knot $K$,
then there exists a knot $K'$ whose ribbon genus (and hence the slice genus as well)
is $\frac12\beta(A)$. So, it is natural to call $\frac12\beta(A)$ the
{\it algebraic slice genus} of $A$ and of $K$.

The corresponding statement for links is false as one can see in the following example.

\begin{example}\label{ex.arf}
Let $L$ be the connected sum of the trefoil and
the torus link $T(2,-8)$ oriented so that the Seifert matrix of $L$ is
$A=(-4)\oplus\left(\smallmatrix 1&1\\0&1\endsmallmatrix\right)$.
We have $\transpose{x}\!Ax=0$ for $x=(1,2,0)\transpose{}$, hence $\beta(A)=1$.
Let us show that there does not exist any link $L'$ such that
$A$ is a Seifert matrix of $L'$ and $\chi_s(L')=0$.
Indeed, suppose that such a link $L'$ does exist. The kernel of $A-\transpose{A}$ is
generated by $x_0=(1,0,0\transpose{)}$, hence $L'=k_1\cup k_2$ is a $2$-component link,
$\pm x_0$ is the homology class of $k_1$, and $\lk(k_1,k_2)=-4$, in particular,
$L'$ is a proper link (see~\S\ref{sect.arf}). By Proposition~\ref{prop.arf} we have
$\Arf(L')=\Arf(K)=1$, hence $g_s(L')>0$ by Theorem~\ref{th.Rob}.
Since $\lk(k_1,k_2)\ne 0$, any surface in $B^4$ bounding $L'$ is connected, whence
$\chi_s(L')<0$.
\end{example}

A candidate for a good notion of algebraic slice Euler characteristic of
a link with Seifert matrix $A$ is $1-\bsat(A)$ where
$\bsat(A)=n-2k$, $n$ is the size of $A$, and $k$ is the maximal rank of a totally
isotropic subgroup $U$ of $\Z^n$ such that $U+\ker(A-\transpose{A})$ is
a saturated subgroup of $\Z^n$ (a subgroup $L$ of $\Z^n$ is called
{\it saturated} if $\Z^n/L$ is torsion-free).

We do not know whether $\bsat$ is invariant even under S-equivalence.
We neither know any upper bound for $\bsat(A)$ better than $\beta(A)$. 
However, the following two observations (Propositions~\ref{prop.rib1} and
\ref{prop.rib2}) give a hope that $\bsat$ could play some r\^ole.
We define the {\it ribbon Euler characteristic} $\chi_r(L)$ of a
link $L$ as the maximum of $\chi(F)$ over all ribbon immersions of surfaces
without closed components $r:F\to S^3$ such that $L=r(\partial F)$.
It is well-known that $\chi_s(L)\ge\chi_r(L)$

\begin{proposition}\label{prop.rib1}
Any link $L$ has a Seifert matrix $A$ such that $\chi_r(L)\le 1-\bsat(A)$.
\end{proposition}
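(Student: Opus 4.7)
The plan is to refine the proof of Theorem~\ref{main3}, replacing $\mathbb{Q}$-coefficients by $\mathbb{Z}$-coefficients and using the ribbon hypothesis to produce the saturation condition required by $\bsat$ via a $3$-manifold in $B^4$ with torsion-free first integral homology.

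First I would take a ribbon surface $F$ for $L$ realizing $\chi(F)=\chi_r(L)$ and push it slightly into $B^4$ to obtain an embedded surface $F^*\subset B^4$ with $\partial F^*=L$. The ribbon presentation of $F$ by $0$- and $1$-handles (disks and bands) in $S^3$, together with the standard tube-surgery resolution of the finitely many ribbon singularities, yields an embedded Seifert surface $S\subset S^3$ for $L$ and a compact $3$-manifold $M\subset B^4$ with $\partial M = F^*\cup_L S$. Concretely, $M$ is obtained from a thickening of $F^*$ in the normal direction towards $S^3$ by inserting one $3$-dimensional $1$-handle (a tube routed around the other sheet) at each ribbon singularity, with $S$ the induced surface on the $S^3$ side. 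The crucial feature is that $M$ deformation retracts onto a $1$-complex read off from the ribbon handle decomposition, so $H_1(M;\mathbb{Z})$ is free abelian.

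Let $A$ be the Seifert matrix of $L$ on $S$ and set $K:=\ker\bigl(H_1(S;\mathbb{Z})\to H_1(M;\mathbb{Z})\bigr)$. The argument of Theorem~\ref{main3} applies verbatim over $\mathbb{Z}$ to show that the Seifert form $\theta$ vanishes on $K$: for $x,y\in K$, pick $2$-chains $\alpha,\beta\subset M$ with $\partial\alpha=x$, $\partial\beta=y$, push $\beta$ to the positive normal side to get $\beta^+\subset B^4\setminus M$, and observe $\theta(x,y)=\alpha\cdot\beta^+=0$. Since $L=\partial F^*$ bounds in $F^*\subset M$, one has $\ker(A-\transpose{A})=\im\bigl(H_1(L;\mathbb{Z})\to H_1(S;\mathbb{Z})\bigr)\subset K$, so taking $U:=K$ gives $U+\ker(A-\transpose{A})=K$. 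Because $H_1(S;\mathbb{Z})/K$ embeds in the torsion-free group $H_1(M;\mathbb{Z})$, the subgroup $K$ is saturated in $H_1(S;\mathbb{Z})$.

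The rank estimate from the proof of Theorem~\ref{main3}---obtained by applying Lemma~\ref{lagr} to $M$ and Lemma~\ref{linalg} to the composition $H_1(S;\mathbb{Q})\to H_1(\partial M;\mathbb{Q})\to H_1(M;\mathbb{Q})$---then gives $\dim H_1(S;\mathbb{Q})-2\,\mathrm{rk}\,K\le 1-\chi(F)$, so
\[
\bsat(A)\;\le\;\dim H_1(S;\mathbb{Q})-2\,\mathrm{rk}\,K\;\le\;1-\chi(F)\;=\;1-\chi_r(L),
\]
as required. The hard part will be the geometric construction of $M$ with $H_1(M;\mathbb{Z})$ torsion-free: this depends essentially on the handle decomposition of the ribbon surface and on routing the resolution tubes correctly. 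No such filling is available for a generic locally flat slice surface, which is exactly why the bound is phrased in terms of $\chi_r(L)$ rather than $\chi_s(L)$.
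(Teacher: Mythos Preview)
Your approach---building a $3$-manifold $M\subset B^4$ with free $H_1(M;\Z)$ from the ribbon data and then rerunning the argument of Theorem~\ref{main3} over $\Z$---is genuinely different from the paper's proof. The paper stays entirely in dimension two: it applies Kauffman's trick (Figure~\ref{kauf}) to produce $S$, takes $U=\langle x_1,\dots,x_k\rangle$ to be the span of the tube meridians, and checks that $U+\ker(A-\transpose{A})$ is saturated by the elementary fact that pairwise disjoint simple closed curves on a surface span a direct summand of its first homology. No $3$- or $4$-manifold is needed. Your route is more conceptual (it explains \emph{why} a ribbon surface, as opposed to a general slice surface, should give saturation: the bounding $3$-manifold is a handlebody), at the cost of a more delicate geometric construction of $M$.

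There is, however, a genuine gap in your saturation step. The assertion $\ker(A-\transpose{A})\subset K$ is false for links with more than one component. Take $F$ to be an embedded annulus with no ribbon singularities: then $S=F$, $M\simeq F$, $K=0$, while $\ker(A-\transpose{A})=H_1(S;\Z)\cong\Z$. The sentence ``$L=\partial F^*$ bounds in $F^*\subset M$'' only yields $\sum_j[\ell_j]=0$ in $H_1(M)$; individual boundary components need not die. What you actually need is that $K+\ker(A-\transpose{A})$ is saturated, and this \emph{does} follow from your setup, just not for the reason you give. With your handle description one has $H_1(M;\Z)\cong H_1(F;\Z)\oplus\Z^k$, and under $\phi:H_1(S;\Z)\to H_1(M;\Z)$ the image of $\ker(A-\transpose{A})=\im\big(H_1(L)\to H_1(S)\big)$ is exactly $\im\big(H_1(\partial F)\to H_1(F)\big)\subset H_1(F)\subset H_1(M)$. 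Since this last subgroup is the radical of the intersection form on $F$ and $H_1(F,\partial F;\Z)$ is free, it is saturated in $H_1(F)$ and hence in $H_1(M)$; therefore its $\phi$-preimage $K+\ker(A-\transpose{A})$ is saturated in $H_1(S;\Z)$. With this correction your argument goes through.
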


\begin{figure}
	\centering
		\includegraphics[scale=0.42]{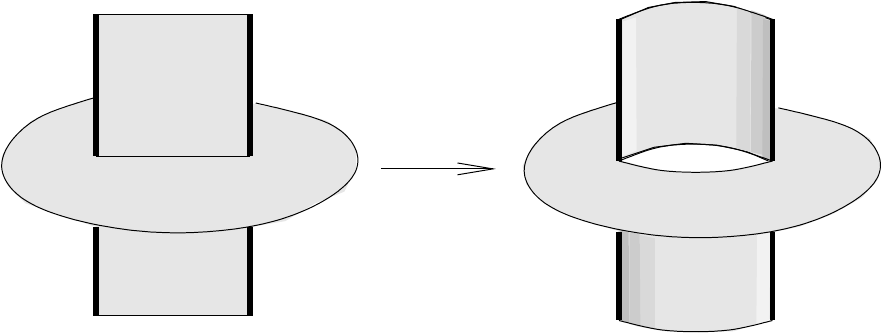} 
	\caption{Kauffman's trick; cf.~\cite[pp.~212--213]{Kau}.}
	\label{kauf}
\end{figure}

\begin{proof}
Let $r:F\to S^3$, $r(\partial F)=L$, be a ribbon immersion with $\chi(F)=\chi_r(L)$.
Without loss of generality we may assume that $r(F)$ is connected.
Using Kauffman's trick (see Figure~\ref{kauf}), we transform $F$ into
a Seifert surface $S$. Denote the Seifert form on $H_1(S)$ by $\theta$.
Let $x_1,\dots,x_k$ be the simple closed loops corresponding to
the boundaries the disks depicted in Figure~\ref{kauf}. They generate a totally
isotropic subgroup $U$ of $H_1(S)$ of rank $k$. It is clear that $\chi(S)=\chi(F)-2k$.
The kernel of $\theta-\transpose{\theta}$ is generated by the components of $\partial L$.
Thus $U+\ker(\theta-\transpose{\theta})$ is generated by pairwise disjoint simple
closed curves on $S$, hence it is a saturated subgroup of $H_1(S)$ whence the result.
\end{proof}

\begin{proposition} \label{prop.rib2}
Let $A$ be a Seifert matrix of a link $L$. Then there exists a link $L'$
such that $\chi_r(L')\ge 1-\bsat(A)$ and $A$ is a Seifert matrix of $L'$.
\end{proposition}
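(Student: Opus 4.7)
The plan is to take $L' := L$ itself and construct the required ribbon surface $F$ directly, using the isotropic subgroup that witnesses $\bsat(A)$. Let $S$ be a Seifert surface of $L$ with Seifert form given by $A$ in some basis of $H_1(S;\mathbb{Z})\cong\mathbb{Z}^n$, so that $\chi(S)=1-n$. Fix a totally isotropic subgroup $U\subset\mathbb{Z}^n$ of rank $k$ with $\bsat(A)=n-2k$ such that $U+\ker(A-\transpose{A})$ is saturated in $\mathbb{Z}^n$.

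The key geometric input is a realization statement: the saturation condition, combined with the identification of $\ker(A-\transpose{A})$ with the subgroup of $H_1(S)$ generated by boundary-parallel loops, allows one to choose $k$ pairwise disjoint simple closed curves $\alpha_1,\dots,\alpha_k$ in the interior of $S$ whose classes form a basis of $U$ and whose complement $S\setminus\bigcup_i \alpha_i$ remains connected. This is a standard consequence of the theory of simple closed curve systems on compact orientable surfaces: saturated subgroups of $H_1(S;\mathbb{Z})$ containing the boundary subgroup correspond to systems of disjoint simple closed curves with connected complement. Note that this is exactly the converse of the ingredient used in the proof of Proposition~\ref{prop.rib1}.

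Given such a system, I would perform a reverse Kauffman construction, following Figure~\ref{kauf} in reverse. For each $i$, remove a small annular tubular neighborhood of $\alpha_i$ in $S$ and cap the two resulting boundary circles with disks $D_i^+$ and $D_i^-$, pushed slightly into the positive and negative normal sides of $S$ in $S^3$ respectively. Call the resulting immersed surface $F \to S^3$. Then $\partial F = \partial S = L$, and $\chi(F)=\chi(S)+2k=(1-n)+2k=1-\bsat(A)$. Moreover, $F$ has no closed components because $S\setminus\bigcup_i\alpha_i$ is connected, so every capping disk is glued to the component of $S$ that contains $\partial S$.

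It remains to see that $F$ is a genuine \emph{ribbon} immersion. The isotropy $\theta(\alpha_i,\alpha_j)=0$ says precisely that the positive push-off of $\alpha_i$ has vanishing linking number with $\alpha_j$ in $S^3$ for all $i,j$, which is exactly the condition needed to arrange the disks $D_i^\pm$ so that their intersections with the rest of $F$ are interior-to-interior transverse arcs (ribbon singularities) rather than clasp singularities. I expect the main obstacle to be this last verification: while Figure~\ref{kauf} shows locally how a ribbon singularity converts into (and so comes from) a tube surgery along an isotropic curve, assembling the global picture when several $\alpha_i$'s interact requires a careful bookkeeping, cleanest in a banded presentation of $S$ adapted to the $\alpha_i$. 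Once established, we conclude $\chi_r(L)\ge\chi(F)=1-\bsat(A)$, as required.
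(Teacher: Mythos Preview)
Your choice $L':=L$ is too strong and in fact false. If your argument worked, it would prove $\chi_r(L)\ge 1-\bsat(A)$ for the \emph{given} link $L$. Specialize to knots: then $\ker(A-\transpose{A})=0$, so the saturation condition is automatic and $\bsat(A)=\beta(A)$. Your argument would thus show that every algebraically slice knot is ribbon, contradicting Casson--Gordon \cite{CG1,CG2}. The proposition only asserts the existence of \emph{some} link $L'$ with Seifert matrix $A$; the paper exploits this freedom by building $L'$ from scratch via Seifert's realization \cite{Sei}, arranging the bands so that the curves to be compressed are visibly unknotted and bound obvious ribbon disks (Figures~\ref{fig.seif}--\ref{fig.ribb}).

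The specific step that fails is the ``reverse Kauffman'' construction. The condition $\theta(\alpha_i,\alpha_j)=0$ controls only linking numbers; it says nothing about whether the push-offs $\alpha_i^\pm\subset S^3$ are unknotted, and if they are knotted they bound no disks at all. Even when they are unknotted, the disks they bound may meet $S$ in clasps that cannot be removed. The forward Kauffman trick produces curves that are tiny meridians of tubes---unknotted by construction---and it is precisely this geometric feature, not the algebraic isotropy, that makes the reverse step work. On an arbitrary Seifert surface for $L$ the curves realizing $U$ need not have this property, so your capping procedure is not well defined, and no amount of bookkeeping will fix it.
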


\begin{proof} The construction of a ribbon surface is more or less the same
as in the proof of \cite[Theorem~3]{Tay}. Let $(V,\theta)$, $V=\Z^n$,
be the bilinear form defined by $A$ and let $\iota=\theta-\transpose{\theta}$
(the intersection form on the Seifert surface of $L$). Let $K=\ker\iota$ and
let $U$ be a totally isotropic subgroup of $V$ of the appropriate rank such that 
$U+K$ is saturated. Since $K$ is generated by the boundary components of a Seifert
surface of $L$, it is saturated. It is clear that $U$ is saturated as well.
Then we have
$V=V_1\oplus\dots\oplus V_4$ where $V_1\oplus V_2=K$ and $V_2\oplus V_3=U$.
Since the restriction of $\iota$ to $V_3\oplus V_4$ is
unimodular, $V_4$ splits into $V_4'\oplus V_4''\oplus V_4'''$ so that, for some bases
$(x_1,\dots,x_p)$, $(x'_1,\dots,x'_p)$, $(x''_1,\dots,x''_q)$, $(x'''_1,\dots,x'''_q)$
of, respectively, $V_3$, $V'_4$, $V''_4$, $V'''_4$, we have
$\iota(x_i,x'_j)=\delta_{ij}$, $\iota(x''_i,x'''_j)=\delta_{ij}$, and $V_3\oplus V'_4$
is $\iota$-orthogonal to $V''_4\oplus V'''_4$.
Let $(u_1,\dots,u_r)$ and $(v_1,\dots,v_s)$ be some bases of $V_1$ and $V_2$
respectively.

\begin{figure}
	\centering
		\includegraphics[scale=0.6]{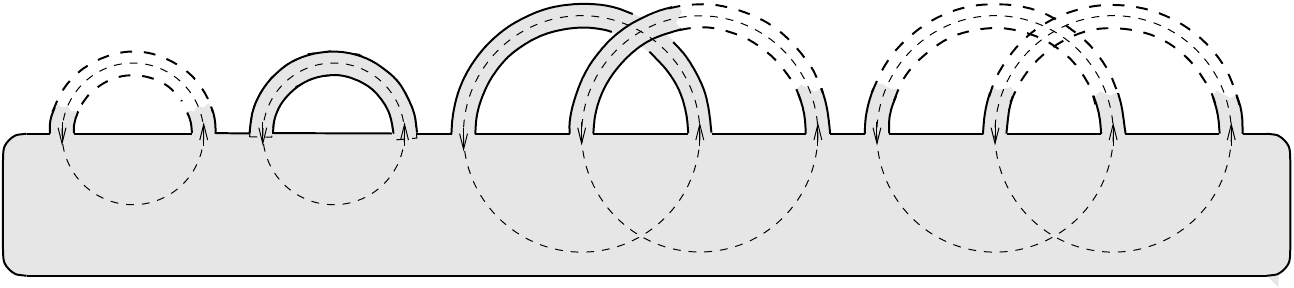} 
        \put(-360,25){$u_i$}\put(-303,25){$v_i$}\put(-247,23){$x_i$}
        \put(-214,23){$x'_i$}\put(-130,23){$x''_i$}\put(-98,23){$x'''_i$}
	\caption{Seifert surface in the proof of Proposition \ref{prop.rib2}.}
	\label{fig.seif}
\end{figure}

\begin{figure}
	\centering
		\includegraphics[scale=0.6]{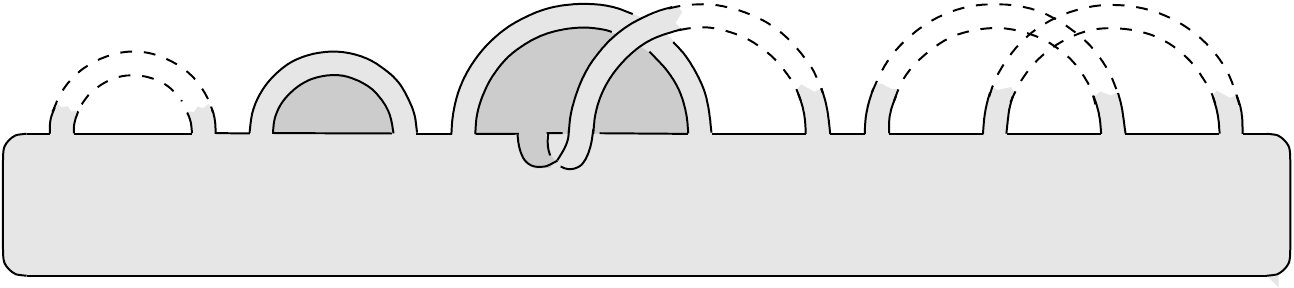} 
	\caption{Ribbon surface in the proof of Proposition \ref{prop.rib2}.}
	\label{fig.ribb}
\end{figure}

By the classical Seifert's method \cite{Sei}
(see also \cite[Theorem~5.1.4]{KawBook}), we construct $L'$ as the
boundary of an embedded surface $S$ shown in Figure~\ref{fig.seif} where
the dashed parts of the bands are supposed to be twisted and linked with each other and
with the solid bands so that they realize the given Seifert form.
Then, near the bands corresponding to the cycles $v_i$ and $x_i$, we modify $S$
as shown in Figure~\ref{fig.ribb}.
After this modification, each band that was linked with $v_i$ and/or $x_i$ gets
ribbon intersections with the modified piece of the surface.
\end{proof}

\subsection{On Milnor's remark}

The results of Levine \cite{Lev1} are based, in particular, on
Milnor's theorem \cite[Thm.~2.1]{Mil} which states that an
isometric structure $(V,b,T)$ over a field $\F$ is determined up to isomorphism
by $(V,b)$ and the minimal polynomial for $T$ under the condition that
$\F$ is $\R$ or $\Q_p$, and the minimal polynomial is irreducible.

It is said in \cite[p.~87]{Mil}:
{\sl ``Note that the corresponding statement for an arbitrary field would
definitely be false (e.g.~for the field of rational numbers)''.}
The following example (proposed by Andrey Levin) illustrates that
this statement is indeed false for $\F=\Q$.

\begin{example}
Let $\rho=\exp(2\pi i/5)$, $V=\Q(\rho)$, and let $T:V\to V$ be given by
$x\mapsto\rho x$.
For $k\in\{1,2\}$, we set $a_k=\rho^k+\rho^{-k}=2\cos(2\pi k/5)$
and we define the symmetric bilinear
form $b_k:V\times V\to\Q$ as $b_k(x,y)=\text{trace}(a_k x\bar y)$.
Then $T$ preserves $b_k$, the bilinear forms $b_1$ and $b_2$ are congruent, and 
the minimal polynomial of $T$ is $\Phi_5(t)=t^4+t^3+t^2+t+1$ which is irreducible
over $\Q$.

Let us show that the isometric structures $(V,b_1,T)$ and $(V,b_2,T)$
are not isomorphic.
We have $\Phi_5(t)=\delta_1(t)\delta_2(t)$ where
$\delta_k(t)=t^2-a_k t+1$. Let $V_k=\ker\delta_k(T_{\R})$ where $T_{\R}$
is the $\R$-linear extension of $T$ to $V\otimes\R$.
 Then $V\otimes\R=V_1\oplus V_2$ and
this decomposition is orthogonal for each of the forms $b_1$ and $b_2$
(we keep the notation $b_k$ for the extension of $b_k$
to $V\otimes\R$).
A straightforward computation shows that
the forms $b_1|_{V_1}$ and $b_2|_{V_2}$ are positive definite whereas 
the forms $b_1|_{V_2}$ and $b_2|_{V_1}$ are negative definite.
Hence the discussed isometric structures are not isomorphic.

The matrices of $T$, $b_1$, and $b_2$ in the basis $(1,\rho,\rho^2,\rho^3)$ are,
respectively,
$$
{\def\+{\hskip6pt}
   \begin{pmatrix}
      0 & 0 & 0 & -1 \\
      1 & 0 & 0 & -1 \\
      0 & 1 & 0 & -1 \\
      0 & 0 & 1 & -1 \end{pmatrix},
   \quad
   \begin{pmatrix}
      -2 &\+3 & -2 & -2 \\
     \+3 & -2 &\+3 & -2 \\
      -2 &\+3 & -2 &\+3 \\
      -2 & -2 &\+3 & -2 \end{pmatrix},
   \quad
    \begin{pmatrix}
      -2 & -2 &\+3 &\+3 \\
      -2 & -2 & -2 &\+3 \\
     \+3 & -2 & -2 & -2 \\
     \+3 &\+3 & -2 & -2 \end{pmatrix}.
}
$$
The subspace $V_k$, $k=1,2$, is spanned by the vectors
$(1,1,0,-a_k)$ and $(1,a_k^{-1},1,0)$.
The matrices of $b_1|_{V_k}$ and $b_2|_{V_k}$ in these bases are, respectively,
$$
   5a_k\begin{pmatrix} 2 & 1 \\ 1 & 2a_k^2 \end{pmatrix} \qquad\text{and}\qquad
   -5a_k^{-1}\begin{pmatrix} 2 & 1 \\ 1 & 2a_k^2 \end{pmatrix}.
$$
\end{example}

\bibliographystyle{amsalpha}
\bibliography{concord}

\providecommand{\bysame}{\leavevmode\hbox to3em{\hrulefill}\thinspace}
\providecommand{\MR}{\relax\ifhmode\unskip\space\fi MR }
\providecommand{\MRhref}[2]{%
  \href{http://www.ams.org/mathscinet-getitem?mr=#1}{#2}
}
\providecommand{\href}[2]{#2}
\begin{thebibliography}{Mur65b}

\bibitem[CF08]{CF}
David Cimasoni and Vincent Florens, \emph{Generalized {S}eifert surfaces and
  signatures of colored links}, Trans. Amer. Math. Soc. \textbf{360} (2008),
  no.~3, 1223--1264. \MR{2357695}

\bibitem[CG78]{CG2}
A.~J. Casson and C.~McA. Gordon, \emph{On slice knots in dimension three},
  Algebraic and geometric topology ({P}roc. {S}ympos. {P}ure {M}ath.,
  {S}tanford {U}niv., {S}tanford, {C}alif., 1976), {P}art 2, Proc. Sympos. Pure
  Math., XXXII, Amer. Math. Soc., Providence, R.I., 1978, pp.~39--53.
  \MR{520521}

\bibitem[CG86]{CG1}
\bysame, \emph{Cobordism of classical knots}, \`A la recherche de la topologie
  perdue, Progr. Math., vol.~62, Birkh\"{a}user Boston, Boston, MA, 1986, With
  an appendix by P. M. Gilmer, pp.~181--199. \MR{900252}

\bibitem[DF04]{DF}
David~S. Dummit and Richard~M. Foote, \emph{Abstract algebra}, third ed., John
  Wiley \& Sons, Inc., Hoboken, NJ, 2004. \MR{2286236}

\bibitem[FL18]{FL}
Peter Feller and Lukas Lewark, \emph{On classical upper bounds for slice
  genera}, Selecta Math. (N.S.) \textbf{24} (2018), no.~5, 4885--4916.
  \MR{3874707}

\bibitem[Flo04]{Flo}
V.~Florens, \emph{On the {F}ox-{M}ilnor theorem for the {A}lexander polynomial
  of links}, Int. Math. Res. Not. (2004), no.~2, 55--67. \MR{2040323}

\bibitem[FM66]{FM}
Ralph~H. Fox and John~W. Milnor, \emph{Singularities of {$2$}-spheres in
  {$4$}-space and cobordism of knots}, Osaka Math. J. \textbf{3} (1966),
  257--267. \MR{211392}

\bibitem[FQ90]{FQ}
Michael~H. Freedman and Frank Quinn, \emph{Topology of 4-manifolds}, Princeton
  Mathematical Series, vol.~39, Princeton University Press, Princeton, NJ,
  1990. \MR{1201584}

\bibitem[Hos58]{hoso}
Fujitsugu Hosokawa, \emph{On {$\nabla $}-polynomials of links}, Osaka Math. J.
  \textbf{10} (1958), 273--282. \MR{102820}

\bibitem[Kau87]{Kau}
L.~H. Kauffman, \emph{On knots}, Ann. Math. Studies, vol. 115, Princeton
  University Press, Princeton, NJ, 1987.

\bibitem[Kaw78]{Kaw}
Akio Kawauchi, \emph{On the {A}lexander polynomials of cobordant links}, Osaka
  J. Math. \textbf{15} (1978), no.~1, 151--159.

\bibitem[Kaw96]{KawBook}
\bysame, \emph{A survey of knot theory}, Birkh\"auser, Basel, 1996.

\bibitem[Ker71]{Ker}
Michel~A. Kervaire, \emph{Knot cobordism in codimension two},
  Manifolds--{A}msterdam 1970 ({P}roc. {N}uffic {S}ummer {S}chool), Lecture
  Notes in Mathematics, Vol. 197, Springer, Berlin, 1971, pp.~83--105.
  \MR{0283786}

\bibitem[Lev69a]{Lev1}
J.~Levine, \emph{Invariants of knot cobordism}, Invent. Math. \textbf{8}
  (1969), 98--110; addendum, ibid. 8 (1969), 355. \MR{253348}

\bibitem[Lev69b]{Lev2}
\bysame, \emph{Knot cobordism groups in codimension two}, Comment. Math. Helv.
  \textbf{44} (1969), 229--244. \MR{246314}

\bibitem[Lic97]{Lic}
W.~B.~R. Lickorish, \emph{An introduction to knot theory}, Graduate Texts in
  Math., vol. 175, Springer, New York, 1997.

\bibitem[Liv10]{Liv}
Charles Livingston, \emph{The algebraic concordance order of a knot}, J. Knot
  Theory Ramifications \textbf{19} (2010), no.~12, 1693--1711. \MR{2755496}

\bibitem[LM]{linkinfo}
Charles Livingston and Allison~H. Moore, \emph{Linkinfo: Table of link
  invariants}, URL: \url{linkinfo.math.indiana.edu}.

\bibitem[LN]{LivNaik}
Charles Livingston and Swatee Naik, \emph{Introduction to knot concordance
  (work in progress)}, URL:
  \url{citeseerx.ist.psu.edu/document?doi=07e3225de8e1084a9d6d2d53c54b548cb1a93ac8}.

\bibitem[MH73]{MH}
John Milnor and Dale Husemoller, \emph{Symmetric bilinear forms}, Ergebnisse
  der Mathematik und ihrer Grenzgebiete, vol.~73, Springer-Verlag,
  Berlin-Heidelberg, 1973.

\bibitem[Mil69]{Mil}
John Milnor, \emph{On isometries of inner product spaces}, Invent. Math.
  \textbf{8} (1969), 83--97. \MR{249519}

\bibitem[Mou]{Mou}
Delphine Moussard, \emph{Slice genus, t-genus and 4-dimensional clasp number},
  Comm. Anal. Geom., arXiv: 2101.01553.

\bibitem[Mur65a]{Mu2}
Kunio Murasugi, \emph{The {A}rf invariant for knot types}, Trans. Amer. Math.
  Soc. \textbf{117} (1965), 387--422.

\bibitem[Mur65b]{Mu}
\bysame, \emph{On a certain numerical invariant of link types}, Trans. Amer.
  Math. Soc. \textbf{117} (1965), 387--422. \MR{171275}

\bibitem[NP17]{NP}
Matthias Nagel and Mark Powell, \emph{Concordance invariance of
  {L}evine-{T}ristram signatures of links}, Documenta Math. \textbf{22} (2017),
  25--43.

\bibitem[Rob65]{Rob}
Raymond~A. Robertello, \emph{An invariant of knot cobordism}, Comm. Pure Appl.
  Math. \textbf{18} (1965), 543--555.

\bibitem[Sei36]{Sei}
H.~Seifert, \emph{{\"U}ber das {G}eschlecht von {K}noten}, Math. Ann.
  \textbf{110} (1936), 571--592.

\bibitem[Ser70]{Ser}
Jean-Pierre Serre, \emph{Cours d'arithm\'{e}tique}, Collection SUP: ``Le
  Math\'{e}maticien'', vol.~2, Presses Universitaires de France, Paris, 1970.
  \MR{0255476}

\bibitem[Tay79]{Tay}
Laurence~R. Taylor, \emph{On the genera of knots}, Topology of low-dimensional
  manifolds ({P}roc. {S}econd {S}ussex {C}onf., {C}helwood {G}ate, 1977),
  Lecture Notes in Math., vol. 722, Springer, Berlin, 1979, pp.~144--154.
  \MR{547461}

\bibitem[Tri69]{Tr}
A.~G. Tristram, \emph{Some cobordism invariants for links}, Proc. Cambridge
  Philos. Soc. \textbf{66} (1969), 251--264. \MR{248854}

\end{thebibliography}
 
\end{document}